\documentclass[11pt]{article}

% Packages and macros go here
\usepackage[T1]{fontenc}
\usepackage{lmodern}
\usepackage[utf8]{inputenc}
\usepackage{microtype}
\usepackage{framed}
\usepackage{listings}
\usepackage{vmargin}
\usepackage{setspace}
\usepackage{mathrsfs, mathenv}
\usepackage{amsmath, amsthm, amssymb, amsfonts, amscd}
\usepackage{graphicx}
\usepackage{epstopdf}
\usepackage[svgnames]{xcolor}
\usepackage{hyperref}
\hypersetup{citecolor=blue, colorlinks=true, linkcolor=black}
\usepackage[ruled, vlined]{algorithm2e}
\usepackage[capitalise]{cleveref}
\setlength{\parskip}{6pt}
\setlength\parindent{0pt}
\usepackage{subcaption}
\usepackage{bbm}
\usepackage{cite}
\usepackage{verbatim}
\usepackage{pgfplots}
\usepackage{tikz}
\usetikzlibrary{arrows,decorations.pathmorphing,backgrounds,positioning,fit,matrix}
\usepackage{etoolbox}
\usepackage{color}
\usepackage{lipsum}
\usepackage{ifthen}
\usepackage[title]{appendix}
\usepackage{autonum}
\usepackage{accents}
\usepackage{xpatch}
\usepackage[]{algpseudocode}
\usepackage{multicol}
\usepackage[abs]{overpic}
\usepackage{eqnarray}
\usepackage{bm}

\crefname{assumption}{Assumption}{Assumptions}
\crefname{figure}{Figure}{Figures}
%\crefname{algorithm}{Algorithm}

\theoremstyle{plain}
\newtheorem{theorem}{Theorem}[section]

\newtheorem{proposition}[theorem]{Proposition}
\numberwithin{equation}{section}

\theoremstyle{definition}

\theoremstyle{remark}
\newtheorem{remark}[theorem]{Remark}

\ifpdf
  \DeclareGraphicsExtensions{.eps,.pdf,.png,.jpg}
\else
  \DeclareGraphicsExtensions{.eps}
\fi

\usepackage{mathtools}
%\mathtoolsset{showonlyrefs}

% basics

% tables
\usepackage{booktabs}

% plots
\usepackage{pgfplots}
\usepackage{tikz}
\usetikzlibrary{patterns,arrows,decorations.pathmorphing,backgrounds,positioning,fit,matrix}
\usepackage[labelfont=bf]{caption}
\setlength{\belowcaptionskip}{-5pt}
\usepackage{here}
\usepackage[font=normal]{subcaption}

% Prevent itemized lists from running into the left margin inside theorems and proofs
\usepackage{enumitem}
\setlist[itemize]{leftmargin=.5in}
\setlist[enumerate]{leftmargin=.5in,topsep=3pt,itemsep=3pt,label=(\roman*)}

% Add a serial/Oxford comma by default.

% Sets running headers as well as PDF title and authors
% title and authors
\newcommand*\samethanks[1][\value{footnote}]{\footnotemark[#1]}

\newcommand{\TheTitle}{Neural active manifolds: nonlinear dimensionality reduction for uncertainty quantification}
\newcommand{\TheAuthors}{A. Zanoni, M. Salvador, G. Geraci, A. L. Marsden, D. E. Schiavazzi}
\title{\TheTitle}
\author{
Andrea Zanoni \thanks{Centro di Ricerca Matematica Ennio De Giorgi, Scuola Normale Superiore, Pisa, Italy.} \thanks{Institute for Computational and Mathematical Engineering, Stanford University, Stanford, CA, USA.}
\and Gianluca Geraci \thanks{Center for Computing Research, Sandia National Laboratories, Albuquerque, NM, USA.}
\and Matteo Salvador \samethanks[2] \thanks{Pasteur Labs, Brooklyn, NY, USA}
\and Alison L. Marsden \samethanks[2] \thanks{Pediatric Cardiology, Stanford University, Stanford, CA, USA.} \thanks{Bioengineering, Stanford University, Stanford, CA, USA.}
\and Daniele E. Schiavazzi \thanks{Department of Applied and Computational Mathematics and Statistics, University of Notre Dame, Notre Dame, IN, USA.}
}
\date{}

%Matteo
\newcommand{\Iion}{{\mathcal{I}_{\mathrm{ion}}}}
\newcommand{\Iapp}{{\mathcal{I}_{\mathrm{app}}}}
\newcommand{\DiffTens}{D_{\mathrm{M}}}
\newcommand{\Pot}{u}
\newcommand{\Ionic}{y}
\newcommand{\Di}{D_\mathrm{iso}}
\newcommand{\tboxstim}{\tau_\mathrm{box}^\mathrm{stim}}
\newcommand{\RhsIonic}{F}
\newcommand{\GNa}{G_\mathrm{Na}}
\newcommand{\GCaL}{G_\mathrm{CaL}}
\newcommand{\GKr}{G_\mathrm{Kr}}
\newcommand{\GKs}{G_\mathrm{Ks}}

\newcommand{\Q}{\mathcal Q}
\newcommand{\E}{\mathcal E}
\newcommand{\D}{\mathcal D}
\newcommand{\F}{\mathcal F}
\renewcommand{\S}{\mathcal S}

\newcommand{\LF}{\mathrm{LF}}
\newcommand{\HF}{\mathrm{HF}}

\usepackage{amsopn}

\newcommand{\abs}[1]{\left\lvert#1\right\rvert}
\newcommand{\norm}[1]{\left\|#1\right\|}
\renewcommand{\Pr}{\mathbb{P}}

\newcommand{\R}{\mathbb{R}}

\newcommand{\Var}{\operatorname{\mathbb{V}ar}}
\newcommand{\Cov}{\operatorname{\mathbb{C}ov}}
\newcommand{\Ex}{\operatorname{\mathbb{E}}}

\newcommand{\dd}{\,\mathrm{d}}
\definecolor{shade}{RGB}{100, 100, 100}
\definecolor{bordeaux}{RGB}{128, 0, 50}

\usepackage[usestackEOL]{stackengine}

\definecolor{leg1}{RGB}{0,114,189}
\definecolor{leg2}{RGB}{217,83,25}
\definecolor{leg3}{RGB}{237,177,32}
\definecolor{leg4}{RGB}{126,47,142}
\definecolor{leg5}{RGB}{119,172,48}

\definecolor{leg21}{RGB}{62,38,169}
\definecolor{leg22}{RGB}{46,135,247}
\definecolor{leg23}{RGB}{55,200,151}
\definecolor{leg24}{RGB}{254,195,56}

\ifpdf
\hypersetup{
	pdftitle={\TheTitle},
	pdfauthor={\TheAuthors}
}
\fi

\begin{document}
	
\maketitle	

\begin{abstract} 
\noindent We present a new approach for nonlinear dimensionality reduction, specifically designed for computationally expensive mathematical models. We leverage autoencoders to discover a one-dimensional \emph{neural active manifold} (NeurAM) capturing the model output variability, through the aid of a simultaneously learnt surrogate model with inputs on this manifold. Our method only relies on model evaluations and does not require the knowledge of gradients. The proposed dimensionality reduction framework can then be applied to assist outer loop many-query tasks in scientific computing, like sensitivity analysis and multifidelity uncertainty propagation. In particular, we prove, both theoretically under idealized conditions, and numerically in challenging test cases, how NeurAM can be used to obtain multifidelity sampling estimators with reduced variance by sampling the models on the discovered low-dimensional and shared manifold among models. Several numerical examples illustrate the main features of the proposed dimensionality reduction strategy and highlight its advantages with respect to existing approaches in the literature.
\end{abstract}

\textbf{AMS subject classifications.} 62R30, 65C05, 68T07, 93B35.

\textbf{Keywords.} autoencoders, dimensionality reduction, multifidelity estimators, sensitivity analysis, surrogate modeling, uncertainty quantification.

\section{Introduction}

Mathematical modeling has become an indispensable tool to understand real-world phenomena in physics, engineering, and social sciences~\cite{Pal22,LMN20}. Mathematical models depend on parameters that might be unknown and/or uncertain. As the computational models become more complex, quantification of predictive uncertainty plays an increasingly important role to fully assess the validity and accuracy of the results they provide.
However, standard techniques for uncertainty quantification (UQ), such as Monte Carlo methods, require a large number of samples to reliably estimate the statistical moments (e.g., mean and variance) of the stochastic response, and reaching high precision can be unfeasible if the model is computationally expensive.
Moreover, if the number of input parameters of the model increases, most methods suffer from the curse of dimensionality, meaning that the complexity of the estimation task grows exponentially with the dimensionality, making the uncertainty quantification problem intractable~\cite{GBC16}. In uncertainty quantification, as in any other many-query workflows such as optimization, global sensitivity analysis, and design, dimensionality reduction techniques become crucial.
Dimensionality reduction offers a powerful approach to alleviate the computational cost of workflows that need to query the model numerous times, e.g., uncertainty quantification, by reducing the number of model parameters while preserving the main output features. 
Moreover, some mathematical models can be overparameterized, and reducing the dimensionality can help to capture the most relevant information, while disregarding redundant variables or unimportant combinations thereof.

Overviews of dimensionality reduction strategies can be found in, e.g.,~\cite{Bur10,KLG22}. 
We mention, in particular, two recent approaches related to sliced inverse regression~\cite{DuL91,Li_91} that motivated our work. 
Active subspaces (AS) was first proposed to determine linear low-dimensional subspaces of maximum model variance in~\cite{Rus10}, and then thoroughly analyzed in~\cite{CDW14,Con15}. 
The \emph{active} directions of a model consist of the eigenvectors corresponding to the largest eigenvalues from the expected gradient covariance matrix.
AS have been demonstrated in numerous contexts and applications including Bayesian inverse problems~\cite{CKB16}, sensitivity analysis~\cite{CoD17}, and multifidelity dimensionality reduction~\cite{LZM20}. 
The main limitation of AS, inherited from its linear character, is that it only captures the model variation on average, and, therefore, the existence of a lower dimensional manifold cannot be guaranteed~\cite{BGF19}.
A second approach called active manifolds (AM)~\cite{BGF19} overcomes some of the limitations of AS. This approach is based on a local construction of level sets which approximates any continuously differentiable multi-dimensional function using a surrogate defined over a one-dimensional manifold, obtained through a nonlinear transformation.
The main advantages of AM over AS are that they always provide a one-dimensional reduced manifold, give a more accurate surrogate, and enable more informative sensitivity analysis.
Unfortunately, as discussed in~\cite{BGF19}, the geometric nature of the AM algorithm requires that the function's level sets are fully contained in the original domain of the function. Indeed, the hypothesis of AM is that any arbitrary multi-dimensional $\mathcal C^1$ function can be related to a one-dimensional $\mathcal C^1$ counterpart where each point of the function in the original space is mapped to a point on the one-dimensional AM moving tangentially to the corresponding level set. As a consequence, if a point lies on a level set that leaves the domain, it is impossible to map it to the AM. 
Additionally, both AS and AM methods require the computation of the model output gradient, which is often unavailable for complex scientific codes, or computationally expensive to approximate for high-dimensional problems. We note that alternative approaches like the so-called Adaptive Basis~\cite{TiG14,ZRG21} are able to overcome the need for gradients by relying on the construction of a first order polynomial chaos surrogate~\cite{LeK10}. However, since this approach is still limited to the identification of a linear manifold, in the rest of the paper we only consider AS as representative of both methods. 

In this work we propose a novel methodology inspired by AM, that does not rely on the gradient of the model, and discovers a transformation from the original input space to a one-dimensional nonlinear manifold without being restricted by level sets.
We call this methodology \emph{neural active manifolds} (NeurAM). 
Our approach is based on autoencoders~\cite{Bal12}, a well established approach for unsupervised dimensionality reduction with neural networks. 
The architecture consists of an encoder, which maps the input into a lower-dimensional latent space, and a decoder, which maps points from the latent space back to the original input space.
Classic autoencoders for usupervised tasks have been used in~\cite{BoF21} to determined active manifolds that are then employed to accelerate multidisciplinary analysis and optimization. This approach relies on a snapshot matrix built using an auxiliary cheaper model.
However, unlike unsupervised learning, where autoencoders are used to minimize the reconstruction error, we are interested in determining a one-dimensional manifold in the space of parameters capturing the entire variability of the model output.
To do so, we design a computational pipeline where a surrogate model with inputs on a one-dimensional latent space is leveraged by the autoencoder to identify the manifold on which the representation of the function can be obtained with minimal error. This surrogate, which is parameterized as a dense neural network, is trained at the same time as the autoencoder's parameters, defining the encoder and decoder, and in principle does not need to be strongly accurate because it is only useful to help in the construction of the manifold.

The resulting one-dimensional NeurAM can then be employed to support for many-query tasks, like sensitivity analysis and uncertainty propagation. 
In particular, we first show how to perform sensitivity analysis along the manifold and derive both local and global indices for the input parameters with respect to a scalar output. 
Our indices, in contrast to more classical indices, such as Sobol'~\cite{Sob90} and Borgonovo~\cite{PBS13}, are defined along a one-dimensional manifold.

Then, we focus on the problem of uncertainty propagation, which can easily become unfeasible for computationally expensive models. 
In order to mitigate this issue, multifidelity estimators~\cite{NgW14,PWG16,GGE20,BLW22} rely on cheaper-to-evaluate, but correlated, low-fidelity models to achieve variance reduction. The statistical correlation among models is directly related to the attainable variance reduction, which corresponds to a greater estimator precision when compared to a single fidelity Monte Carlo estimator with the same computational cost.
Therefore, for poorly correlated model pairs such as those characterized by a dissimilar number of input parameters, methodologies to increase the correlation between the fidelities have been recently proposed, and are based on approaches for linear dimensionality reduction such as AS~\cite{GEG18} and adaptive basis~\cite{ZGE23}.
A first step towards the extension to nonlinear dimensionality reduction using autoencoders is discussed in~\cite{ZGS24,ZGS24b}. Following this line of work, we leverage NeurAM to enhance the performance of sampling multifidelity estimators, complementing numerical test cases showing variance reduction of the resulting estimators with a theoretical analysis under simplified assumptions. 
In particular, we demonstrate that, under such assumptions, NeurAM provides new sampling locations, in the original models' inputs, that correspond to shared samples in the low-dimensional manifold leading to a bi-fidelity correlation that is never smaller than the one produced by the original samples. Importantly, we note that the approach proposed here could be directly leveraged in the context of the multifidelity construction of surrogates, e.g., following recent work discussed in~\cite{ZGG23,ZGG25}. For simplicity of exposure, in this manuscript, we limit the use of NeurAM to the case of sampling estimators and we leave its integration with surrogate techniques to a future study. 

The main contributions of this work include:
\begin{itemize}[leftmargin=*,itemsep=0pt,topsep=0pt]
\item introducing an algorithm to learn a one-dimensional active manifold aided by a simultaneously trained surrogate model;
\item proposing a way to leverage NeurAM for sensitivity analysis; 
\item showing how to use nonlinear dimensionality reduction to improve the correlation between low- and high-fidelity models, and consequently to obtain multifidelity sampling estimators of reduced variance;
\item providing a theoretical result under specific assumptions, showing that the correlation obtained from NeurAM is never smaller than the initial correlation.
\end{itemize}

The rest of the paper is organized as follows. In \cref{sec:method} we introduce NeurAM, and present possible applications of our method in the field of uncertainty quantification in~\cref{sec:applications}. 
We then discuss sensitivity analysis in \cref{sec:sensitivity_analysis} and multifidelity uncertainty propagation in \cref{sec:uncertainty_propagation}. 
Then, in \cref{sec:examples} we demonstrate the proposed approach on  several numerical examples of increasing complexity.
Finally, conclusions and possible future research directions are presented in \cref{sec:conclusion}.

\section{Methodology}\label{sec:method}

In this section we present our methodology to reduce the dimensionality of a computationally expensive model $\Q \colon \R^d \to \R$ with $d>1$, which is the primary objective of this work. Let us assume that we are given a set of $N$ realizations $\{(x_n, \Q(x_n))\}_{n=1}^N$, where the samples $\{ x_n \}_{n=1}^N$ are drawn from an input distribution $\mu$. 
Using the notation of the autoencoders, consider an encoder $\E \colon \R^d \to \R$ to reduce the dimensionality, and a decoder $\D \colon \R \to \R^d$ to re-map a given sample to the original input space. 
A fundamental property of NeurAM is that it aims to represent the entire variability of the model $\Q$, therefore the encoder and the decoder need to satisfy $\Q(x) \simeq \Q(\D(\E(x)))$; see also~\cite{ZGS24}. 
This implies that $\E$ and $\D$ can be obtained by minimizing the loss function
\begin{equation} \label{eq:loss_AE_infeasible}
\ell (\E,\D) = \Ex^\mu \left[ (\Q(X) - \Q(\D(\E(X))))^2 \right],
\end{equation}
where the superscript denotes the fact that the expectation is computed with respect to the measure $\mu$, i.e., $X \sim \mu$. 
\begin{remark}
Even if we employ the same structure of an autoencoder, our method does not behave like an autoencoder, in the sense that we do not aim to reconstruct the original input. In particular, notice that, in the loss function \eqref{eq:loss_AE_infeasible}, we enforce the fact that the evaluation of the model in the reconstructed input is close to the evaluation of the model in the original input, i.e., $\Q(x) \simeq \Q(\D(\E(x)))$, and not necessarily that the reconstructed point is close to the original point, i.e., $x \not\simeq \D(\E(x))$ in general.
\end{remark}
In order to solve the minimization problem, the encoder and the decoder need to be parameterized as $\E(\cdot; \alpha)$ and $\D(\cdot; \beta)$, and the expectation is approximated by its Monte Carlo approximation, yielding the discrete optimization problem
\begin{equation}\label{eq:loss_AE_optim}
\widetilde \ell(\alpha, \beta) = \frac1N \sum_{n=1}^N  (\Q(x_n) - \Q(\D(\E(x_n; \alpha); \beta)))^2.
\end{equation}
In practice, solving \eqref{eq:loss_AE_optim} without incurring additional model evaluations is not feasible. 
In fact, any optimization algorithm, e.g., (stochastic) gradient descent, would require a \emph{new} evaluation of the model $\Q$ at the points $\{ \D(\E(x_n; \alpha); \beta) \}_{n=1}^N$, which are not in the original dataset $\{ x_n \}_{n=1}^N$, at each iteration.

To overcome this limitation, we propose to train a surrogate model $\S \colon \R \to \R$ on the latent space, and learn it simultaneously with the nonlinear transformation from the original points in $\mathbb{R}^d$ to the corresponding points in $\mathbb{R}$ on the neural active manifold. 
We generalize the loss function in equation \eqref{eq:loss_AE_infeasible} to account for the surrogate contribution 
\begin{equation} \label{eq:loss}
\begin{aligned}
\mathcal L(\E,\D,\S) &= \Ex^\mu \left[ (\Q(X) - \S(\E(\D(\E(X)))))^2 \right] \\
&\quad+ \Ex^\mu \left[ (\Q(X) - \S(\E(X)))^2 \right] \\
&\quad+ \Ex^\mu \left[ (\D(\E(X)) - \D(\E(\D(\E(X)))))^2 \right], \\
\end{aligned}
\end{equation}
where the first term corresponds to the one in the original loss $\ell$ with the surrogate $\S$ defined on the latent space replacing the full model, the second term refers to the error of the surrogate model, and the third term enforces the fact that a point on the NeurAM ($\widetilde x$) must be mapped in the point itself ($\widetilde{\widetilde x}$), i.e., the second part of the network in \cref{fig:network}, where a schematic representation of the approach is provided, acts as a classical autoencoder. 
In particular, the three terms in the loss \eqref{eq:loss} are all fundamental since they enforce different properties to the network, and we emphasize again that we do not aim to reconstruct all the original inputs, i.e., $\D(\E(x)) \not\simeq x$ in general. However, if we consider only the points in the reduced manifold, then the couple encoder-decoder must behave as a standard autoencoder.

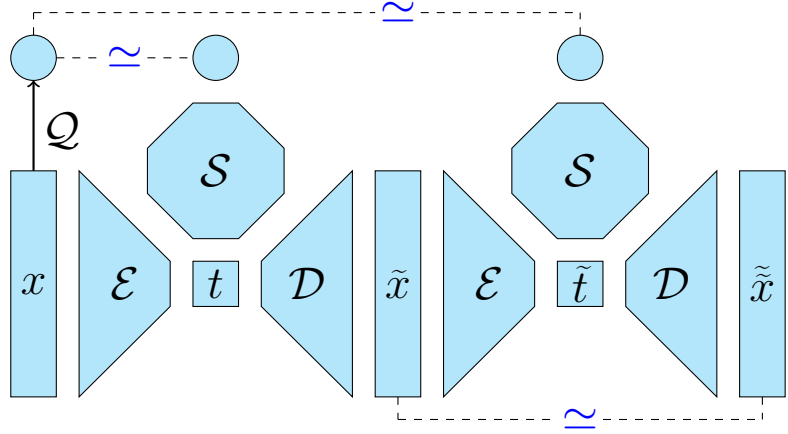
\begin{figure}[ht!]
\centering
\begin{tikzpicture}[scale=.6]
\draw[fill=cyan, fill opacity=.3] (0,0) rectangle (1,5);
\draw[fill=cyan, fill opacity=.3] (1.5,0) -- (3.5,2) -- (3.5,3) -- (1.5,5) -- cycle;
\draw[fill=cyan, fill opacity=.3] (4,2) rectangle (5,3);
\draw[fill=cyan, fill opacity=.3] (7.5,0) -- (5.5,2) -- (5.5,3) -- (7.5,5) -- cycle;
\draw[fill=cyan, fill opacity=.3] (8,0) rectangle (9,5);
\draw[fill=cyan, fill opacity=.3] (0.5,7.5) circle (0.5);
\draw[fill=cyan, fill opacity=.3] (4.5,7.5) circle (0.5);
\draw [->, thick] (0.5,5) -- (0.5,7);
\draw[fill=cyan, fill opacity=.3] (4,3.5) -- (5,3.5) -- (6,4.5) -- (6,5.5) -- (5,6.5) -- (4,6.5) -- (3,5.5) -- (3,4.5) -- cycle;
\node at (0.5,2.5) {\LARGE $x$};
\node at (4.5,2.5) {\LARGE $t$};
\node at (8.5,2.5) {\LARGE $\widetilde x$};
\node at (1.1,5.9) {\LARGE $\mathcal Q$};
\node at (2.5,2.5) {\LARGE $\mathcal E$};
\node at (6.5,2.5) {\LARGE $\mathcal D$};
\node at (4.5,5) {\LARGE $\mathcal S$};
\draw[fill=cyan, fill opacity=.3] (9.5,0) -- (11.5,2) -- (11.5,3) -- (9.5,5) -- cycle;
\draw[fill=cyan, fill opacity=.3] (12,2) rectangle (13,3);
\draw[fill=cyan, fill opacity=.3] (15.5,0) -- (13.5,2) -- (13.5,3) -- (15.5,5) -- cycle;
\draw[fill=cyan, fill opacity=.3] (16,0) rectangle (17,5);
\draw[fill=cyan, fill opacity=.3] (12.5,7.5) circle (0.5);
\draw[fill=cyan, fill opacity=.3] (12,3.5) -- (13,3.5) -- (14,4.5) -- (14,5.5) -- (13,6.5) -- (12,6.5) -- (11,5.5) -- (11,4.5) -- cycle;
\node at (10.5,2.5) {\LARGE $\mathcal E$};
\node at (14.5,2.5) {\LARGE $\mathcal D$};
\node at (12.5,5) {\LARGE $\mathcal S$};
\node at (12.5,2.5) {\LARGE $\widetilde t$};
\node at (16.5,2.5) {\LARGE $\widetilde{\widetilde x}$};
\draw [dashed] (1,7.5) -- (2,7.5);
\draw [dashed] (3,7.5) -- (4,7.5);
\node at (2.5,7.5) {\LARGE \textcolor{blue}{$\simeq$}};
\draw [dashed] (0.5,8) -- (0.5,8.5);
\draw [dashed] (12.5,8) -- (12.5,8.5);
\draw [dashed] (0.5,8.5) -- (8,8.5);
\draw [dashed] (9,8.5) -- (12.5,8.5);
\node at (8.5,8.5) {\LARGE \textcolor{blue}{$\simeq$}};
\draw [dashed] (8.5,0) -- (8.5,-0.5);
\draw [dashed] (16.5,0) -- (16.5,-0.5);
\draw [dashed] (8.5,-0.5) -- (12,-0.5);
\draw [dashed] (13,-0.5) -- (16.5,-0.5);
\node at (12.5,-0.5) {\LARGE \textcolor{blue}{$\simeq$}};
\end{tikzpicture}
\caption{Schematic representation of the NeurAM architecture, where the symbols $\simeq$ are used to indicate the three terms in the loss function in equation \eqref{eq:loss}.}
\label{fig:network}
\end{figure}

We notice that the NeurAM is parameterized by the decoder $\D(t)$ for all $t$ in the interval
\begin{equation} \label{eq:domain_manifold}
\mathcal T = \left[ \min_{x\in\R^d} \E(x), \max_{x\in\R^d} \E(x) \right],
\end{equation} 
and the one-dimensional analogue of the model $\Q$ on the manifold is given by $\S$. Moreover, this methodology provides a surrogate model $\Q_{\mathrm S}$ for $\Q$ defined as
\begin{equation} \label{eq:surrogateQ}
\Q_{\mathrm S}(x) = \S(\E(x)).
\end{equation}
Note that $\Q_{\mathrm S}$ is introduced primarily to assist in training the autoencoder by effectively providing a supervised loss term. Thus, it serves as a means to identify an active manifold that is capable to capture the function's input-output map, rather than being the objective of the analysis itself. In fact, the main focus of this work is not the construction of a surrogate model for $\Q$, but rather the identification of a one-dimensional nonlinear manifold that captures the dominant variability of the model response.
In other words, the primary goal of the proposed approach is dimensionality reduction, not surrogate modeling. From this perspective, our approach is aligned with prior works such as~\cite{CDW14,BGF19}, which introduce active subspaces and active manifolds, respectively, as tools for identifying reduced representations of high-dimensional models. 
Nevertheless, once the manifold is identified, $\Q_\mathrm{S}$ can of course be used as a standalone model for, e.g., fast emulation and sensitivity analysis, assuming enough data is available to generate an accurate approximation. 
This can be checked by monitoring the second term in the loss function~\eqref{eq:loss}.

The following result shows that the minimization problem with loss function \eqref{eq:loss} admits a solution.

\begin{proposition} \label{pro:existence_minimizer}
The loss function $\mathcal L$ defined in \eqref{eq:loss} has at least one global minimizer.
\end{proposition}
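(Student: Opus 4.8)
The plan is to show that the infimum of $\mathcal{L}$ equals $0$ and is attained, by exhibiting an explicit minimizing triple rather than appealing to any compactness argument. First I would observe that, since each of the three terms in \eqref{eq:loss} is the expectation of a squared (hence nonnegative) quantity, we have $\mathcal{L}(\E,\D,\S) \geq 0$ for every admissible triple $(\E,\D,\S)$, with the value possibly being $+\infty$. Consequently it suffices to construct a single triple for which all three terms vanish simultaneously: such a triple automatically realizes the infimum and is therefore a global minimizer.

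The construction I propose is to let the surrogate be the identity, $\S(t) = t$, let the encoder reproduce the model, $\E = \Q$, and let the decoder $\D \colon \R \to \R^d$ be any right inverse of $\Q$, i.e.\ a map assigning to each value $t$ in the range of $\Q$ a preimage $\D(t)$ with $\Q(\D(t)) = t$ (defined arbitrarily elsewhere). With these choices the second term of \eqref{eq:loss} vanishes identically, since $\S(\E(X)) = \Q(X)$. The key algebraic identity is $\Q(\D(\Q(x))) = \Q(x)$, which is exactly the section property evaluated at $t = \Q(x)$. This identity makes the first term vanish, because $\S(\E(\D(\E(X)))) = \Q(\D(\Q(X))) = \Q(X)$; moreover it yields $\E(\D(\E(X))) = \E(X)$, whence $\D(\E(\D(\E(X)))) = \D(\E(X))$ and the third term vanishes as well. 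Thus $\mathcal{L}(\Q,\D,\mathrm{id}) = 0$, which proves the claim and shows in passing that $\min \mathcal{L} = 0$.

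The main point requiring care is the construction and admissibility of the decoder $\D$, that is, the existence of a section of $\Q$. Note that along the relevant compositions $\D$ is only ever evaluated at $\E(X) = \Q(X)$, hence only on the range of $\Q$, so the section property is all that is used. If the admissible class consists of arbitrary (measurable) maps, such a $\D$ always exists, since one simply selects one preimage for each attained value. If instead regularity of $\D$ is required, I would impose mild hypotheses on $\Q$ (for instance continuity together with attainment of each value in an interval range) under which a continuous selection of preimages exists, e.g.\ by letting $\D$ trace a curve crossing the level sets of $\Q$. I expect this selection/regularity issue to be the only genuine obstacle; once the section property is in hand, the verification that the three terms vanish is a direct substitution.
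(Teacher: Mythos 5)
Your proposal is correct and matches the paper's own proof essentially verbatim: both take $\S = \mathcal{I}$, $\E = \Q$, and $\D$ a right inverse of $\Q$, then verify that all three nonnegative terms of $\mathcal{L}$ vanish, so the infimum $0$ is attained. Your closing discussion of the existence of the section corresponds to the paper's remark that a right inverse exists once the codomain of $\Q$ is restricted to its range, so there is no substantive difference between the two arguments.
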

\begin{proof}
First, notice that $\mathcal L \ge 0$. Then, define
\begin{equation}
\S^* = \mathcal I \qquad \text{and} \qquad \E^* = \Q,
\end{equation}
and set $\D^*$ to be a right inverse of $\Q$ such that
\begin{equation}
\Q \circ \D^* = \mathcal I.
\end{equation}
We remark that the right inverse exists if and only if the model $\Q$ is surjective, but it is always possible to restrict the codomain of $\Q$ in order to make it surjective, since it maps a $d$-dimensional domain into a one-dimensional one. We now notice that $\mathcal L(\E^*,\D^*,\S^*) = 0$ independently of the measure $\mu$, and therefore $(\E^*,\D^*,\S^*)$ is a global minimizer of $\mathcal L$.
\end{proof}

\begin{remark}
\cref{pro:existence_minimizer} gives the existence of a global minimizer of $\mathcal L$, but does not guarantee its uniqueness. In particular, it is possible to show that $\mathcal L$ has multiple global minimizers. First, from the proof of \cref{pro:existence_minimizer} notice that the choice of the right inverse $\D^*$ is not unique, since $\mathcal Q$ is not injective. Moreover, a family of minimizers could be obtained by setting
\begin{equation}
\mathcal S^* = \frac1{a} \mathcal I, \qquad \E^* = a\Q, \qquad (a\Q) \circ \mathcal D^* = \mathcal I,
\end{equation}
for a parameter $a\in\R$, $a \neq 0$.
\end{remark}

The solution provided by the proof of \cref{pro:existence_minimizer} is not useful from a practical point of view, as it relies on the model $\mathcal Q$, which is computationally expensive, and requires the computation of its right inverse. Therefore, we consider the parameterized encoder $\E(\cdot; \alpha)$, decoder $\D(\cdot; \beta)$, and surrogate model $\S(\cdot; \gamma)$, which are represented by neural networks, and compute their optimal parameters by minimizing the approximated loss function
\begin{equation} \label{eq:loss_discrete}
\begin{aligned}
\widetilde{\mathcal L}(\alpha,\beta,\gamma) &= \frac1N \sum_{n=1}^N (\Q(x_n) - \S(\E(\D(\E(x_n; \alpha); \beta); \alpha); \gamma))^2 \\
&\quad+ \frac1N \sum_{n=1}^N (\Q(x_n) - \S(\E(x_n; \alpha); \gamma))^2 \\
&\quad+ \frac1N \sum_{n=1}^N (\D(\E(x_n; \alpha); \beta) - \D(\E(\D(\E(x_n; \alpha); \beta); \alpha); \beta))^2.
\end{aligned}
\end{equation}
\begin{remark}
Since the surrogate model $\S$ defined on the latent space is a one-dimensional function, several methods can be used to approximate it, such as polynomial interpolation. In this work, for consistency, we decided to focus on neural networks because $\S$ is part of a larger structure which is already based on neural networks.
\end{remark}
In \cref{alg:important_manifolds} we summarize the main steps to construct a neural active manifold. The closeness of the solution of the optimization problem to the global minimizer given in \cref{pro:existence_minimizer} is dependent on the complexity of the model $\Q$, the expressive power of encoder and decoder, and the number $N$ of samples. Nevertheless, even if we cannot achieve a loss function equal to zero in practice, in \cref{sec:examples} we show that the NeurAM determined by this procedure provides a valid representation of the model $\Q$ in one dimension.

\begin{algorithm}
\caption{NeurAM} \label{alg:important_manifolds}
\begin{tabbing}
\hspace{-0.25cm}\textbf{Input:} \= Model $\Q$  \\
\> Input distribution $\mu$ \\
\> Number of samples $N$
\end{tabbing}
\begin{tabbing}
\hspace{-0.25cm}\textbf{Output:} \= Neural active manifold $\mathcal D(t; \beta^*)$ for $t$ in $\mathcal T$ \\
\> Encoder $\E(\cdot;\alpha^*)$ to obtain the latent space \\
\> Surrogate model $\S(\cdot;\gamma^*)$ on the latent space \\
\> Surrogate model $\Q_{\mathrm S}$ of $\Q$
\end{tabbing}
\begin{enumerate}[label=\arabic*:,itemindent=-0.75cm]
\item Parameterize the encoder through a neural network $\E(\cdot;\alpha)$.
\item Parameterize the decoder through a neural network $\D(\cdot;\beta)$.
\item Parameterize the surrogate model through a neural network $\S(\cdot;\gamma)$.
\item Draw the samples and evaluate the model to get $\{ (x_n, \Q(x_n) \}_{n=1}^N$ with $x_n \sim \mu$.
\item Define the loss function $\widetilde{\mathcal L}(\alpha,\beta,\gamma)$ given in equation \eqref{eq:loss_discrete}.
\item Compute the optimal parameters $(\alpha^*,\beta^*,\gamma^*) = \arg\min \widetilde{\mathcal L}(\alpha,\beta,\gamma)$.
\item Compute the domain of the neural active manifold $\mathcal T$ from equation \eqref{eq:domain_manifold}.
\item Define the surrogate model $\Q_{\mathrm S}$ using equation \eqref{eq:surrogateQ}.
\end{enumerate}
\end{algorithm}

\begin{remark}
Even if learning the NeurAM can be computationally expensive depending on the complexity of the neural networks representing encoder $\E(\cdot; \alpha)$, decoder $\D(\cdot; \beta)$, and surrogate model $\S(\cdot; \gamma)$, the  main advantages with respect to other techniques in the literature, such as active subspaces \cite{CDW14} or active manifolds \cite{BGF19}, are that the evaluation of the gradient of the model $\nabla \Q$ is never required and that we can identify a nonlinear manifold described by an arbitrary transformation from the original space, regardless of the function level sets. Moreover, we also train a surrogate model $\Q_{\mathrm S}$ without any additional cost.
\end{remark}

\begin{remark}
If the model $\Q$ is not defined on the whole space $\R^d$, but only on a domain $\Omega \subset \R^d$, then we need to enforce the NeurAM to lie on $\Omega$. Hence, we need to impose the codomain of the decoder $\D$ to be included in $\Omega$. This constraint can be easily enforced from the practical viewpoint if the domain is a box of the form $\Omega = \prod_{i=1}^d [a_i, b_i]$ with $a_i < b_i$ for all $i = 1, \dots, d$, as it is usually the case in applications. Otherwise, if the input parameters are correlated, one would need an invertible map $h$ from the domain $\Omega$ to a simpler box or to the whole space to decorrelate the variables. We leave the problem of determining such a map, which could be done using, e.g., normalizing flows \cite{KPB21}, for future work.
\end{remark}

\section{Application to uncertainty quantification} \label{sec:applications}

In this section we discuss possible applications of NeurAM in the field of uncertainty quantification. In addition to providing a surrogate model, our methodology can be used for sensitivity analysis. Moreover, the one-dimensional latent space can be employed to enhance the performance of multifidelity estimators in the context of uncertainty propagation.

\subsection{Sensitivity analysis} \label{sec:sensitivity_analysis}

Similar to AM~\cite{BGF19}, NeurAM can be used to perform a more informative sensitivity analysis than AS~\cite{CDW14}. Assume to have a sufficiently large number of data such that the NeurAM has been correctly identified and the surrogate model $\Q_{\mathrm S}$ is accurate. Then, by computing the derivatives of the surrogate model $\Q_{\mathrm S}$ with respect to the input parameters along the NeurAM, we can perform local sensitivity analysis, and quantify how the relevance, and consequently the identifiability, of each parameter varies along the manifold, hence providing a dynamic ranking. In particular, consider the quantities $\lambda_i$ for all $i = 1, \dots, d$ defined as
\begin{equation}
\lambda_i(x) = \frac1{\norm{\nabla \Q_{\mathrm S}(x)}^2} \abs{\frac{\partial \Q_{\mathrm S}}{\partial x_i}(x)}^2,
\end{equation}
and notice that they represent how much a single input parameter contributes to the variation of the model. Therefore, by evaluating the quantities $\lambda_i$ along the NeurAM, i.e., replacing $x = \D(t)$, we obtain a measure $\theta_i$ of importance of each parameter that varies along the manifold, and which is given by
\begin{equation} \label{eq:local_SA}
\theta_i(t) = \frac1{\norm{\nabla \Q_{\mathrm S}(\D(t))}^2} \abs{\frac{\partial \Q_{\mathrm S}}{\partial x_i}(\D(t))}^2, \qquad t \in \mathcal T,
\end{equation}
where, by definition \eqref{eq:surrogateQ} and using the chain rule, we have
\begin{equation}
\frac{\partial \Q_{\mathrm S}}{\partial x_i}(\D(t)) = \S'(\E(\D(t))) \frac{\partial \E}{\partial x_i}(\D(t)),
\end{equation}
with $\S'$ being the derivative of the surrogate $\S$ with respect to the latent variable $t$. Note that, from the practical point of view, the derivatives can be computed using automatic differentiation. We also remark that the local indices sum up to one at all points in the manifold, i.e., for all $t \in \mathcal T$
\begin{equation} \label{eq:local_sum1}
\sum_{i=1}^d \theta_i(t) = 1.
\end{equation}
Moreover, letting $\Gamma$ be the NeurAM, we can define global indices by integrating the dynamic indices along the manifold as
\begin{equation} 
\Theta_i = \frac1{\abs{\Gamma}} \int_\Gamma \lambda_i \dd \Gamma,
\end{equation}
for all $i = 1, \dots, d$, which give an overall ranking of the importance of the input parameters. The line integrals can then be computed from the parameterization $\D$ of the manifold $\Gamma$ yielding the indices
\begin{equation} \label{eq:global_SA}
\Theta_i = \frac{\int_{\mathcal T} \theta_i(t) \abs{\D'(t)} \dd t}{\int_{\mathcal T} \abs{\D'(t)} \dd t},
\end{equation}
where $\D'$ stands for the derivative of the decoder $\D$ with respect to the latent variable $t$, and which, due to the equality \eqref{eq:local_sum1}, satisfy
\begin{equation} 
\sum_{i=1}^d \Theta_i = 1.
\end{equation}
These indices, depending on whether the input parameters are normalized or not, might be sensitive to units. In fact, even if the variable $t \in \mathcal T$ describing the manifold is common for all the parameters, the components of the output of the decoder $\D$ have the same units as the inputs parameters of the model $\Q$. It is therefore always important to first normalize the dataset. We note that in this approach for sensitivity analysis based on a one-dimensional reduction, both the local and global indices give a ranking for the relevance of the input parameters without taking into account their interactions. Nevertheless, if one is interested in extracting more information regarding the interaction between variables, classic sensitivity indices, e.g., Sobol', can also be computed at a minimal computational cost by leveraging the surrogate model $\Q_{\mathrm S}$ provided by the NeurAM algorithm.

We finally remark that, even if we suggest to use the surrogate model $\Q_{\mathrm S}$ for sensitivity analysis, we do not always expect $\Q_{\mathrm S}$ to be a perfect approximation of the original model $\Q$. This surrogate is indeed built with the main purpose of helping to determine the NeurAM. Nevertheless, even if the error between $\Q$ and $\Q_{\mathrm S}$ is not negligible, the surrogate model can still be useful for discovering an active manifold, and identifying the most relevant input parameters through sensitivity analysis.

\subsection{Multifidelity uncertainty propagation} \label{sec:uncertainty_propagation}

Inspired by the works \cite{ZGS24,ZGS24b}, neural active manifolds can be used to increase the correlation between high-fidelity and low-fidelity models in the framework of multifidelity uncertainty propagation. For sake of generality, we limit our analysis to multifidelity Monte Carlo estimators~\cite{NgW14} with a single low-fidelity model. Nevertheless, we remark that a straightforward extension to other estimators, e.g., Approximate Control Variate~\cite{GGE20} and multiple low-fidelity sources, is also possible. We assume a computationally expensive high-fidelity model $\Q_\HF \colon \R^d \to \R$ and a cheaper low-fidelity model $\Q_\LF \colon \R^d \to \R$, and consider the problem of estimating the quantity
\begin{equation}
q = \Ex^\mu[\Q_\HF(X)],
\end{equation}
for some probability distribution $\mu$ on $\R^d$. We want to employ the multifidelity Monte Carlo estimator introduced in \cite{NgW14}. Let $w = \mathcal C_\LF/\mathcal C_\HF$ be the cost ratio between the two fidelities, and let $\mathcal B$ be the available computational budget given in terms of evaluations of the high-fidelity model, i.e.,
\begin{equation}
\mathcal B = N_\HF + wN_\LF,
\end{equation}
where $N_\HF$ and $N_\LF$ are the number of high-fidelity and low-fidelity evaluations, respectively. By solving an optimal allocation problem, it is possible to split the total computational budget between the high-fidelity and the low-fidelity models in such a way that the variance of the resulting multifidelity Monte Carlo estimator is minimized \cite{PWG16}. In particular, the solution of the allocation problem is given by
\begin{equation} \label{eq:optimal_allocation}
N_\HF = \frac{\mathcal B}{1 + w \gamma} \qquad \text{and} \qquad N_\LF = \gamma N_\HF = \frac{\gamma \mathcal B}{1 + w \gamma}, \qquad \text{with} \qquad \gamma = \sqrt{\frac{\rho^2}{w(1 - \rho^2)}},
\end{equation}
where $\rho$ is the Pearson correlation coefficient between the high-fidelity and low-fidelity models 
\begin{equation} \label{eq:correlation}
\rho = \frac{\Cov^\mu \left( \Q_\HF(X), \Q_\LF(X) \right)}{\sqrt{\Var^\mu[\Q_\HF(X)] \Var^\mu \left [\Q_\LF(X) \right]}}.
\end{equation}
After fixing the number of evaluations $N_\HF$ and $N_\LF$, we are now ready to define the multifidelity Monte Carlo estimator
\begin{equation} \label{eq:MFMC}
\widehat q = \frac1{N_\HF} \sum_{n=1}^{N_\HF} \Q_\HF(x_n) - \beta \left( \frac1{N_\HF} \sum_{n=1}^{N_\HF} \Q_\LF(x_n) - \frac1{N_\LF} \sum_{n=1}^{N_\LF} \Q_\LF(x_n) \right),
\end{equation}
where the coefficient $\beta$ is given by
\begin{equation} \label{eq:optimal_coefficient}
\beta = \frac{\Cov^\mu \left( \Q_\HF(X), \Q_\LF(X) \right)}{\Var^\mu \left[ \Q_\LF(X) \right]},
\end{equation}
and the samples $\{ x_n \}_{n=1}^{N_\LF}$ are drawn from the distribution $\mu$. We remark that the estimator $\widehat q$ is unbiased, i.e., $\Ex^\mu[\widehat q] = q$, and that the correlation $\rho$ in \eqref{eq:correlation} and the coefficient $\beta$ in \eqref{eq:optimal_coefficient} can be estimated from a pilot sample $\{ x_n \}_{n=1}^N$ with $N < N_\LF$. The performance of the multifidelity Monte Carlo estimator, i.e., its variance, is strongly dependent on the correlation $\rho$. In particular, we have
\begin{equation} \label{eq:variance_MFMC}
\Var^\mu \left[ \widehat q \right] = \frac1{\mathcal B} \Var^\mu \left[ \Q_\HF(X) \right] \left( \sqrt{1 - \rho^2} + \sqrt{w\rho^2} \right)^2,
\end{equation}
which is a decreasing function in $\rho^2$ as long as $\rho^2 > w/(1+w)$. Therefore, for a fixed computational budget, the larger the correlation is in modulus, the smaller the variance is, and consequently the estimation is more precise. 

We now adopt the methodology introduced in \cite{ZGS24} based on nonlinear dimensionality reduction to increase the correlation between the fidelities, and therefore decrease the variance of the estimator. The main limitation in \cite{ZGS24} is the need for a surrogate model to discover the lower-dimensional manifold. We overcome this restriction by using neural active manifolds, which do not rely on any surrogate model, but build a surrogate model on the latent space simultaneously with the learning of the lower-dimensional manifold. Moreover, in \cite{ZGS24} a shared space between the high-fidelity and low-fidelity models is built employing normalizing flows, which, however, may suffer from poor performance in presence of limited available data. Since our NeurAM is one-dimensional, we replace normalizing flows by the inverse transform method, and consider the uniform distribution $\mathcal U([0,1])$ as the reference distribution in the shared space. 

We proceed as follows. We first compute the NeurAM for both the high-fidelity and low-fidelity models by minimizing the loss function \eqref{eq:loss_discrete}, and thus obtaining 
\begin{equation}
(\E_\HF(\cdot; \alpha_\HF), \D_\HF(\cdot; \beta_\HF), \S_\HF(\cdot; \gamma_\HF)) \qquad \text{and} \qquad (\E_\LF(\cdot; \alpha_\LF), \D_\LF(\cdot; \beta_\LF), \S_\LF(\cdot; \gamma_\LF)).
\end{equation}
We note that, since the NeurAMs are trained independently, additional low-fidelity evaluations can be fully leveraged to construct a more accurate low-fidelity manifold. Then, let $\F_\HF$ and $\F_\LF$ be the cumulative distribution functions in the latent spaces of the two fidelities defined as
\begin{equation}
\F_\HF(t) = \Pr^\mu(\E_\HF(X; \alpha_\HF) \le t) \qquad \text{and} \qquad \F_\LF(t) = \Pr^\mu(\E_\LF(X; \alpha_\LF) \le t),
\end{equation}
which can be estimated using the empirical distributions given by the available pilot sample $\{ x_n \}_{n=1}^N$. Notice that the cumulative distribution functions satisfy
\begin{equation} \label{eq:inverse_transform_sampling}
(\F_\HF \circ \E_\HF(\cdot;\alpha_\HF))_\# \mu = \mathcal U([0,1]) \qquad \text{and} \qquad (\F_\LF \circ \E_\LF(\cdot;\alpha_\LF))_\# \mu = \mathcal U([0,1]),
\end{equation}
where $\#$ denotes the push-forward measure, and which means that
\begin{equation} 
\F_\HF(\E_\HF(X; \alpha_\HF)) \sim \mathcal U([0,1]) \qquad \text{and} \qquad \F_\LF(\E_\LF(X; \alpha_\LF)) \sim \mathcal U([0,1]).
\end{equation}
Therefore, we can define the modified low-fidelity model using the uniformly distributed shared space as a bridge between the two fidelities
\begin{equation} \label{eq:modified_QLF}
\widetilde\Q_\LF(x) = \Q_\LF(\D_\LF(\F_\LF^{-1}(\F_\HF(\E_\HF(x; \alpha_\HF))); \beta_\LF)),
\end{equation}
which can then replace the original $\Q_\LF$ in the definition of the multifidelity Monte Carlo estimator in equation \eqref{eq:MFMC}. A schematic visualization of the presented pipeline is given in \cref{fig:UQ_diagram}. The inverse of the cumulative distribution function $\F_\LF^{-1}$ stands for the generalized inverse defined as
\begin{equation}
\F_\LF^{-1}(u) = \inf \{ t \in \R \colon \F_\LF(t) \ge u \}, \qquad \text{for all } u \in [0,1].
\end{equation}

\begin{figure}[ht!]
\centering
\begin{tikzpicture}[scale=.6]
\draw[fill=green, fill opacity=.3] (-2,0) rectangle (-1,5);
\draw[fill=green, fill opacity=.3] (-0.5,0) -- (1.5,2) -- (1.5,3) -- (-0.5,5) -- cycle;
\draw[fill=green, fill opacity=.3] (2,2) rectangle (3,3);
\draw[fill=red, fill opacity=.3] (5.5,0) -- (3.5,2) -- (3.5,3) -- (5.5,5) -- cycle;
\draw[fill=red, fill opacity=.3] (6,0) rectangle (7,5);
\draw[fill=green, fill opacity=.3] (-1.5,7.5) circle (0.5);
\draw[fill=red, fill opacity=.3] (2.5,7.5) circle (0.5);
\draw [->, thick] (-1.5,5) -- (-1.5,7);
\draw[fill=red, fill opacity=.3] (2,3.5) -- (3,3.5) -- (4,4.5) -- (4,5.5) -- (3,6.5) -- (2,6.5) -- (1,5.5) -- (1,4.5) -- cycle;
\node at (-1.5,2.5) {$x$};
\node at (2.5,2.5) {$t_\HF$};
\node at (6.5,2.5) {$\widetilde x_\HF$};
\node at (-0.5,5.9) {\LARGE $\mathcal Q_\HF$};
\node at (0.5,2.5) {\LARGE $\mathcal E_\HF$};
\node at (4.5,2.5) {\LARGE $\mathcal D_\HF$};
\node at (2.5,5) {\LARGE $\mathcal S_\HF$};
\draw[fill=red, fill opacity=.3] (10,0) rectangle (11,5);
\draw[fill=red, fill opacity=.3] (11.5,0) -- (13.5,2) -- (13.5,3) -- (11.5,5) -- cycle;
\draw[fill=green, fill opacity=.3] (14,2) rectangle (15,3);
\draw[fill=green, fill opacity=.3] (17.5,0) -- (15.5,2) -- (15.5,3) -- (17.5,5) -- cycle;
\draw[fill=green, fill opacity=.3] (18,0) rectangle (19,5);
\draw[fill=red, fill opacity=.3] (14.5,7.5) circle (0.5);
\draw[fill=red, fill opacity=.3] (14,3.5) -- (15,3.5) -- (16,4.5) -- (16,5.5) -- (15,6.5) -- (14,6.5) -- (13,5.5) -- (13,4.5) -- cycle;
\node at (10.5,2.5) {$x$};
\node at (12.5,2.5) {\LARGE $\mathcal E_\LF$};
\node at (16.5,2.5) {\LARGE $\mathcal D_\LF$};
\node at (14.5,5) {\LARGE $\mathcal S_\LF$};
\node at (14.5,2.5) {$t_\LF$};
\node at (18.5,2.5) {$\widetilde x_\LF$};
\draw [->, thick] (18.5,5) -- (18.5,7);
\node at (19.5,5.9) {\LARGE $\mathcal Q_\LF$};
\draw[fill=green, fill opacity=.3] (18.5,7.5) circle (0.5);
\draw[fill=green, fill opacity=.3] (8,-1.5) rectangle (9,-0.5);
\draw [->, thick] (2.5,2) -- (2.5,-1) -- (8,-1);
\draw [->, thick] (9,-1) -- (14.5,-1) -- (14.5,2);
\node at (8.5,-1) {$u$};
\node at (3.5,-0.5) {\LARGE $\mathcal F_\HF$};
\node at (13.6,-0.4) {\LARGE $\mathcal F_\LF^{-1}$};
\end{tikzpicture}
\caption{Schematic representation of the multifidelity uncertainty propagation pipeline based on NeurAM.}
\label{fig:UQ_diagram}
\end{figure}
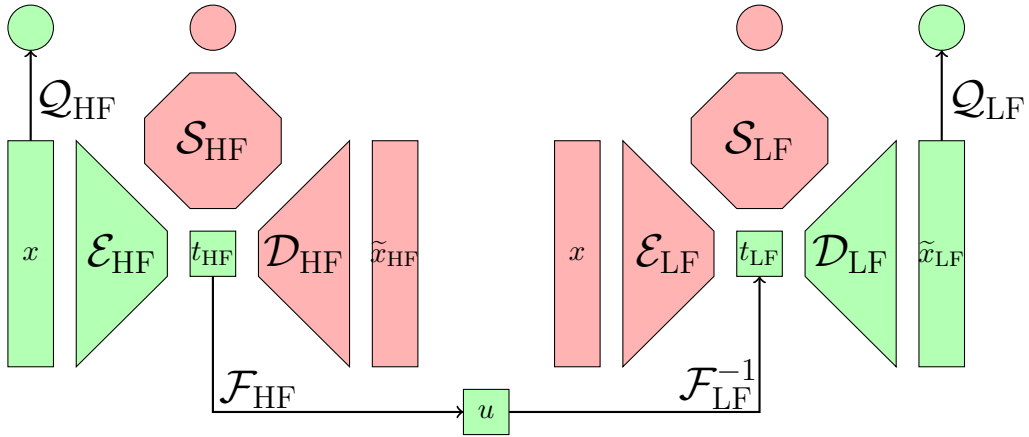

\begin{remark}
We emphasize that the model $\widetilde\Q_\LF$ is purely a reparameterization of the original low-fidelity model $\Q_\LF$, which does not affect the computational cost, except for the evaluations of the networks that can be assumed negligible. In particular, the NeurAM of both the high-fidelity and low-fidelity models have only been used to determine such parameterization. Therefore, the multifidelity estimator remains unbiased since we did not modify the high-fidelity model $\Q_\HF$, and the surrogate models $\S_\HF \circ \E_\HF$ and $\S_\LF \circ \E_\LF$ have not been used to compute the evaluations of the models. 
\end{remark}
\begin{remark}
In this section, for simplicity, we outlined the methodology assuming the high-fidelity and low-fidelity models to have the same parameterization. However, we note that the presented approach is also able to handle models with dissimilar parameterization due to the reduction to a shared one-dimensional manifold; see also~\cite{GEG18,ZGE23}.
\end{remark}
Based on the ideas in \cite{ZGS24}, we expect the modified low-fidelity model to be better correlated with the high-fidelity model, and therefore we aim to achieve a smaller variance for the multifidelity Monte Carlo estimator. Even if the NeurAM of the high-fidelity model $\Q_\HF$ might be less accurate due to the scarcity of available data, the reparameterization $\widetilde\Q_\LF$ of the low-fidelity model, despite being nonoptimal, could still provide a better correlation than the original model $\Q_\LF$. In the next section we consider the idealized setting given by the proof of \cref{pro:existence_minimizer}, and show that, at least in this case, the new correlation 
\begin{equation} \label{eq:modified_correlation}
\widetilde\rho = \frac{\Cov^\mu \left( \Q_\HF(X), \widetilde\Q_\LF(X) \right)}{\sqrt{\Var^\mu[\Q_\HF(X)] \Var^\mu \left [\widetilde\Q_\LF(X) \right]}}
\end{equation}
is indeed better than the original correlation $\rho$ given in \eqref{eq:correlation}. This gives a theoretical guarantee under specific conditions, and therefore justifies the approach described in this section.

\subsubsection{Analysis under idealized setting} \label{sec:formal_analysis}

In this section we assume to reach a loss function equal to zero, and we consider the global minimizer given in the proof of \cref{pro:existence_minimizer}. In particular, let
\begin{equation}
\S_\HF = \mathcal I,  \; \E_\HF = \Q_\HF, \; \Q_\HF \circ \D_\HF = \mathcal I, \quad \text{and} \quad \S_\LF = \mathcal I,  \; \E_\LF = \Q_\LF, \; \Q_\LF \circ \D_\LF = \mathcal I,
\end{equation}
which, due to equation \eqref{eq:modified_QLF}, gives
\begin{equation} \label{eq:modified_QLF_ideal}
\widetilde\Q_\LF(x) = \F_\LF^{-1}(\F_\HF(\Q_\HF(x))).
\end{equation}
Moreover, let us assume that both $\F_\HF$ and $\F_\LF$ are invertible, and, without loss of generality, that $\rho \ge 0$. In fact, if $\rho < 0$, then we can always define $\widetilde\Q_\LF$ starting from $-\Q_\LF$ instead of $\Q_\LF$. 
We can now state the main result of this section: the modified low-fidelity model $\widetilde\Q_\LF$ is better correlated with the high-fidelity model $\Q_\HF$ than the original low-fidelity $\Q_\LF$.
\begin{theorem}
Let $\widetilde\Q_\LF$ be defined as in equation \eqref{eq:modified_QLF_ideal}, and let the correlations $\rho$ and $\widetilde\rho$ be given in \eqref{eq:correlation} and \eqref{eq:modified_correlation}, respectively. Then, it holds
\begin{equation}
\widetilde\rho \ge \rho.
\end{equation}
\end{theorem}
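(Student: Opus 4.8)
The plan is to reduce the claim to a single inequality between covariances and then settle that inequality by a comonotonicity (rearrangement) argument. First I would use \cref{lem:equal_variance}: since $\Var^\mu[\widetilde\Q_\LF(X)] = \Var^\mu[\Q_\LF(X)]$, the denominators of $\rho$ in \eqref{eq:correlation} and $\widetilde\rho$ in \eqref{eq:modified_correlation} coincide (both equal $\sqrt{\Var^\mu[\Q_\HF(X)]\,\Var^\mu[\Q_\LF(X)]}$, assumed nonzero). Hence $\widetilde\rho \ge \rho$ is equivalent to the numerator inequality
\begin{equation}
\Cov^\mu\!\left( \Q_\HF(X), \widetilde\Q_\LF(X) \right) \ge \Cov^\mu\!\left( \Q_\HF(X), \Q_\LF(X) \right).
\end{equation}

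Next I would exploit the explicit structure of $\widetilde\Q_\LF$ in \eqref{eq:modified_QLF_ideal}. Writing $U = \F_\HF(\Q_\HF(X))$, equation \eqref{eq:inverse_transform_sampling} (with $\E_\HF = \Q_\HF$) gives $U \sim \mathcal U([0,1])$; since $\F_\HF$ is assumed invertible, $\Q_\HF(X) = \F_\HF^{-1}(U)$, while $\widetilde\Q_\LF(X) = \F_\LF^{-1}(U)$ by \eqref{eq:modified_QLF_ideal}. Thus the pair $(\Q_\HF(X), \widetilde\Q_\LF(X))$ is generated by a single shared uniform variable, i.e. it is the comonotonic coupling of the marginals $\F_\HF$ and $\F_\LF$; equivalently, $\widetilde\Q_\LF = (\F_\LF^{-1} \circ \F_\HF)(\Q_\HF)$ is a nondecreasing function of $\Q_\HF$. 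Crucially, by \cref{lem:equal_variance} (more precisely by inverse-transform sampling) the pairs $(\Q_\HF(X), \Q_\LF(X))$ and $(\Q_\HF(X), \widetilde\Q_\LF(X))$ have the \emph{same} marginal laws $\F_\HF$ and $\F_\LF$, differing only in how these marginals are coupled.

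The key tool is then Hoeffding's covariance identity, which expresses the covariance of any pair $(Y,Z)$ with joint cdf $H$ and marginals $F_Y, F_Z$ as
\begin{equation}
\Cov(Y,Z) = \int_\R \int_\R \left[ H(y,z) - F_Y(y) F_Z(z) \right] \dd y \dd z.
\end{equation}
Applied to the two pairs above, the integrands differ only through their joint cdfs. The Fr\'echet--Hoeffding upper bound states that every joint cdf with marginals $\F_\HF, \F_\LF$ satisfies $H(y,z) \le \min(\F_\HF(y), \F_\LF(z))$, and this bound is attained pointwise exactly by the comonotonic coupling, which here is $(\Q_\HF(X), \widetilde\Q_\LF(X))$. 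Integrating termwise then yields the covariance inequality, and combined with the equal denominators this gives $\widetilde\rho \ge \rho$. (Note the comonotonic covariance is automatically the largest among all couplings, so the conclusion holds irrespective of the sign normalization $\rho \ge 0$.)

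The main obstacle I anticipate is making the comonotonicity step rigorous when the cdfs fail to be strictly continuous, so that the generalized inverse $\F_\LF^{-1}$ and the push-forward identities \eqref{eq:inverse_transform_sampling} behave as claimed; under the standing assumption that $\F_\HF$ and $\F_\LF$ are invertible this is clean, but a fully general treatment requires care with the quantile transform and with integrability of the Hoeffding integrand, the latter being guaranteed here by the finiteness of the variances. A more elementary alternative, bypassing Hoeffding's identity, is to realize the two joint laws on a common probability space and apply the scalar rearrangement inequality directly, using that $\F_\LF^{-1} \circ \F_\HF$ is nondecreasing; this recovers the same bound.
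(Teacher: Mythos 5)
Your proposal is correct and follows essentially the same route as the paper: the identical reduction via \cref{lem:equal_variance} to a comparison of product moments, followed by the observation that $(\Q_\HF(X),\widetilde\Q_\LF(X))$ is the comonotonic coupling $(\F_\HF^{-1}(U),\F_\LF^{-1}(U))$ of the same marginals shared by $(\Q_\HF(X),\Q_\LF(X))$. The only difference is packaging: where you invoke Hoeffding's covariance identity and the Fr\'echet--Hoeffding upper bound by name, the paper re-derives exactly these facts inline, expanding $\Ex^\mu[\Q_\HF(X)\Q_\LF(X)]$ into four quadrant integrals of joint probabilities and bounding each with elementary $\max$/$\min$ inequalities (and your side remark that the $\rho \ge 0$ normalization is not actually needed for the covariance comparison is also correct).
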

\begin{proof}
First, notice that by the inverse transform sampling $\Q_\LF(X)$ and $\widetilde\Q_\LF(X)$ have the same distribution, and therefore
\begin{equation} \label{eq:equal_variance}
\Ex^\mu \left [\widetilde\Q_\LF(X) \right] = \Ex^\mu \left [\Q_\LF(X) \right] \qquad \text{and} \qquad \Var^\mu \left [\widetilde\Q_\LF(X) \right] = \Var^\mu \left [\Q_\LF(X) \right],
\end{equation}
which implies that $\widetilde \rho \ge \rho$ if and only if 
\begin{equation} \label{eq:goal0}
\Cov^\mu \left( \Q_\HF(X), \widetilde\Q_\LF(X) \right) \ge \Cov^\mu \left( \Q_\HF(X), \Q_\LF(X) \right).
\end{equation}
Then, rewriting the covariances as
\begin{equation}
\begin{aligned}
\Cov^\mu \left( \Q_\HF(X), \widetilde\Q_\LF(X) \right) &= \Ex^\mu[\Q_\HF(X) \widetilde\Q_\LF(X)] - \Ex^\mu[\Q_\HF(X)]  \Ex^\mu[\widetilde\Q_\LF(X)], \\
\Cov^\mu \left( \Q_\HF(X), \Q_\LF(X) \right) &= \Ex^\mu[\Q_\HF(X) \Q_\LF(X)] - \Ex^\mu[\Q_\HF(X)]  \Ex^\mu[\Q_\LF(X)],
\end{aligned}
\end{equation}
and due to equation \eqref{eq:equal_variance}, we deduce that \eqref{eq:goal0} is satisfied if and only if
\begin{equation}
\Ex^\mu[\Q_\HF(X) \widetilde\Q_\LF(X)] \ge \Ex^\mu[\Q_\HF(X) \Q_\LF(X)].
\end{equation}
Let us now rewrite the left-hand side using the fact that the high-fidelity model can be rewritten as
\begin{equation}
\widetilde\Q_\HF(x) = \F_\HF^{-1}(\F_\HF(\Q_\HF(x))).
\end{equation}
In particular, by equation \eqref{eq:inverse_transform_sampling} with $\E_\HF = \Q_\HF$, we have
\begin{equation}
\begin{aligned}
\Ex^\mu[\Q_\HF(X) \widetilde\Q_\LF(X)] &= \int \F_\HF^{-1}(\F_\HF(\Q_\HF(x))) \F_\LF^{-1}(\F_\HF(\Q_\HF(x))) \dd \mu(x), \\
&= \int \F_\HF^{-1}(u) \F_\LF^{-1}(u) \dd (\F_\HF \circ \Q_\HF)_\# \mu(u) \\
&= \int \F_\HF^{-1}(u) \F_\LF^{-1}(u) \dd \mathcal U (u) \\
&= \Ex^{\mathcal U}[\F_\HF^{-1}(U) \F_\LF^{-1}(U)],
\end{aligned}
\end{equation}
where the superscript $\mathcal U$ denotes the fact that the expectation is computed with respect to the uniform distribution $\mathcal U([0,1])$. Therefore, in order to conclude the proof, we have to show that
\begin{equation} \label{eq:goal}
\Ex^{\mathcal U}[\F_\HF^{-1}(U) \F_\LF^{-1}(U)] \ge \Ex^\mu[\Q_\HF(X) \Q_\LF(X)].
\end{equation}
We analyze the two sides separately, and we first consider the left-hand side. The expectation can be rewritten as
\begin{equation}
\begin{aligned}
\Ex^{\mathcal U}[\F_\HF^{-1}(U) \F_\LF^{-1}(U)] &= \int_0^\infty \int_0^\infty \Pr^{\mathcal U}( \F_\HF^{-1}(U) \ge t, \F_\LF^{-1}(U) \ge s ) \dd t \dd s \\
&\quad + \int_{-\infty}^0 \int_{-\infty}^0 \Pr^{\mathcal U}( \F_\HF^{-1}(U) \le t, \F_\LF^{-1}(U) \le s ) \dd t \dd s \\
&\quad - \int_{-\infty}^0 \int_0^\infty \Pr^{\mathcal U}( \F_\HF^{-1}(U) \ge t, \F_\LF^{-1}(U) \le s ) \dd t \dd s \\
&\quad - \int_0^\infty \int_{-\infty}^0 \Pr^{\mathcal U}( \F_\HF^{-1}(U) \le t, \F_\LF^{-1}(U) \ge s ) \dd t \dd s,
\end{aligned}
\end{equation}
which, since $\F_\HF$ and $\F_\LF$ are strictly increasing, implies
\begin{equation}
\begin{aligned}
\Ex^{\mathcal U}[\F_\HF^{-1}(U) \F_\LF^{-1}(U)] &= \int_0^\infty \int_0^\infty \Pr^{\mathcal U}( U \ge \F_\HF(t), U \ge \F_\LF(s) ) \dd t \dd s \\
&\quad + \int_{-\infty}^0 \int_{-\infty}^0 \Pr^{\mathcal U}( U \le \F_\HF(t), U \le \F_\LF(s) ) \dd t \dd s \\
&\quad - \int_{-\infty}^0 \int_0^\infty \Pr^{\mathcal U}( U \ge \F_\HF(t), U \le \F_\LF(s) ) \dd t \dd s \\
&\quad - \int_0^\infty \int_{-\infty}^0 \Pr^{\mathcal U}( U \le \F_\HF(t), U \ge \F_\LF(s) ) \dd t \dd s.
\end{aligned}
\end{equation}
Then, using the fact that $U \sim \mathcal U([0,1])$, we obtain
\begin{equation} \label{eq:decompostion_LHS}
\begin{aligned}
\Ex^{\mathcal U}[\F_\HF^{-1}(U) \F_\LF^{-1}(U)] &= \int_0^\infty \int_0^\infty (1 - \max\{ \F_\HF(t), \F_\LF(s) \}) \dd t \dd s \\
&\quad + \int_{-\infty}^0 \int_{-\infty}^0 \min\{ \F_\HF(t), \F_\LF(s) \} \dd t \dd s \\
&\quad - \int_{-\infty}^0 \int_0^\infty \max\{ 0, \F_\LF(s) - \F_\HF(t) \} \dd t \dd s \\
&\quad - \int_0^\infty \int_{-\infty}^0 \max\{ 0, \F_\HF(t) - \F_\LF(s) \} \dd t \dd s.
\end{aligned}
\end{equation}
On the other hand, the right-hand side of \eqref{eq:goal} reads
\begin{equation} \label{eq:decomposition_RHS}
\begin{aligned}
\Ex^\mu[\Q_\HF(X) \Q_\LF(X)] &= \int_0^\infty \int_0^\infty \Pr^\mu( \Q_\HF(X) \ge t, \Q_\LF(X) \ge s ) \dd t \dd s \\
&\quad + \int_{-\infty}^0 \int_{-\infty}^0 \Pr^\mu( \Q_\HF(X) \le t, \Q_\LF(X) \le s ) \dd t \dd s \\
&\quad - \int_{-\infty}^0 \int_0^\infty \Pr^\mu( \Q_\HF(X) \ge t, \Q_\LF(X) \le s ) \dd t \dd s \\
&\quad - \int_0^\infty \int_{-\infty}^0 \Pr^\mu( \Q_\HF(X) \le t, \Q_\LF(X) \ge s ) \dd t \dd s.
\end{aligned}
\end{equation}
We now compare each term in the right-hand side of equations \eqref{eq:decompostion_LHS} and \eqref{eq:decomposition_RHS} separately. First, we have
\begin{equation}
\begin{aligned}
\max\{ \F_\HF(t), \F_\LF(s) \} &= \max\{ \Pr^\mu( \Q_\HF(X) \le t ), \Pr^\mu( \Q_\LF(X) \le s ) \} \\
&\le \Pr^\mu( \Q_\HF(X) \le t ) + \Pr^\mu( \Q_\LF(X) \le s ) \\
&\quad - \Pr^\mu( \Q_\HF(X) \le t, \Q_\LF(X) \le s ),
\end{aligned}
\end{equation}
which implies
\begin{equation} \label{eq:bound1}
\max\{ \F_\HF(t), \F_\LF(s) \} \ge \Pr^\mu( \Q_\HF(X) \ge t, \Q_\LF(X) \ge s ).
\end{equation}
Then, for the second term we find
\begin{equation} \label{eq:bound2}
\begin{aligned}
\min\{ \F_\HF(t), \F_\LF(s) \} &= \min\{ \Pr^\mu( \Q_\HF(X) \le t ), \Pr^\mu( \Q_\LF(X) \le s ) \} \\
&\ge \Pr^\mu( \Q_\HF(X) \le t, \Q_\LF(X) \le s ).
\end{aligned}
\end{equation}
For the third term we have
\begin{equation} \label{eq:bound3}
\begin{aligned}
\Pr^\mu( \Q_\HF(X) \ge t, \Q_\LF(X) \le s ) &= \Pr^\mu( \Q_\LF(X) \le s ) - \Pr^\mu( \Q_\HF(X) \le t, \Q_\LF(X) \le s ) \\
&\ge  \max\{ 0, \Pr^\mu( \Q_\LF(X) \le s ) - \Pr^\mu( \Q_\HF(X) \le t) \} \\
&= \max\{ 0, \F_\LF(s) - \F_\HF(t) \},
\end{aligned}
\end{equation}
and similarly for the fourth term
\begin{equation} \label{eq:bound4}
\begin{aligned}
\Pr^\mu( \Q_\HF(X) \le t, \Q_\LF(X) \ge s ) &= \Pr^\mu( \Q_\HF(X) \le t ) - \Pr^\mu( \Q_\HF(X) \le t, \Q_\LF(X) \le s ) \\
&\ge  \max\{ 0, \Pr^\mu( \Q_\HF(X) \le t ) - \Pr^\mu( \Q_\LF(X) \le s) \} \\
&= \max\{ 0, \F_\HF(t) - \F_\LF(s) \}.
\end{aligned}
\end{equation}
Finally, collecting the bounds \eqref{eq:bound1}, \eqref{eq:bound2}, \eqref{eq:bound3}, \eqref{eq:bound4} together with the decompositions \eqref{eq:decompostion_LHS}, \eqref{eq:decomposition_RHS}, we deduce the inequality \eqref{eq:goal}, which implies the desired result.
\end{proof}

\begin{remark} \label{rem:ideal}
The analysis in this section is performed under the assumption that the neural active manifold is constructed using a global minimizer of the loss function, and hence we put ourselves in the idealized framework in which the models can be reconstructed exactly from the one-dimensional NeurAM. Even if this cannot be achieved in practice, this theoretical analysis gives a formal justification for why it is worth modifying the low-fidelity model, i.e., draw new samples from the shared space obtained from the NeurAM of the two fidelities. Therefore, we remark that, even if we cannot achieve the best correlation $\widetilde\rho$ in practice, our methodology, in most of the cases, gives a new correlation that is better than the original correlation $\rho$, as we observe in the following numerical experiments. Notice that the ideal correlation obtained from equation \eqref{eq:goal}, i.e.,
\begin{equation} \label{eq:ideal_corr}
\widetilde \rho = \frac{\Cov^{\mathcal U} \left( \F_\HF^{-1}(U), \F_\LF^{-1}(U) \right)}{\sqrt{\Var^{\mathcal U}[\F_\HF^{-1}(U)] \Var^{\mathcal U} [\F_\LF^{-1}(U)]}},
\end{equation}
can be used as a reference value for the new correlation that we can get in practice, since it is computed in the idealized framework where the reduction to one dimension still allows us to reconstruct the original model exactly. Finally, we emphasize that the effectiveness of our strategy relies on accurately capturing the models variability through the low-dimensional manifolds. Therefore, the resulting correlation directly depends on the amount of available data and the complexity of the models. Nevertheless, even with a limited computational budget, we should be able to achieve larger correlation, and consequently variance reduction, although it may not be optimal. Otherwise, if no improvement in terms of correlation is observed, the standard multifidelity approach based on the original model inputs remains a valid alternative.
\end{remark}

\section{Numerical examples} \label{sec:examples}

In this section we apply our algorithm to construct a neural active manifold for several models.
We first consider simple two-dimensional analytical functions to demonstrate the performance of NeurAM while providing intuition on the process of reducing the problem dimensionality by directly plotting the active manifold. Two computationally challenging test cases are then analyzed to demonstrate that the approach is viable for realistic problems. 
The first problem we consider is the so-called Hartmann problem \cite{SPC16}, which has been used previously as a test case for other dimensionality reduction strategies, such as AS~\cite{GCS17} and AM~\cite{BGF19}. 
We also study a more complex cardiac electrophysiology model with biphasic response, requiring the solution of a preliminary classification task. 
Finally, we compare the multifidelity UQ pipeline introduced in the previous section with standard multifidelity Monte Carlo estimators for model with multiple inputs, and perform sensitivity analysis. 
We remark that, for each numerical example, we use a different set of hyperparameters (layers and neurons) for the neural networks corresponding to the encoder $\E$, decoder $\D$, and surrogate model $\S$. 
All the hyperparameters are optimized using Optuna~\cite{ASY19} monitoring the validation loss (20\% of the dataset) for $100$ iterations.
In particular, we constrain the number of layers to be in $\{ 1, \dots, 4 \}$ and the number of neurons per layer to be in $\{ 1, \dots, 16 \}$, and we train all the networks for 10000 epochs. Hence, we emphasize that the resulting neural networks have a limited size, implying a minimal computational cost for their training. 

\subsection{Two-dimensional models} \label{sec:2Dmodels}

We analyze several two-dimensional synthetic test cases to assess the performance of the proposed approach and visualize the neural active manifold.

\subsubsection{Parabolic neural active manifold} \label{sec:parabola}

We first consider the model $Q \colon \R^2 \to \R$ defined as
\begin{equation}
Q(x) = x_1^2 + x_2,
\end{equation}
and let the distribution of the input parameters be $\mu = \mathcal U([0,1]^2)$. Notice that this is an example where the global minimizer given by the proof of \cref{pro:existence_minimizer} can be computed analytically. In particular, we have
\begin{equation} \label{eq:parabola_NeurAM}
\S^*(t) = t, \qquad \E^*(x) = x_1^2 + x_2, \qquad \D^*(t) = \begin{bmatrix} \sqrt{\frac{t}2} & \frac{t}2 \end{bmatrix}^\top,
\end{equation}
which implies that $\mathcal L(\E^*, \D^*, \S^*) = 0$. The NeurAM is then parameterized by the decoder $\D^*(t)$ for all values of $t$ in the interval
\begin{equation}
\mathcal T = \left[ \min_{x_1,x_2\in[0,1]} (x_1^2 + x_2), \max_{x_1,x_2\in[0,1]} (x_1^2 + x_2) \right] = [0, 2],
\end{equation}
and is represented by a parabola of the form $x_2 = x_1^2$ for all $x_1 \in [0,1]$. 
In \cref{fig:parabola_manifold} we illustrate the NeurAM that we computed analytically on top of the contour plot of the model, and we show through red arrows how the points are projected on the manifold by the autoencoder, i.e., computing $\D(\E(x))$. 
Moreover, from the parity plot on the right, we observe that the exact and surrogate models coincide, i.e., $\Q(x)=\Q_{\mathrm S}(x) = \S(\E(x))$, resulting in the entire model variance being captured.

Let us now slightly modify the function $Q$, and consider a more complex model $\Q \colon \R^2 \to \R$ given by
\begin{equation}
\Q(x) = \sin(Q(x)) = \sin(x_1^2 + x_2),
\end{equation}
for which the NeurAM remains the same if we choose as surrogate model $\S(t) = \sin(t)$. 
We then run our algorithm with an increasing number of training samples $N = 10,$ $ 30,$ $50, 100, 500, 1000$, and compute the mean absolute error (MAE) and the mean squared error (MSE) for 
\begin{equation} \label{eq:errors_def}
e_1(x) = \Q(x) - \Q(\D(\E(x))), \qquad e_2(x) = \Q(x) - \Q_{\mathrm S}(x),
\end{equation}
over $100$ repetitions. 
Note that $e_1$ only measures the error due to the dimensionality reduction, while $e_2$ quantifies the error in the surrogate model. 
The MAE and MSE for $e_1$ and $e_2$ are computed using a testing set with $1000$ randomly selected samples. 
In \cref{fig:parabola_error} we present the resulting error trends, using error bars to represent standard deviations produced by repeated runs. 
As expected, both errors decrease with the increase of the training dataset, until they reach a plateau. We remark that the standard deviation of the model $\Q$ over its domain is
\begin{equation}
\sqrt{\Ex^\mu \left[ \left( \Q(X) - \Ex^\mu[\Q(X)] \right)^2 \right]} \simeq 0.258,
\end{equation}
implying that the relative errors are approximately of the same order of the absolute errors reported in \cref{fig:parabola_error}.

\begin{figure}[ht!]
\centering
\includegraphics{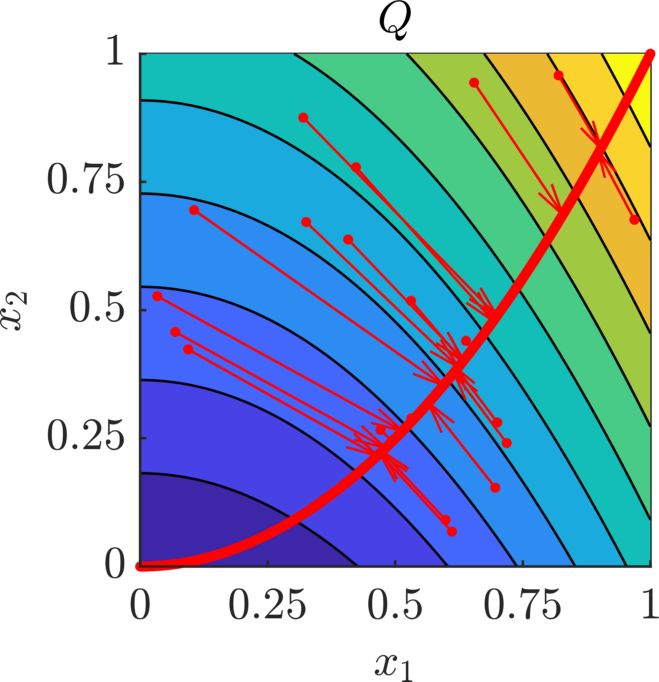} \hspace{2cm}
\includegraphics{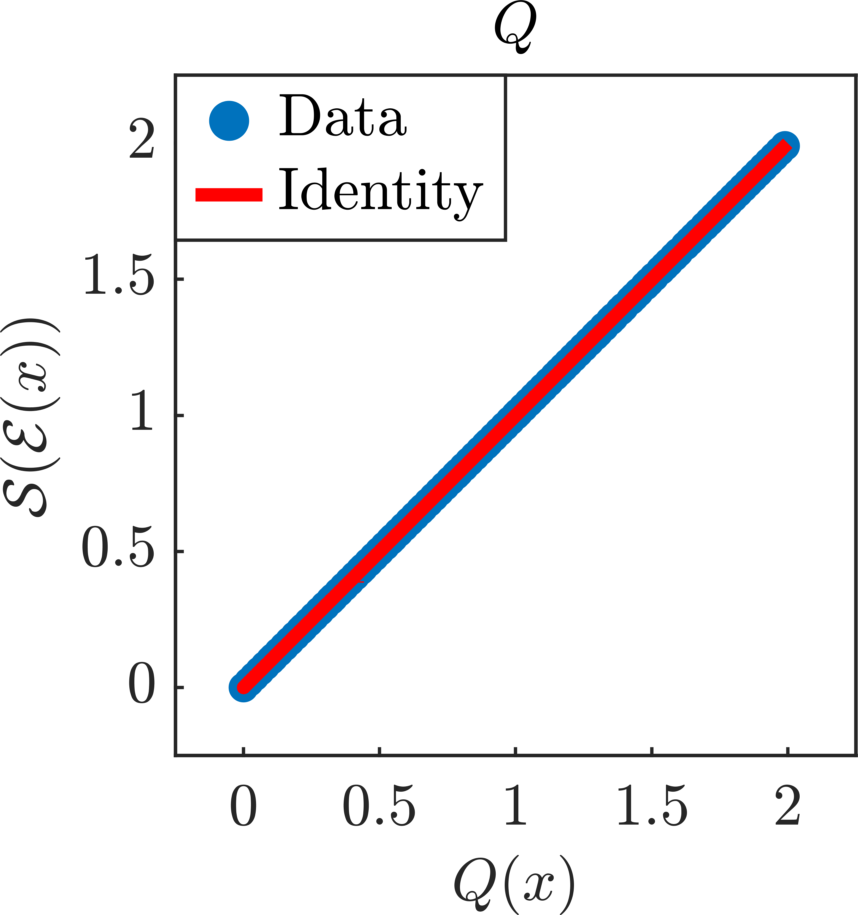}
\caption{Analytical results for the two-dimensional model $Q$. Left: projection of input samples on the one-dimensional NeurAM. Right: parity plot between the exact and surrogate models.}
\label{fig:parabola_manifold}
\end{figure}

\begin{figure}[ht!]
\centering
\includegraphics{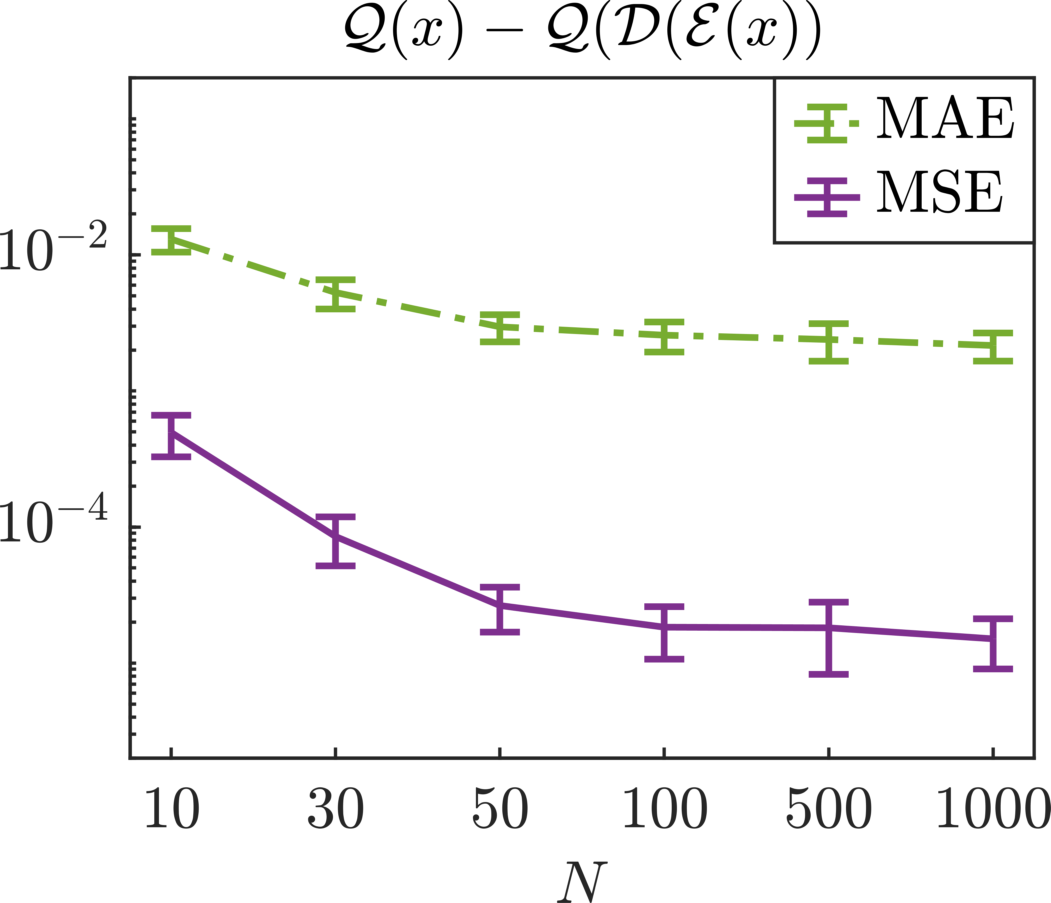} \hspace{1cm}
\includegraphics{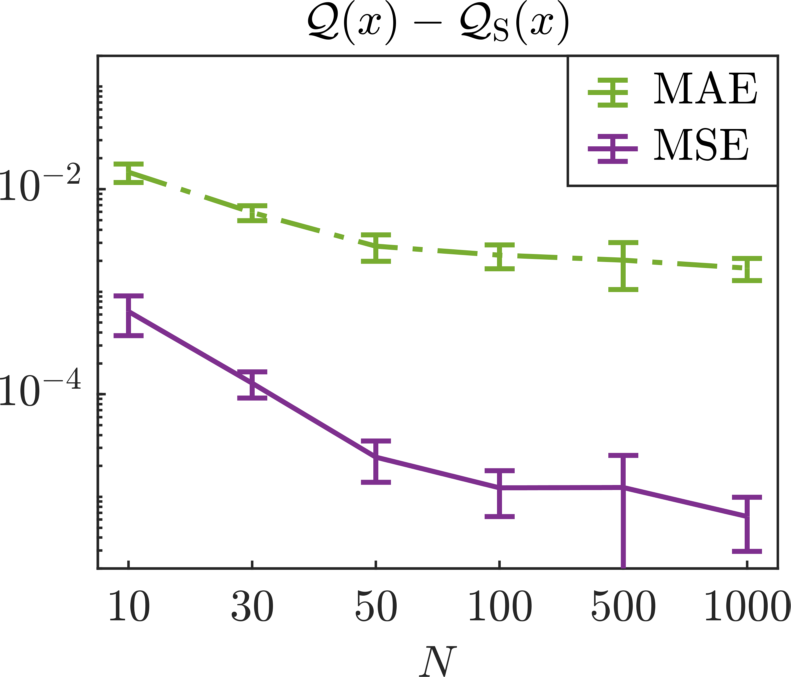}
\caption{Approximation errors (MAE and MSE) in equation \eqref{eq:errors_def} as functions of the size of the training set $N$ for the two-dimensional model $\Q$. Left: error $e_1$ due to the dimensionality reduction. Right: error $e_2$ of the surrogate model.}
\label{fig:parabola_error}
\end{figure}

\subsubsection{Comparison with active manifolds and active subspaces} \label{sec:numerical_comparison}

We consider the following two-dimensional models that have been used to test AM in~\cite{BGF19}
\begin{equation}
\begin{aligned}
\Q_1(x) &= e^{x_2 - x_1^2}, \\
\Q_2(x) &= x_1^2 + x_2^2, \\
\Q_3(x) &= x_1^3 + x_2^3 + 0.2x_1 + 0.6x_2,
\end{aligned}
\end{equation}
and we let the distribution of the input parameters be $\mu = \mathcal U([-1,1]^2)$.
We first compute NeurAM using $N = 1000$ samples. 
The results in \cref{fig:paperAM_manifold} show the projections determined through dimensionality reduction (red arrows), while the parity plots show how the surrogate model on the latent space, i.e., as a function of the reduced variable, is able to capture practically the entire variability for all the models. We remark that these plots are obtained for one possible realization of the NeurAM, meaning that the method can generate alternative NeurAM with similar overall results.

We then compare the performance of NeurAM with results for AS and AM, as reported in \cite[Table 1]{BGF19}, where the gradients of the models are computed analytically.
We run our algorithm $100$ times using $N=1000$ training samples, and we compute the errors using equation \eqref{eq:errors_def} on a testing set of size $1000$. 
Even though our approach presents a slightly larger variability, which is mainly due to the fact that the solution of the optimization algorithm is not unique, \cref{fig:paperAM_comparison} shows that NeurAM is consistently able to achieve smaller errors than competing algorithms. 
Moreover, our method is always able to find a lower-dimensional representation, unlike AM for which a percentage of points in the domain is left out as shown in the table in \cref{fig:paperAM_comparison}, meaning that we can always compute the projection of a point on the lower-dimensional manifold, similarly to AS. Finally, NeurAM requires fewer training points (1000 for NeurAM and 8000 for AM and AS as stated in~\cite{BGF19}), and, more crucially, does not need the computation of the  gradient, which is instead necessary for both AS and AM.

\begin{figure}[ht!]
\centering
\begin{tabular}{ccc}
\includegraphics{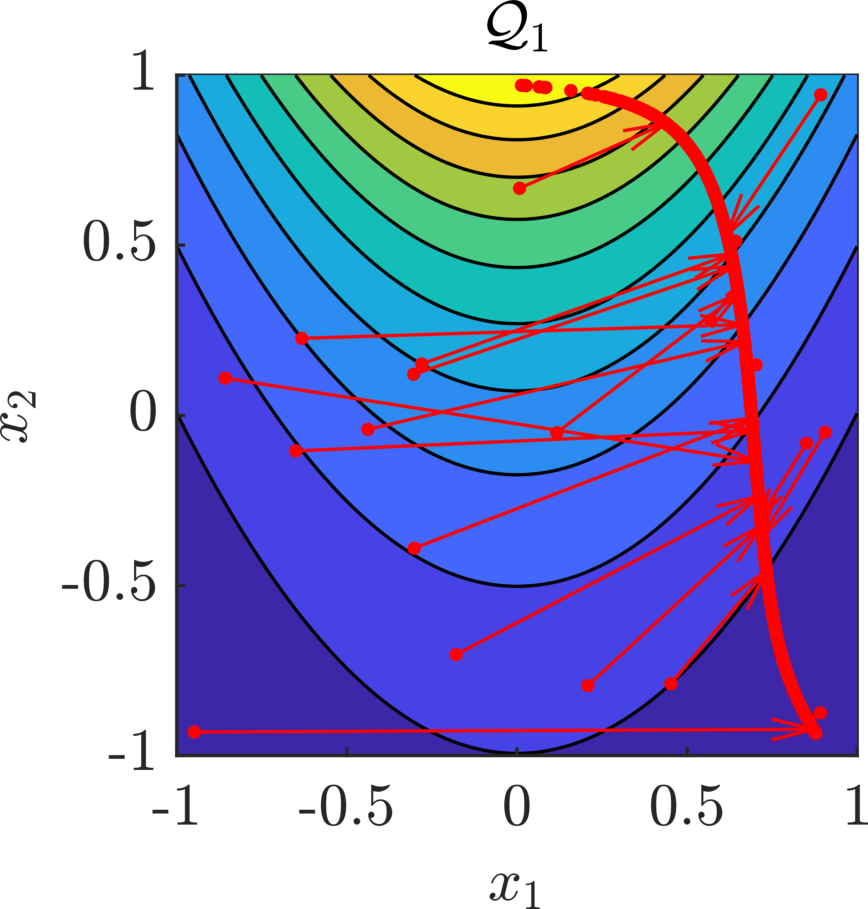} &
\includegraphics{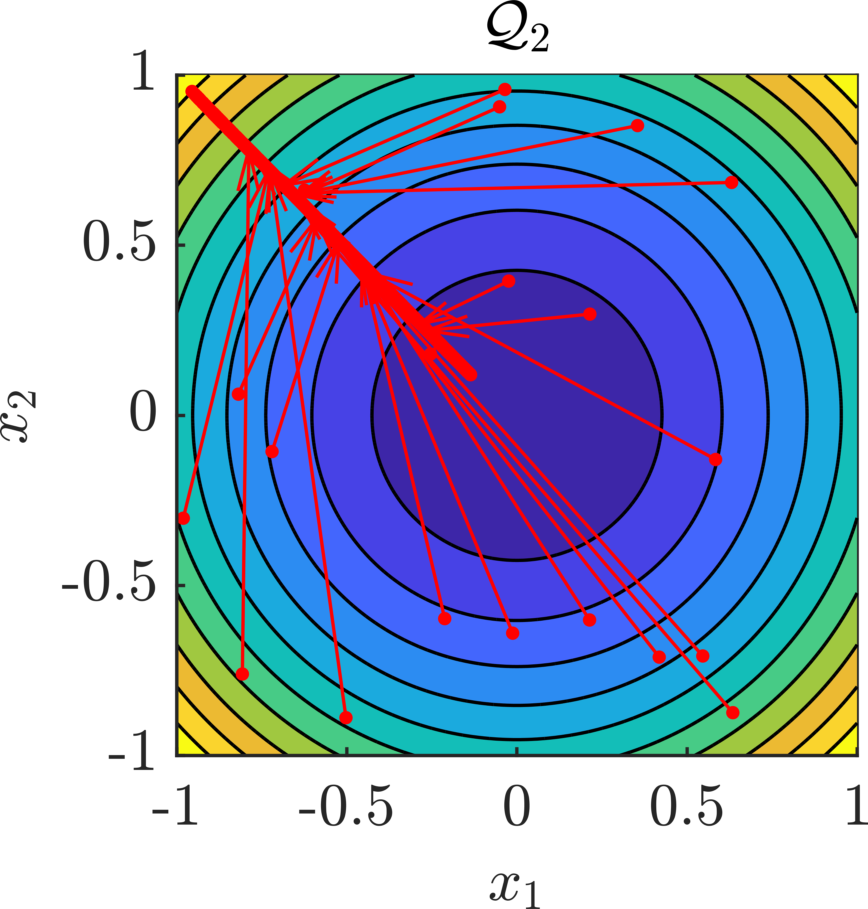} &
\includegraphics{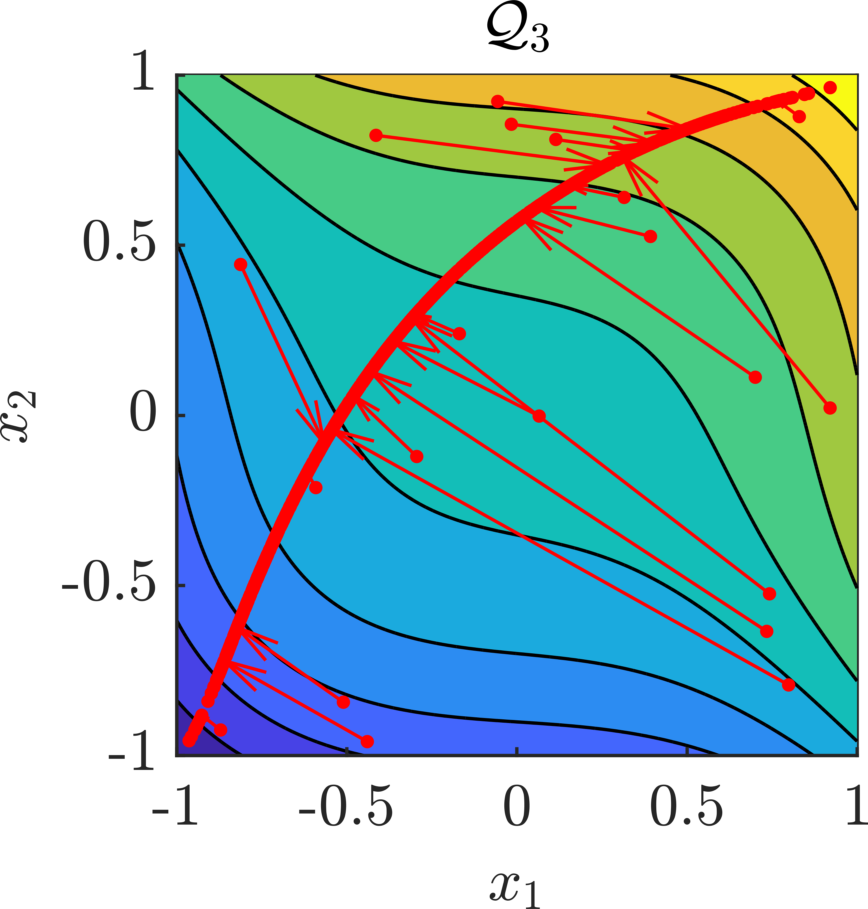} \\
&&\\
\includegraphics{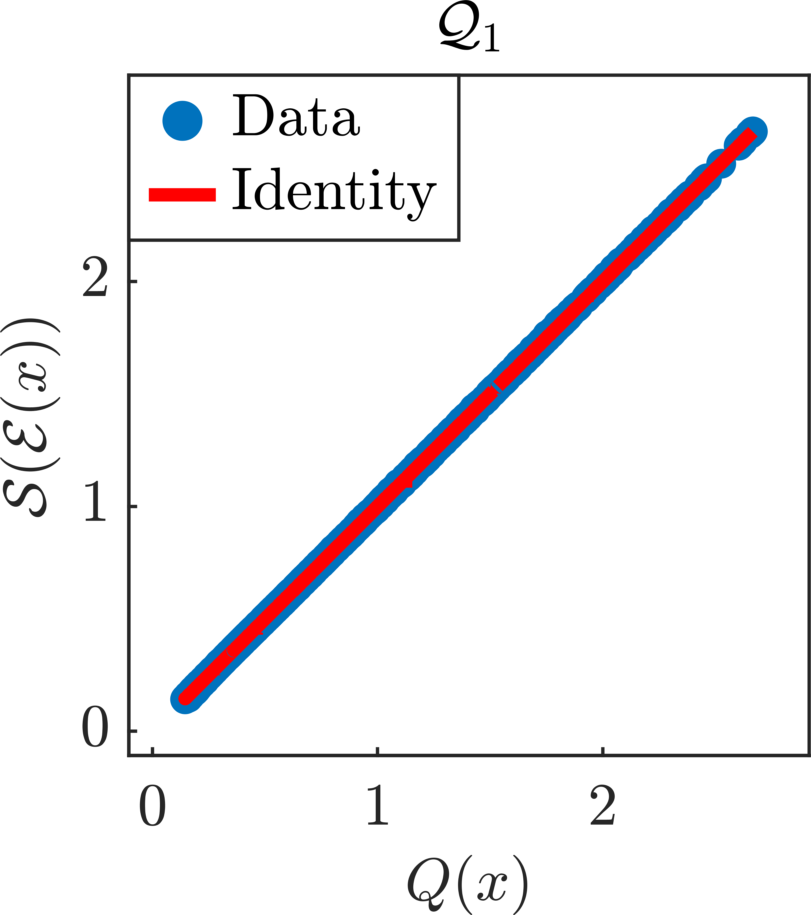} &
\includegraphics{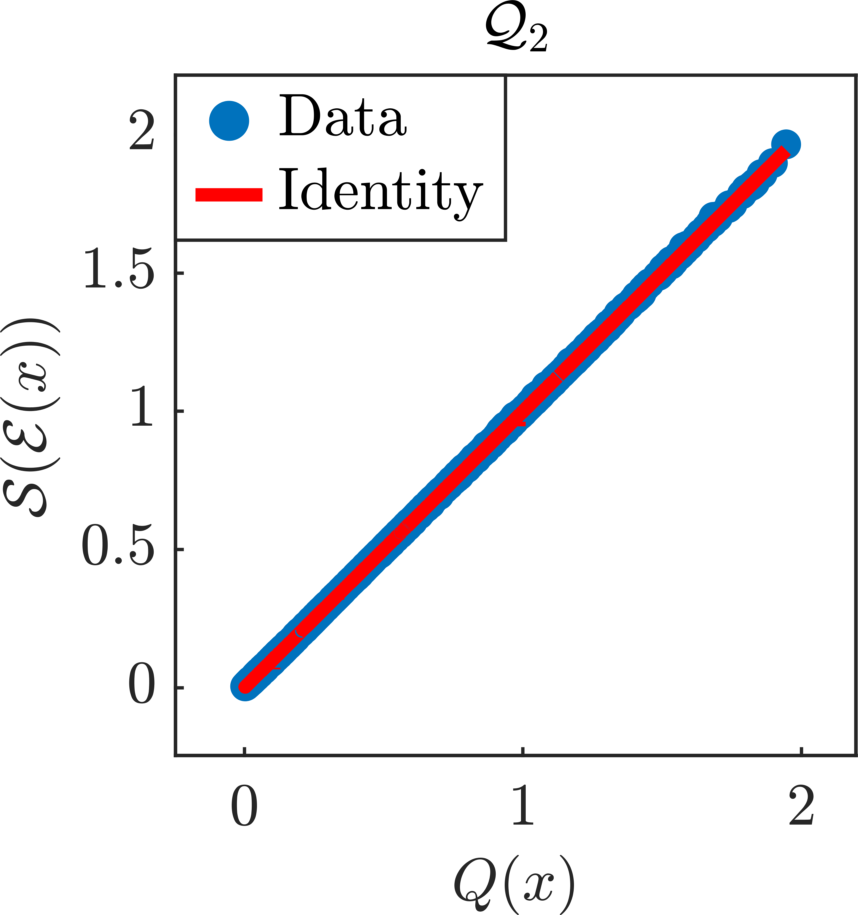} &
\includegraphics{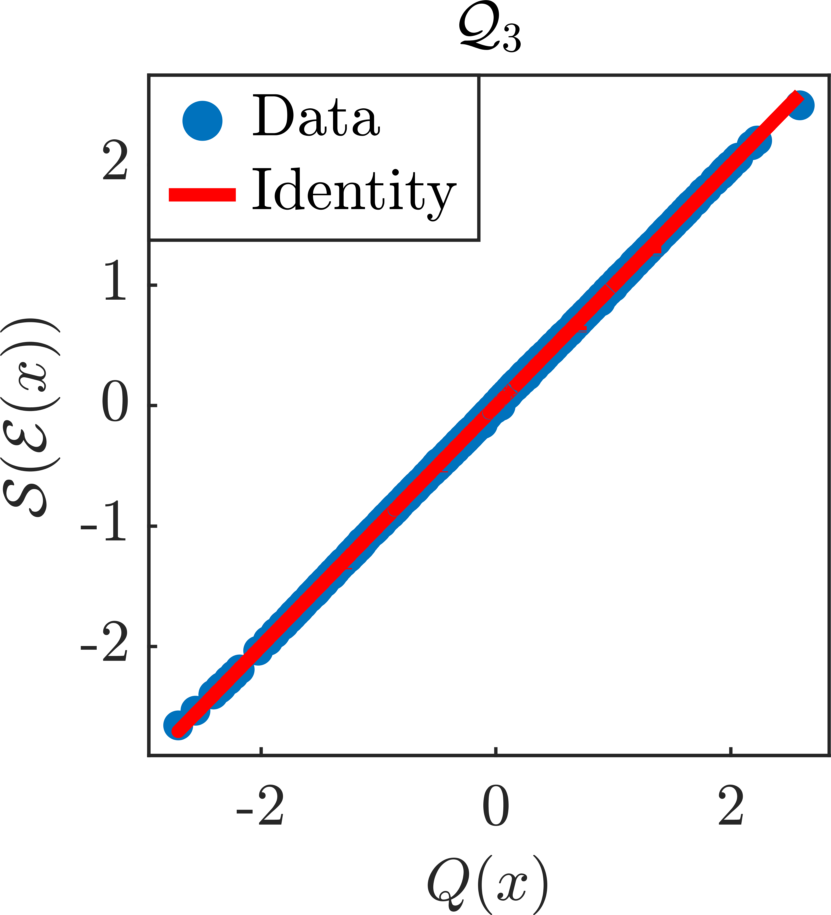}
\end{tabular}
\caption{Results for the two-dimensional models $\Q_1, \Q_2, \Q_3$, for one realization of the NeurAM. Top: projection of input samples on the one-dimensional NeurAM. Bottom: parity plot between the exact and surrogate models.}
\label{fig:paperAM_manifold}
\end{figure}

\begin{figure}[ht!]
\small
\centering
\begin{tabular}{ccc}
\includegraphics{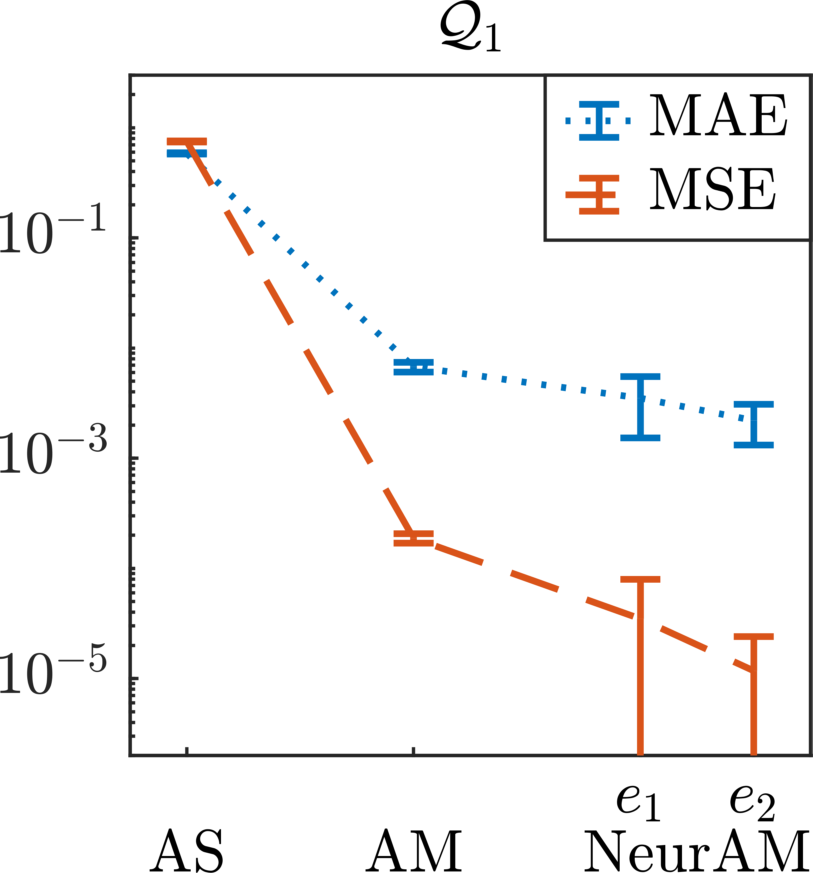} &
\includegraphics{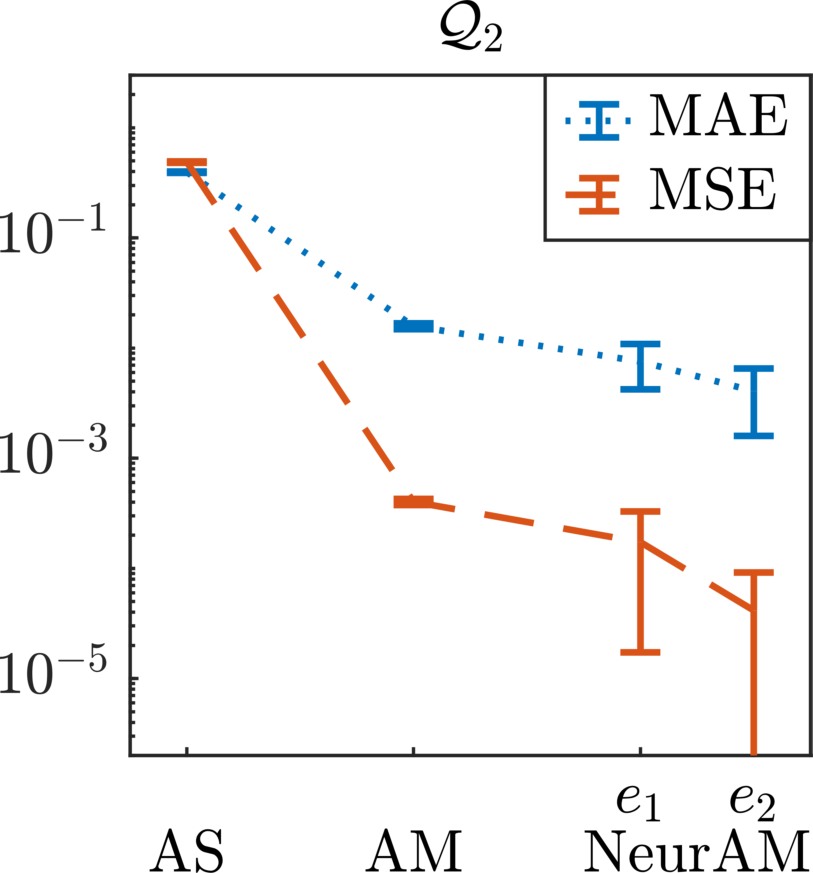} &
\includegraphics{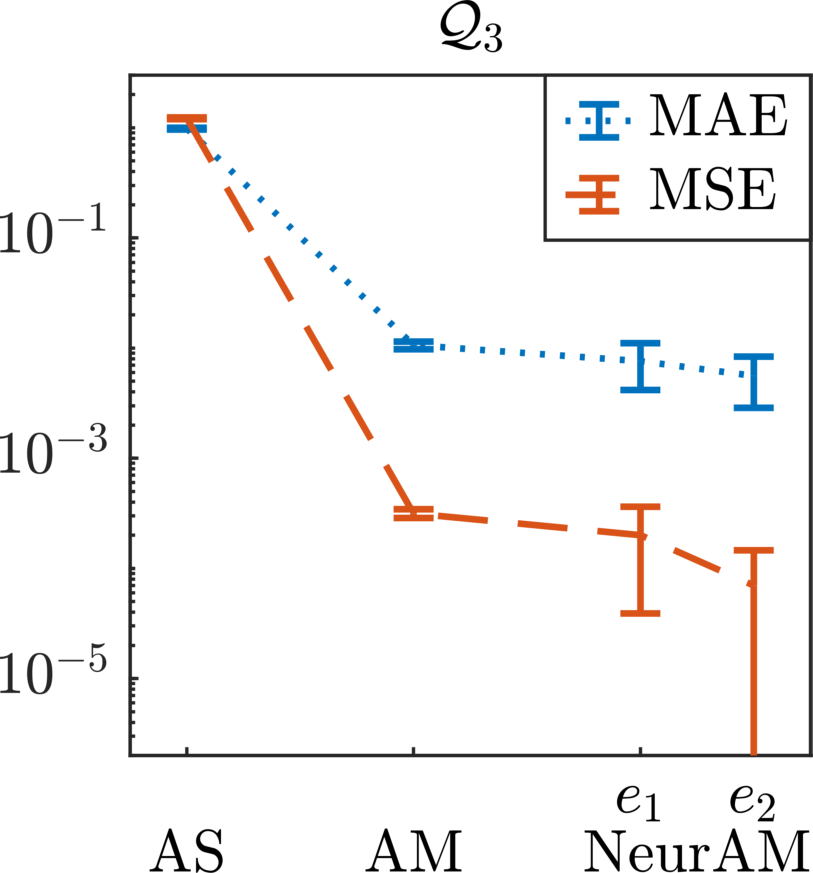} \\[0.5cm]
\midrule
$\Q_1$ & $\Q_2$ & $\Q_3$ \\[0.125cm]
\midrule
\begin{tabular}{ccc}
AS & AM & NeurAM \\
\midrule
100 \% & 86.7 \% & 100 \%
\end{tabular} &
\begin{tabular}{ccc}
AS & AM & NeurAM \\
\midrule
100 \% & 77 \% & 100 \%
\end{tabular} &
\begin{tabular}{ccc}
AS & AM & NeurAM \\
\midrule
100 \% & 92.9 \% & 100 \%
\end{tabular} \\
\bottomrule
\end{tabular}
\caption{Comparison of approximation errors (MAE and MSE) for AS, AM, and NeurAM, applied to the two-dimensional models $\Q_1, \Q_2, \Q_3$. Top: plot of the approximation errors, where the values for AS and AM are reported from \cite[Table 1]{BGF19}, and the errors $e_1$ and $e_2$ for NeurAM are defined in equation \eqref{eq:errors_def}. These results consider 8000 training samples for AS and AM, and 1000 samples for NeurAM.
Bottom: percentage of test points for which the algorithm successfully found an approximation.}
\label{fig:paperAM_comparison}
\end{figure}

\subsubsection{NeurAM for multifidelity uncertainty propagation} \label{sec:numerical_MF}

In this section we show another advantage of NeurAM, the ability to enhance the performance of multifidelity Monte Carlo estimators, without any additional modification or computational cost.
Specifically, we consider the following high-fidelity and low-fidelity models
\begin{equation}
\begin{aligned}
\Q_\HF(x) &= e^{0.7x_1 + 0.3x_2} + 0.15\sin(2\pi x_1), \\
\Q_\LF(x) &= e^{0.01x_1 + 0.99x_2} + 0.15\sin(3\pi x_2),
\end{aligned}
\end{equation}
proposed in~\cite{GEG18} to test the application of AS to multifidelity UQ, and then further employed in~\cite{ZGS24} in the context of nonlinear dimensionality reduction.
We assume $x\sim\mu = \mathcal U([-1,1]^2)$, and focus on the estimation of the quantity of interest $q$ expressed as
\begin{equation}
q = \Ex^\mu[\Q_\HF(X)] = \frac{25}{21} \left( e^{-1} - e^{-\frac25} - e^{\frac25} + e\right).
\end{equation}
We further assume a computational budget $\mathcal B = 1000$ and a cost of our low-fidelity model equal to $\mathcal C_\LF = 0.01 \mathcal C_\HF$ ($w = 0.01$) to mirror cost differences in realistic applications.

We compute NeurAM for both the high-fidelity and low-fidelity models separately using $N = 1000$ training samples, and present the results in \cref{fig:paperAS_manifold}, which shows that the variability for both models is entirely captured.
We remark that, even if we might expect a vertical NeurAM, as $\Q_\LF(x)$ only varies along the vertical direction, any curve going from the bottom (minimum value) to the top (maximum value) of the domain is a valid NeurAM.
What is crucial is how points outside the manifold are projected on it, which occurs horizontally, i.e., along the direction where the model response is constant. 
This indeed suggests the correctness of the results for this case.

We then move to the problem of estimating the quantity of interest $q$ using a multifidelity Monte Carlo estimator. 
We compare multifidelity Monte Carlo from~\eqref{eq:MFMC} with our modified estimator, where $\widetilde\Q_\LF$ is used instead of $\Q_\LF$, and computed applying NeurAM to map high-fidelity to low-fidelity model inputs.
We also include a single fidelity Monte Carlo estimator in our comparison as a reference.
\cref{fig:paperAS_MFMC} shows the results obtained from $100$ repeated evaluations of the three estimators.
We observe that the original models $\Q_\HF$ and $\Q_\LF$ are badly correlated, leading to a poor performance of the multifidelity Monte Carlo estimator, which indeed does not improve over single-fidelity Monte Carlo. 
Conversely, NeurAM significantly increases the correlation between the models, leading to a multifidelity estimator with reduced variance. 
Finally, the box plot in \cref{fig:paperAS_MFMC} also includes the ideal correlation from equation~\eqref{eq:ideal_corr}, which seems to act as an upper bound for the correlation achievable through NeurAM.

\begin{figure}[ht!]
\centering
\begin{tabular}{ccc}
\includegraphics{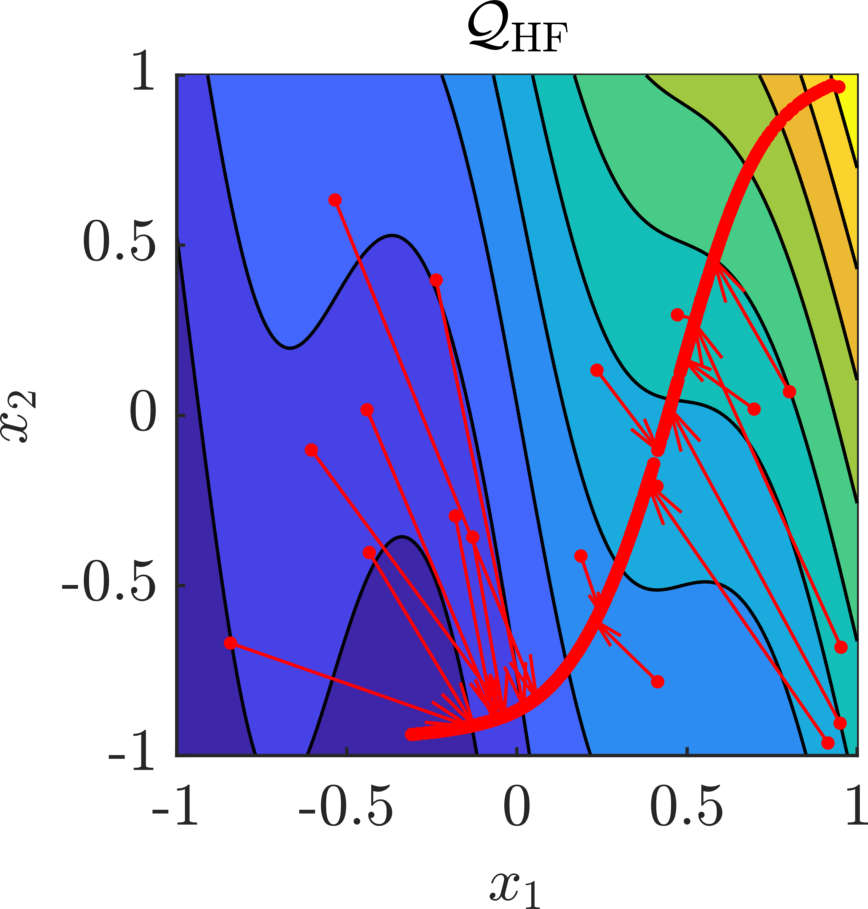} &$\qquad$&
\includegraphics{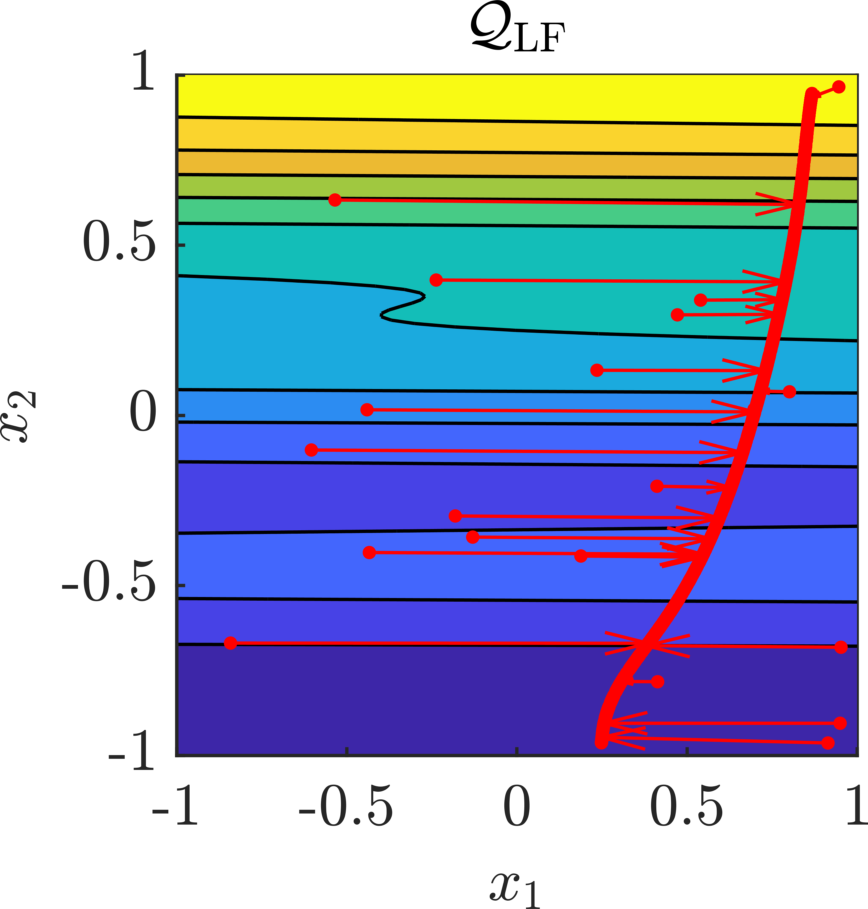} \\
\includegraphics{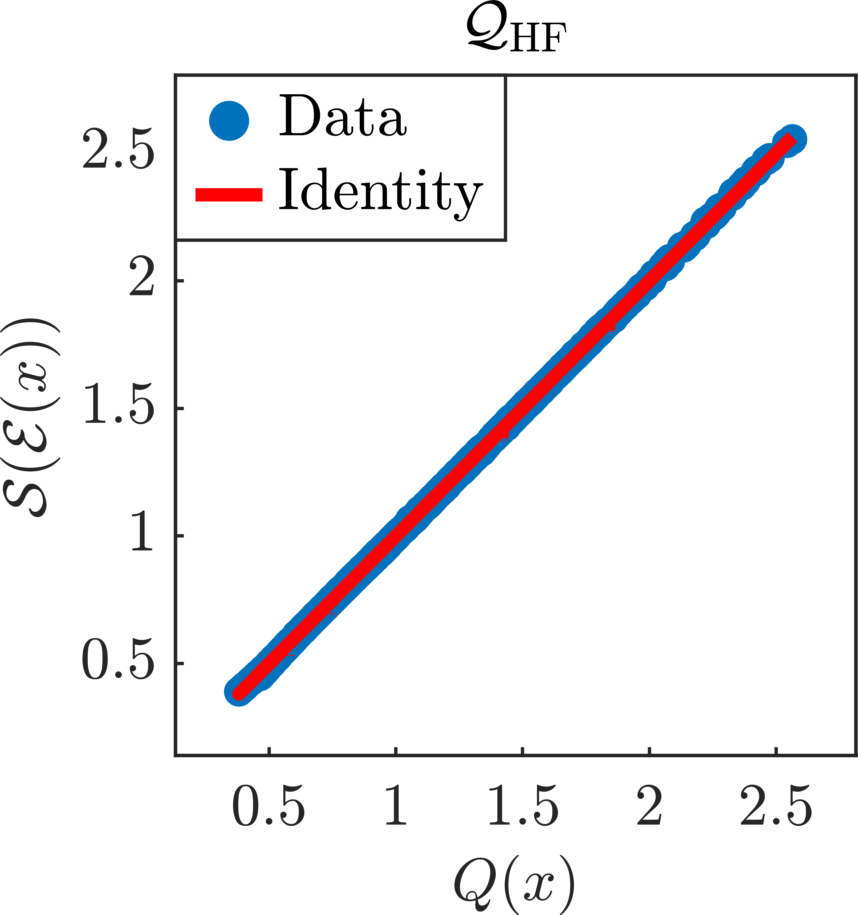} &$\qquad$& 
\includegraphics{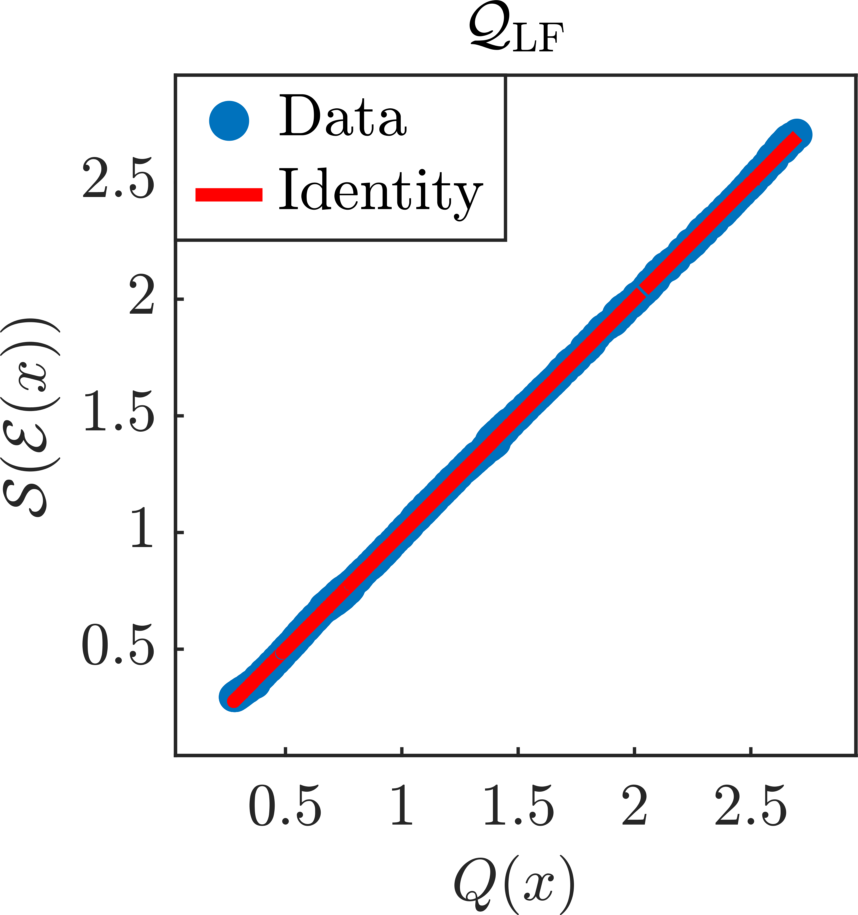}
\end{tabular}
\caption{Results for the two-dimensional models $\Q_\HF, \Q_\LF$, for one realization of the NeurAM. Top: projection of input samples on the one-dimensional NeurAM. Bottom: parity plot between the exact and surrogate models.}
\label{fig:paperAS_manifold}
\end{figure}

\begin{figure}[ht!]
\centering
\includegraphics{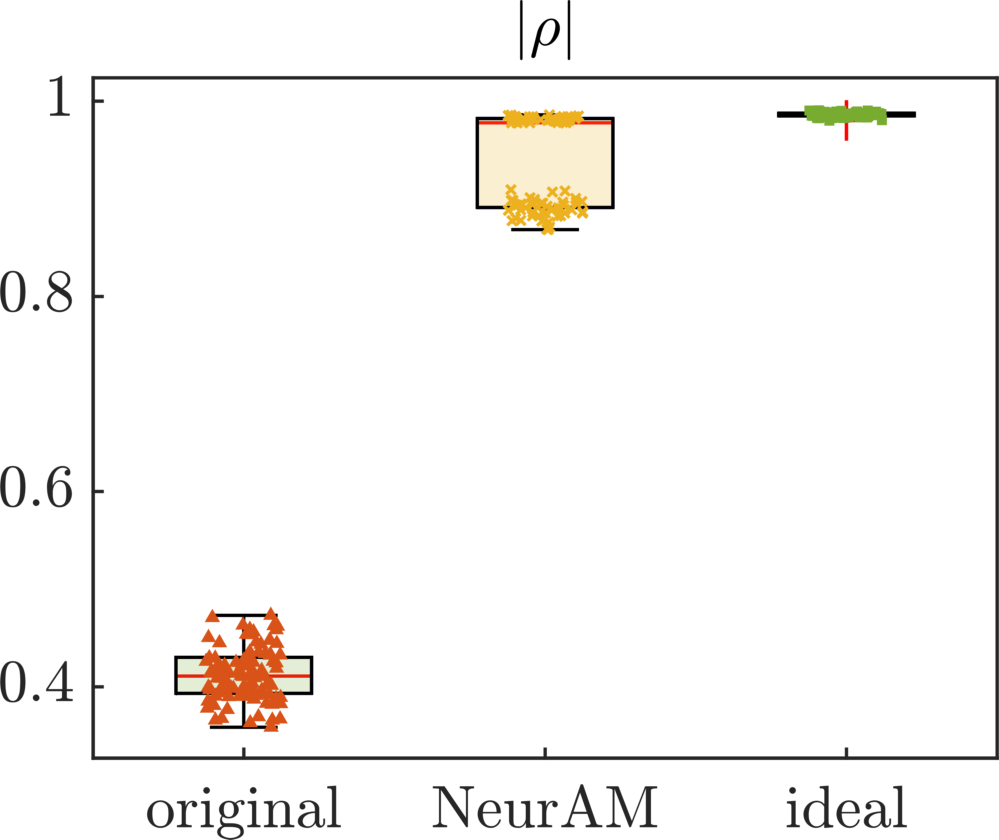} \hspace{1.5cm}
\includegraphics{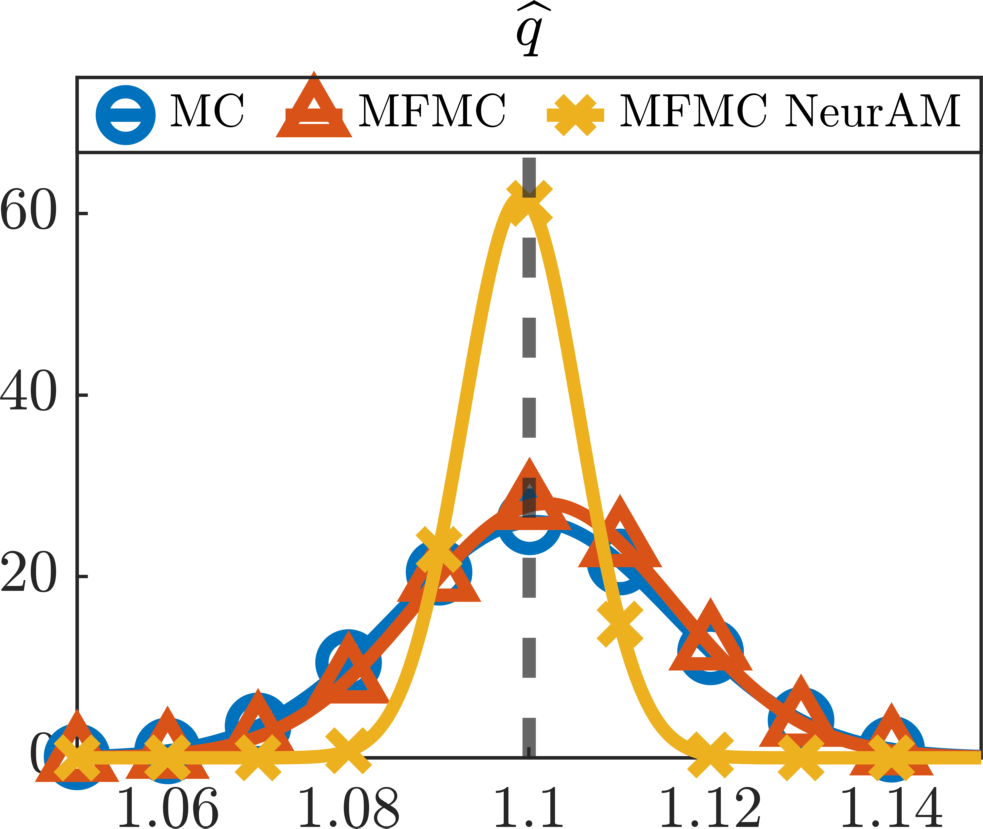}
\caption{Numerical results for the multifidelity uncertainty propagation. Left: Pearson correlation coefficients. Right: approximated distributions from single-fidelity and multifidelity Monte Carlo estimators. The dashed line indicates the exact value of the quantity of interest $q$.}
\label{fig:paperAS_MFMC}
\end{figure}

\subsubsection{Sensitivity analysis for the two-dimensional models} 

Consider the model $Q$ with the parabolic NeurAM in \cref{sec:parabola}, for which we can compute the sensitivity indices analytically. 
Substituting the expressions in equation~\eqref{eq:parabola_NeurAM} into the local indices~\eqref{eq:local_SA} we obtain 
\begin{equation}
\theta_1(t) = \frac{2t}{2t+1} \qquad \text{and} \qquad \theta_2(t) = \frac{1}{2t+1},
\end{equation}
for $t \in [0,2]$ which implies that the first input is more important than the second if $t > 1/2$, while the opposite is true for $t \le 1/2$. 
Moreover, computing the integrals in equation~\eqref{eq:global_SA} leads to the global indices
\begin{equation}
\begin{aligned}
\Theta_1 &= \frac{\int_0^2 \frac{2t}{2t+1} \sqrt{\frac{2t+1}{8t}} \dd t}{\int_0^2 \sqrt{\frac{2t+1}{8t}} \dd t} = \frac{\int_0^2 \sqrt{\frac{4t}{2t+1}} \dd t}{\int_0^2 \sqrt{\frac{2t+1}{t}} \dd t} = \frac{4\sqrt5 - 2\log(2 + \sqrt5)}{4\sqrt5 + 2\log(2 + \sqrt5)} \simeq 0.512, \\
\Theta_2 &= \frac{\int_0^2 \frac1{2t+1} \sqrt{\frac{2t+1}{8t}} \dd t}{\int_0^2 \sqrt{\frac{2t+1}{8t}} \dd t} = \frac{\int_0^2 \sqrt{\frac1{t(2t+1)}} \dd t}{\int_0^2 \sqrt{\frac{2t+1}{t}} \dd t} = \frac{4\log(2 + \sqrt5)}{4\sqrt5 + 2\log(2 + \sqrt5)} \simeq 0.488.
\end{aligned}
\end{equation}

We then move to the models $\Q_1,\Q_2,\Q_3$, and $\Q_\HF,\Q_\LF$ in \cref{sec:numerical_MF,sec:numerical_comparison}, respectively. 
Both local and global indices are reported in~\cref{fig:SA2D}, where we also compare the results with the first order Sobol' sensitivity indices, even if we do not expect perfect agreement since our sensitivity indices are based on derivatives, while Sobol' indices are based on an analysis of variance.
For the last $\Q_\LF$ model, where the only significant variable is $x_2$ as shown in~\cref{fig:paperAS_manifold}, the indices are constantly equal to $0$ and $1$ for $x_1$ and $x_2$, respectively, as expected.
For the second model $\Q_2$, where one would expect equal importance for both the variables, we notice that our local and global indices slightly diverge from the value $0.5$. This is due to the fact that the indices are dependent on the single realization of the NeurAM. Nevertheless, for all the realizations, the values oscillate around $0.5$, implying that both the variables are important. Moreover, computing the average global indices over $100$ runs, we obtain $\bar\Theta_1 = 0.499$ and $\bar\Theta_2 = 0.501$, as expected.
For the other models, the importance of the input variables as computed from NeurAM and Sobol' indices is consistent, with the only exception of $\Q_1$ for which the interaction between the two components is more relevant. In particular, we observe a better agreement between the two types of indices for $\Q_2,\Q_3,\Q_\LF$, where the interaction between the variables is limited.
Our indices are therefore more suitable to correctly rank the variables in the presence of a negligible interaction.
Extensions of NeurAM to quantify sensitivity accounting for interaction among inputs are left to future work.

\begin{figure}[ht!]
\centering
\includegraphics{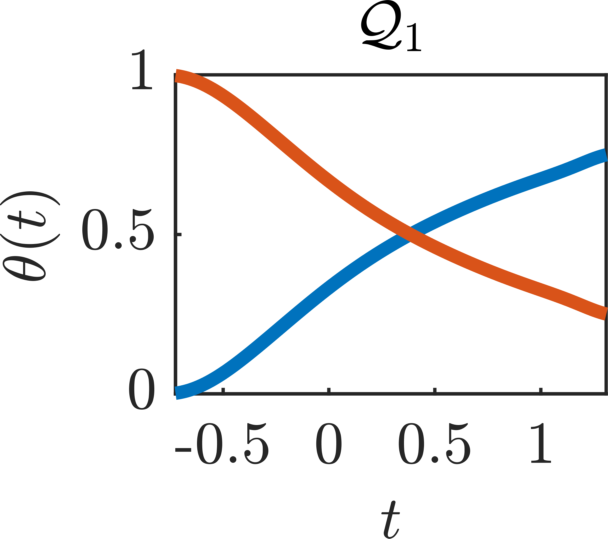}
\includegraphics{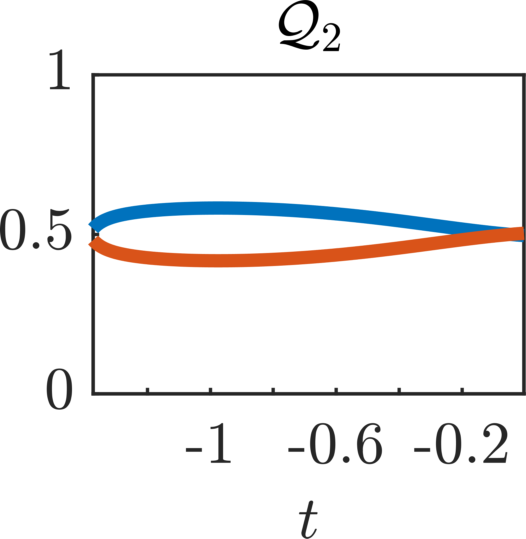}
\includegraphics{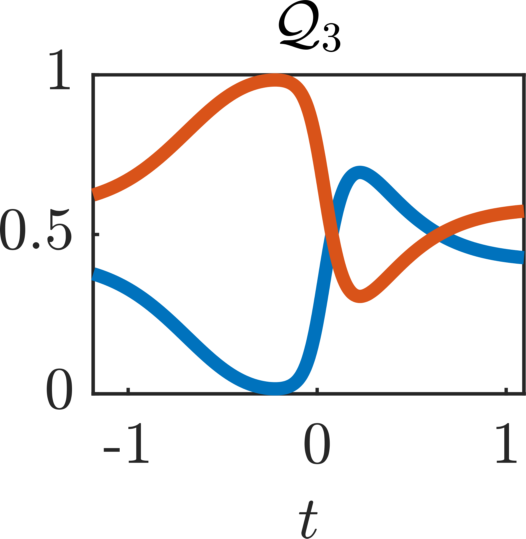}
\includegraphics{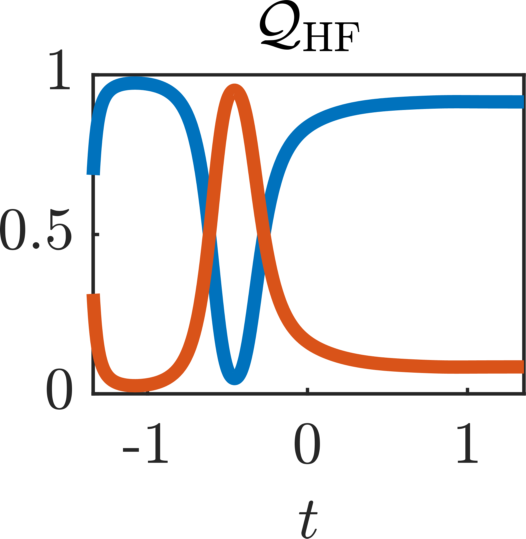}
\includegraphics{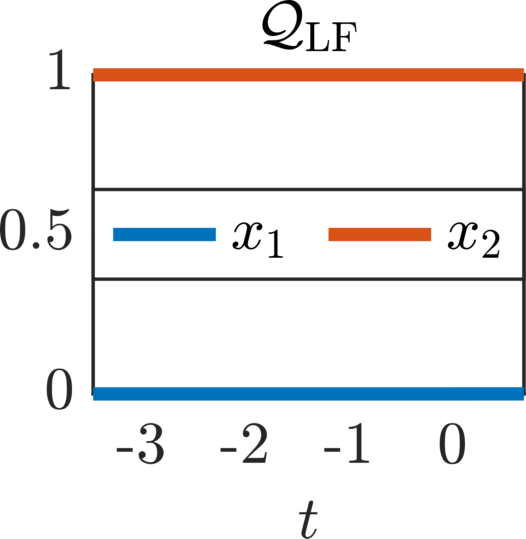}
\begin{center}
\small
\begin{tabular}{cccccc}
\toprule
& $\Q_1$ & $\Q_2$ & $\Q_3$ & $\Q_\HF$ & $\Q_\LF$ \\
\midrule
\begin{tabular}{c}
\\
\midrule
$\Theta$ \\
$S$
\end{tabular} 
& 
\begin{tabular}{cc}
$x_1$ & $x_2$ \\
\midrule
0.555 & 0.445 \\
0.176 & 0.762
\end{tabular} 
& 
\begin{tabular}{cc}
$x_1$ & $x_2$ \\
\midrule
0.558 & 0.442 \\
0.500 & 0.500
\end{tabular} 
& 
\begin{tabular}{cc}
$x_1$ & $x_2$ \\
\midrule
0.329 & 0.671 \\
0.320 & 0.680
\end{tabular} 
& 
\begin{tabular}{cc}
$x_1$ & $x_2$ \\
\midrule
0.625 & 0.375 \\
0.801 & 0.173
\end{tabular} 
& 
\begin{tabular}{cc}
$x_1$ & $x_2$ \\
\midrule
0 & 1 \\
0 & 1
\end{tabular} 
\\
\bottomrule
\end{tabular}
\end{center}
\caption{Sensitivity analysis for one realization of the NeurAM for the two-dimensional models $\Q_1, \Q_2, \Q_3, \Q_\HF, \Q_\LF$. Top: local indices $\theta(t)$ for both $x_1$ (blue) and $x_2$ (red). Bottom: global indices $\Theta$ compared with Sobol' indices $S$.}
\label{fig:SA2D}
\end{figure}

\subsection{Hartmann problem}

In this section we consider a more complex example, the Hartmann problem, which is a verification test case for magnetohydrodynamics applications. The Hartmann problem models a laminar flow of a conducting fluid between two parallel plates, while a magnetic field is applied in the transverse direction~\cite{SPC16}.  
This problems admits an analytical solution and has been used previously to test AS and AM in~\cite{GCS17,BGF19}. 
In particular, the two quantities of interest, i.e., the average flow velocity $u$ and the induced magnetic field $B$, are given by the following expression~\cite{GCS17,BGF19}
\begin{equation}
\begin{dcases}
u(\mu,\rho,\partial p_0 / \partial x,\eta, B_0) = - \frac{\partial p_0}{\partial x} \frac{\eta}{B_0^2} \left[ 1 - \frac{B_0 l}{\sqrt{\eta\mu}} \coth \left( \frac{B_0 l}{\sqrt{\eta \mu}} \right) \right], \\
B(\mu,\rho,\partial p_0 / \partial x,\eta,B_0) = \frac{\partial p_0}{\partial x}  \frac{l \mu_0}{2 B_0} \left[ 1 - 2 \frac{\sqrt{\eta\mu}}{B_0 l} \tanh \left( \frac{B_0 l}{2 \sqrt{\eta\mu}} \right) \right],
\end{dcases}
\end{equation}
where the magnetic constant $\mu_0 = 1$ and the length $l = 1$ are fixed, while the remaining parameters are log-uniformly distributed in the ranges given in~\cref{tab:parameters_Hartmann}.
Note that the fluid density $\rho$ does not actually appear in the equations for $u$ and $B$. 
We first normalize the input parameters in $[-1,1]^5$, and run our algorithm using $N = 1000$ training samples. 
Since we cannot visualize the NeurAM for a five-dimensional problem, \cref{fig:Hartmann_manifold} only shows a parity plot for both the average flow velocity $u$ and the induced magnetic field $B$.
We observe that the model variability is almost completely captured by NeurAM, but, unlike the two-dimensional models in the previous sections, a few points are observed to deviate from the identity.

We then compare the performance of our methodology with the numerical results for AS and AM reported in~\cite[Table 3]{BGF19}.
We run NeurAM $100$ times, and then compute the MAE and MSE of the errors $e_1$ and $e_2$ in~\eqref{eq:errors_def} at each iteration for both $u$ and $B$.
The comparative plot in \cref{fig:Hartmann_comparison} shows an improved performance of NeurAM for both QoIs, particularly the induced magnetic field $B$.
This can be justified in light of the more complex dependence between $B$ and the five inputs, whereas linear dimensionality reduction seems sufficient for $u$ (see, e.g., the sensitivity indices in \cref{fig:Hartmann_SA}).

We also use the optimally trained NeurAM for multifidelity propagation and sensitivity analysis. In this context, we assume the induced magnetic field $B$ and the average flow velocity $u$ to represent the low-fidelity and high-fidelity model response, respectively, and focus on computing the expectation $\bar u = \Ex^\nu[u]$, where $\nu$ denotes the joint distribution of the input parameters given in \cref{tab:parameters_Hartmann}. 
In order to mimic cost differences in realistic applications, we let $w = 0.01$ be the cost ratio $\mathcal C_\LF/\mathcal C_\HF$ between the two fidelities. 
We then compute $100$ estimates of $\bar u$ with a computational budget $\mathcal B = 1000$, using single-fidelity Monte Carlo and multifidelity Monte Carlo with the original low-fidelity model or using NeurAM to determine new sample locations according to~\eqref{eq:modified_QLF}.
In \cref{fig:Hartmann_MFMC} we show that the new low-fidelity sampling locations computed by NeurAM improve the correlation between $B$ and $u$, and in several cases reach the ideal correlation given in \cref{rem:ideal}. As a consequence, the resulting approximated distribution of the multifidelity Monte Carlo estimator with NeurAM has a significantly smaller variance than standard multifidelity Monte Carlo.

Finally, in \cref{fig:Hartmann_SA} we show that NeurAM can be used for sensitivity analysis. In particular, we plot the quantities $\theta_i$, $i=1,\dots,5$, given in equation \eqref{eq:local_SA} for each input. 
The input ranking varies along the manifold for $B$, while it remains approximately constant for $u$. For example, the dependence of $B$ on $B_0$ increases along the manifold, implying a smaller contribution of $\partial p_0 / \partial x$, which is the most important input parameter. 
On the other hand, we notice that the second most relevant parameter for $u$ is $\mu$, while the remaining inputs give a negligible contribution. 
Moreover, in both cases we observe that the index $\theta(t)$ for $\rho$ is constantly zero as expected, since it does not appear in the equations for $u$ and $B$. In \cref{fig:Hartmann_SA} we finally compare the global indices $\Theta_i$ with the first order Sobol' sensitivity indices $S_i$ computed evaluating the exact models over $2^{10}$ samples, showing agreement between the two rankings.

\begin{table}[ht!]
\begin{center}
\begin{tabular}{ccccccc}
\toprule
Parameter && Notation && Range && Unit \\
\midrule
Fluid viscosity && $\mu$ && $[0.05, 0.2]$ && $\mathrm{kg}$ $\mathrm{m}^{-1}$ $\mathrm{s}^{-1}$ \\
Fluid density && $\rho$ && $[1, 5]$ && $\mathrm{kg}$ $\mathrm{m}^{-3}$ \\
Applied pressure gradient && $\frac{\partial p_0}{\partial x}$ && $[0.5, 3]$ && $\mathrm{kg}$ $\mathrm{m}^{-2}$ $\mathrm{s}^{-2}$ \\
Resistivity && $\eta$ && $[0.5, 3]$ && $\mathrm{kg}$ $\mathrm{m}^3$ $\mathrm{s}^{-3}$ $\mathrm{A}^{-2}$ \\
Applied magnetic field && $B_0$ && $[0.1, 1]$ && $\mathrm{kg}$ $\mathrm{s}^{-2}$ $\mathrm{A}^{-1}$ \\
\bottomrule
\end{tabular}
\end{center}
\caption{Parameter space for the Hartmann problem.}
\label{tab:parameters_Hartmann}
\end{table}

\begin{figure}[ht!]
\centering
\includegraphics{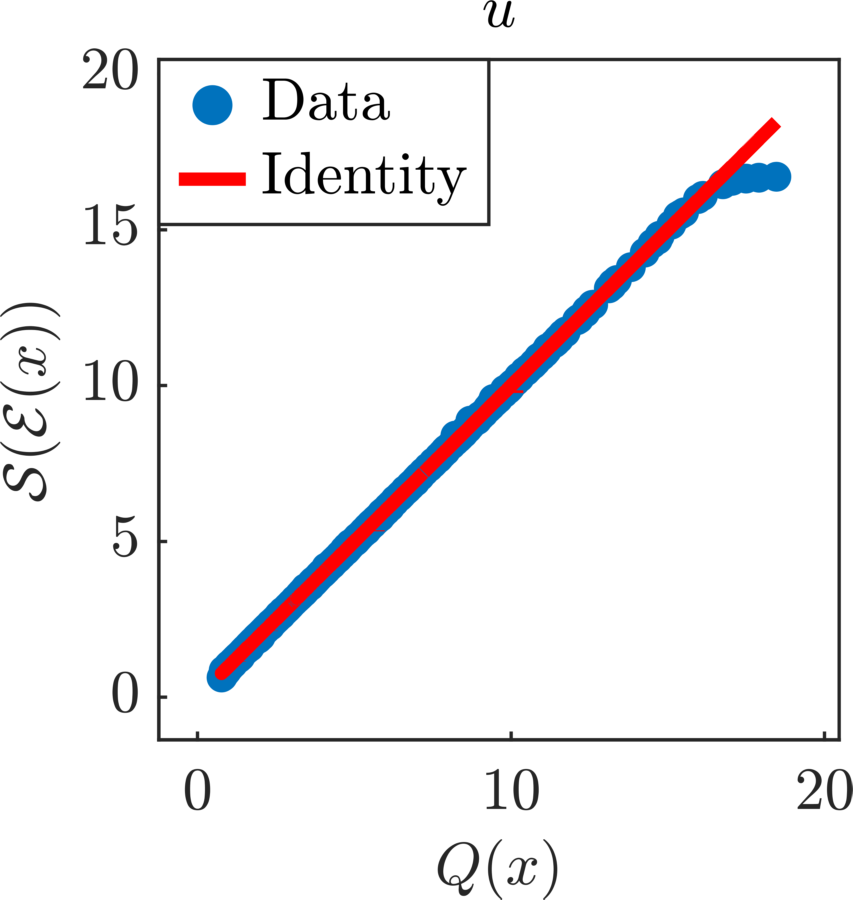} \hspace{2cm}
\includegraphics{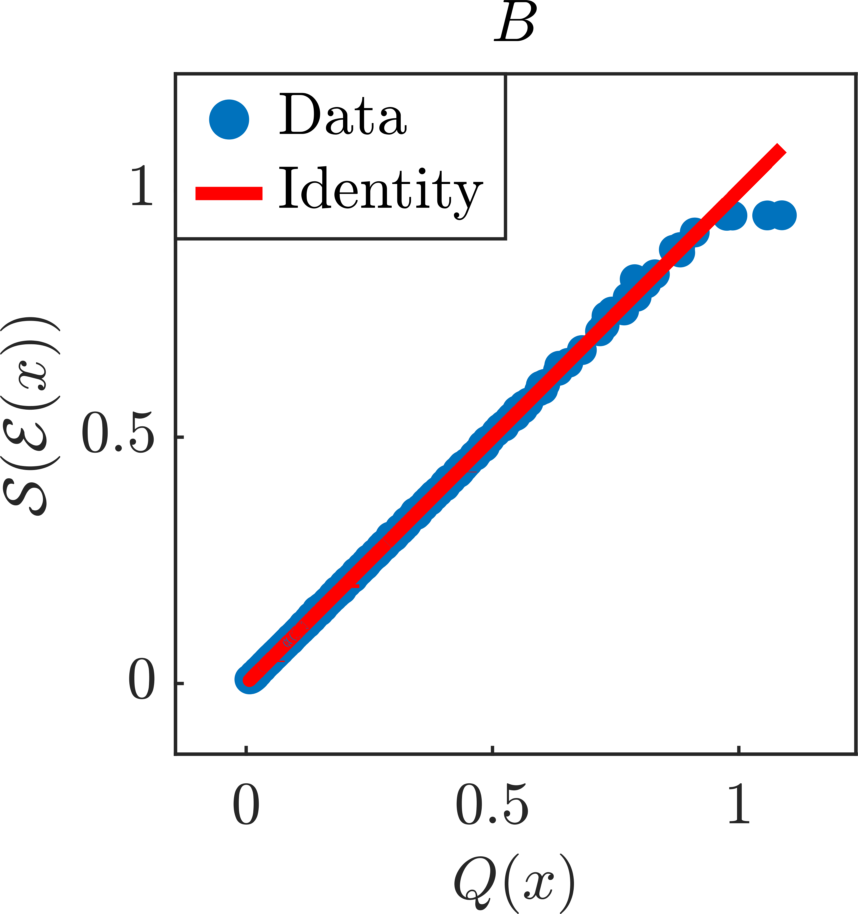}
\caption{Parity plot between the exact and surrogate models, for one realization of the NeurAM for the Hartmann problem. Left: average flow velocity $u$. Right: induced magnetic field $B$.}
\label{fig:Hartmann_manifold}
\end{figure}

\begin{figure}[ht!]
\centering
\includegraphics{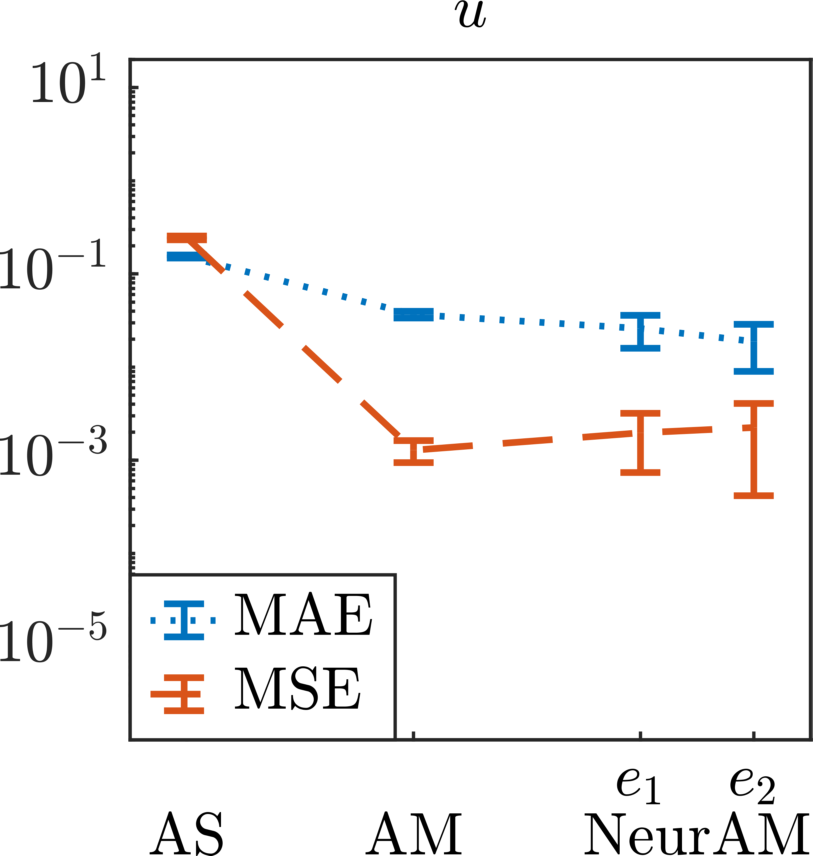} \hspace{2cm}
\includegraphics{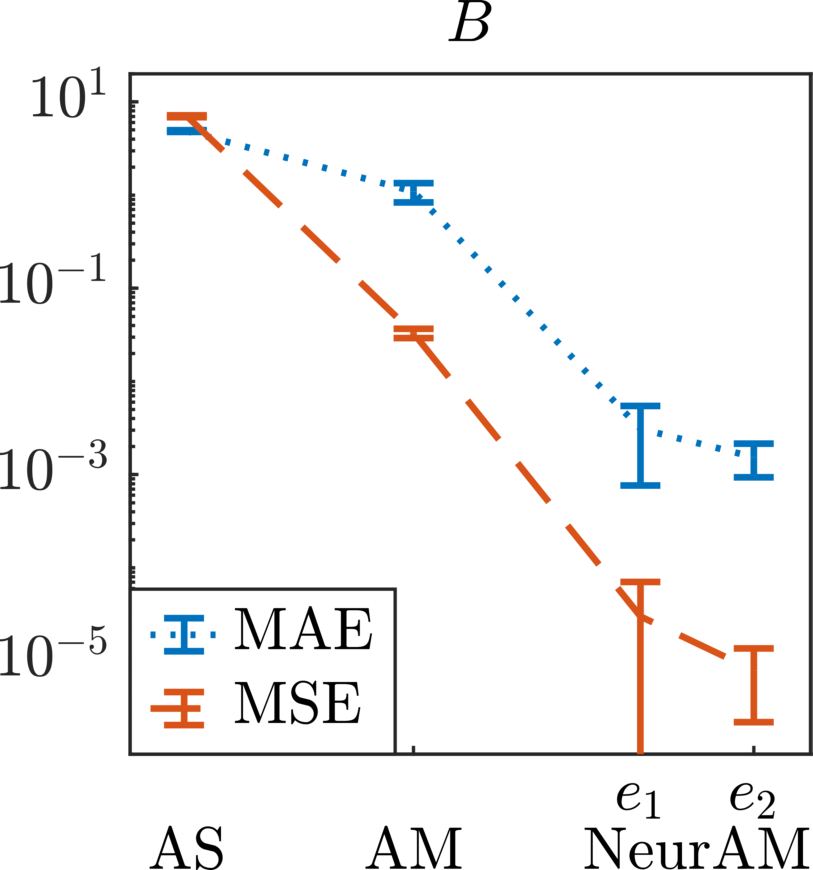} 
\caption{Comparison of approximation errors (MAE and MSE) between AS, AM and NeurAM for the Hartmann problem. The values for AS and AM are reported from \cite[Table 3]{BGF19}, and the errors $e_1$ and $e_2$ for NeurAM are defined in equation~\eqref{eq:errors_def}. Left: average flow velocity $u$. Right: induced magnetic field $B$.
}
\label{fig:Hartmann_comparison}
\end{figure}

\begin{figure}[ht!]
\centering
\includegraphics{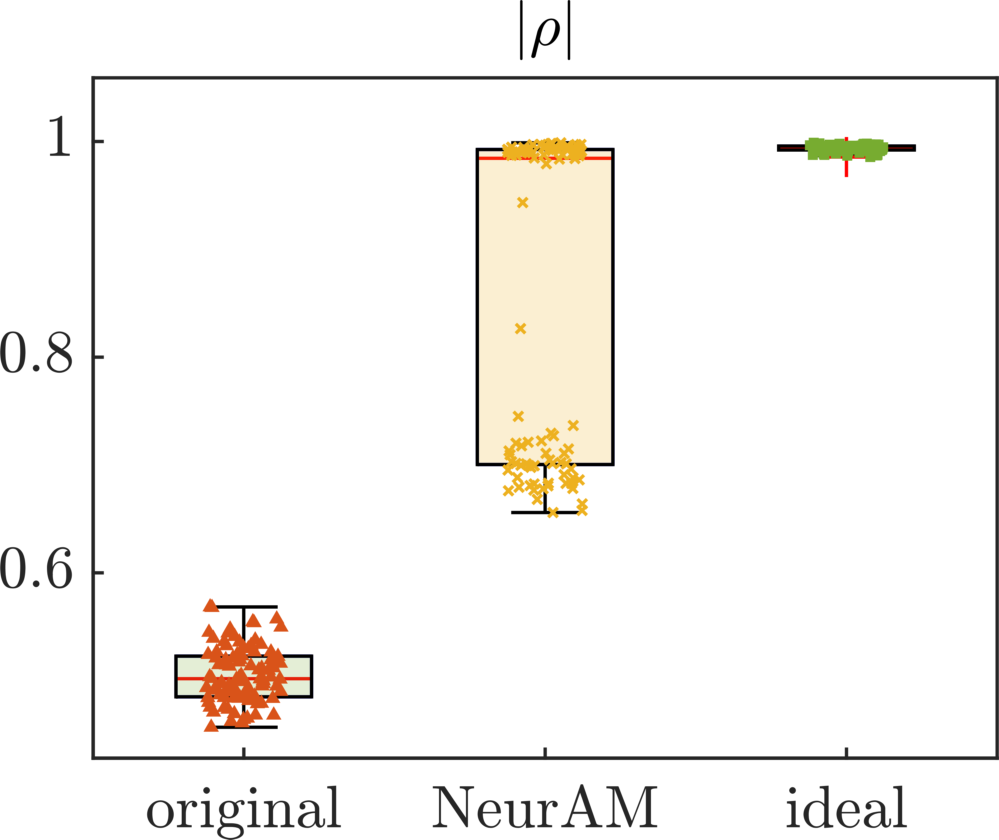} \hspace{1.5cm}
\includegraphics{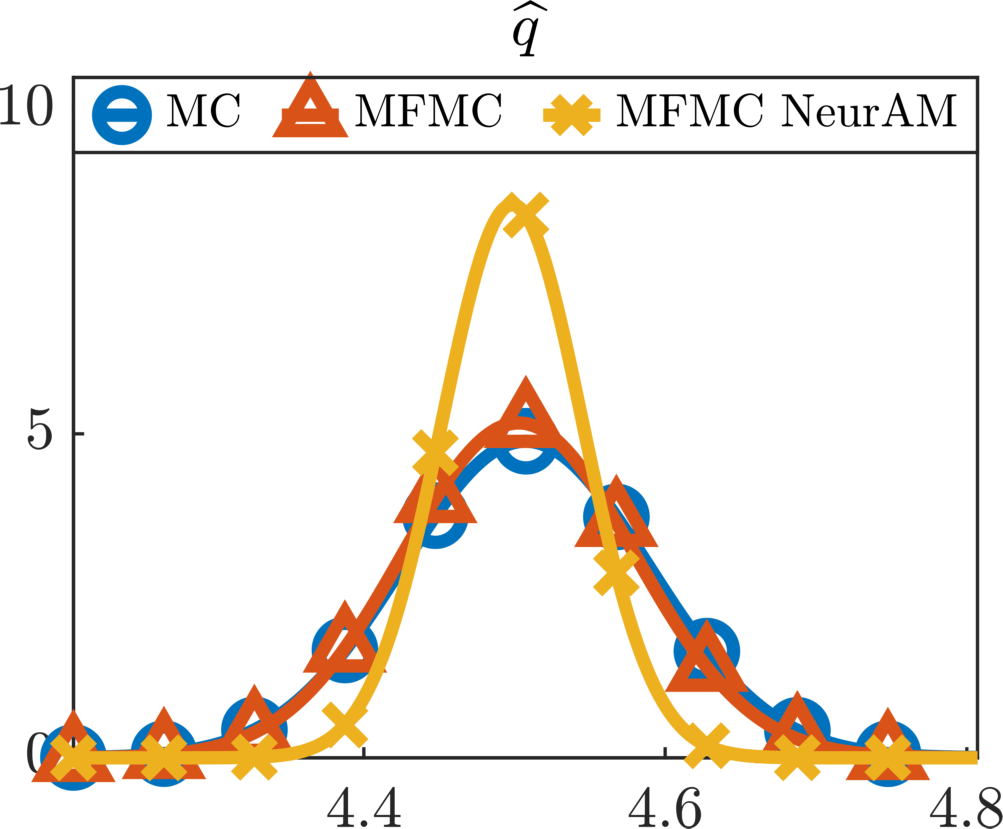}
\caption{Multifidelity uncertainty propagation for the Hartmann problem. Left: Pearson correlation coefficients. Right: approximated distributions for the single fidelity and multifidelity Monte Carlo estimators.}
\label{fig:Hartmann_MFMC}
\end{figure}

\begin{figure}[ht!]
\centering
\includegraphics{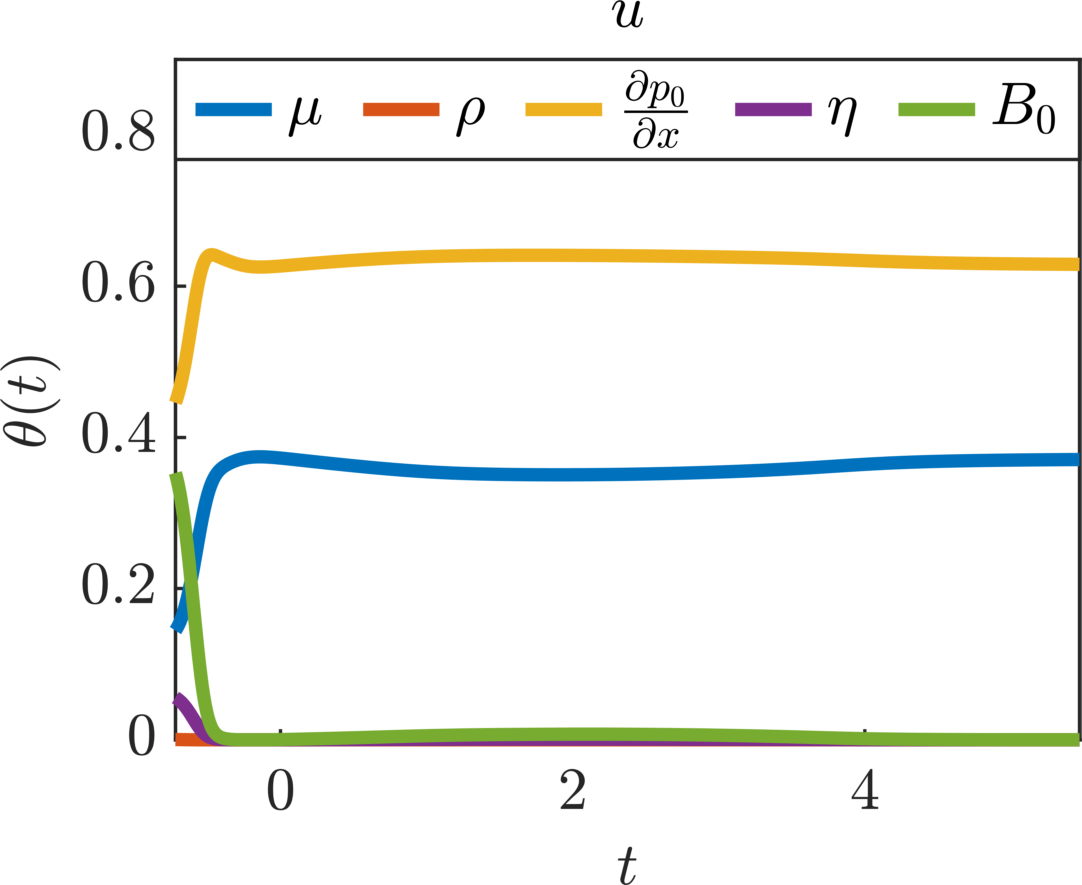} \hspace{1.5cm}
\includegraphics{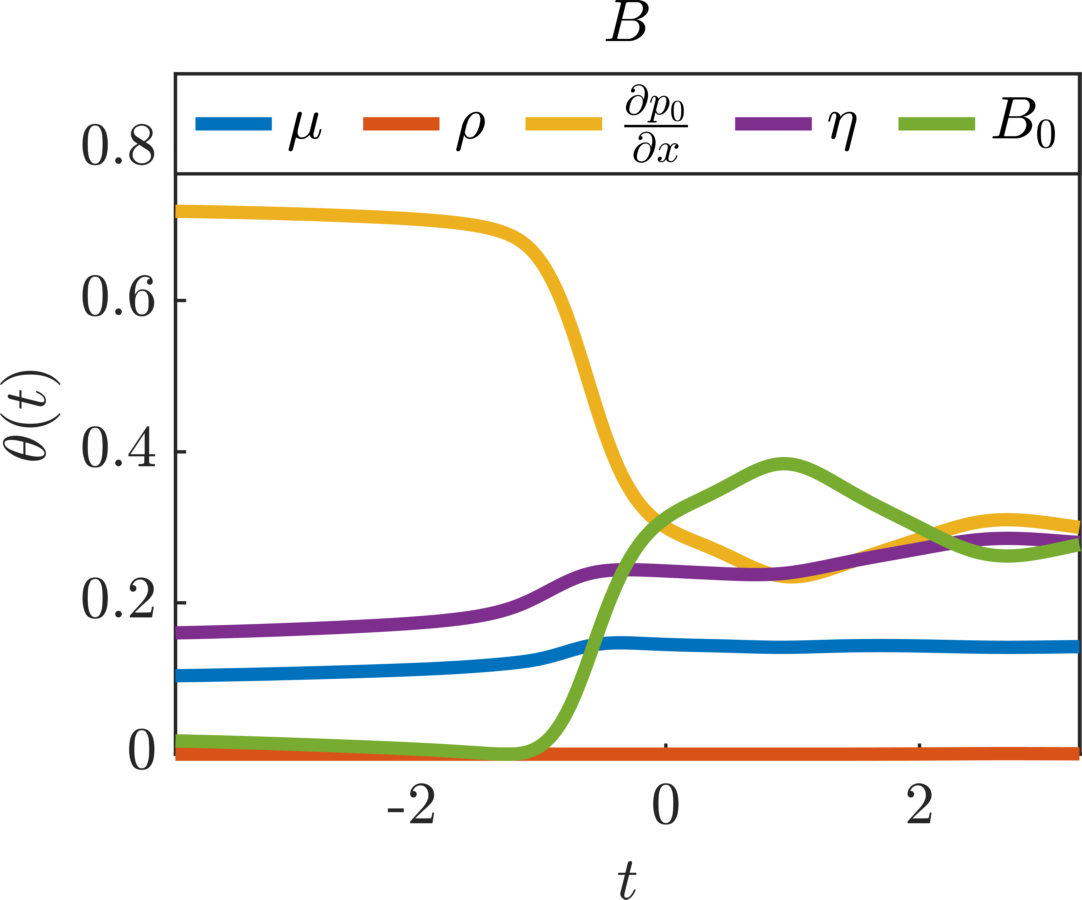}
\begin{center}
\small
\begin{tabular}{cc}
\toprule
$u$ & $B$ \\
\midrule
\begin{tabular}{cccccc}
& $\mu$ & $\rho$ & $\partial p_0 / \partial x$ & $\eta$ & $B_0$ \\
\midrule
$\Theta$ & $0.332$ & $2\mathrm{e-}5$ & $0.619$ & $0.007$ & $0.043$ \\
$S$ & $0.299$ & $0$ & $0.572$ & $0.001$ & $0.025$
\end{tabular} 
&
\begin{tabular}{cccccc}
& $\mu$ & $\rho$ & $\partial p_0 / \partial x$ & $\eta$ & $B_0$ \\
\midrule
$\Theta$ & $0.140$ & $4\mathrm{e-}5$ & $0.372$ & $0.238$ & $0.250$ \\
$S$ & $0.116$ & $0$ & $0.299$ & $0.199$ & $0.212$
\end{tabular} \\
\bottomrule
\end{tabular}
\end{center}
\caption{Sensitivity analysis for one realization of the NeurAM for the Hartmann problem. Left: average flow velocity $u$. Right: induced magnetic field $B$. Top: local indices $\theta(t)$ from NeurAM. Bottom: global NeurAM ($\Theta$) and Sobol' ($S$) sensitivity indices.}
\label{fig:Hartmann_SA}
\end{figure}

\subsection{Cardiac electrophysiology}

In this section we consider a more complex model which cannot be expressed as an analytic function of the input parameters. In particular, we model cardiac electrophysiology in a two-dimensional square $\Omega = [0, L]^2$ of length $L = 250$ mm representing a slab of cardiac tissue using the biophysically detailed monodomain equation \cite{Qua19,CPS14}, coupled with the ten Tusscher-Panfilov ionic model \cite{TEP06}. We formulate the model as follows 
\begin{eqnarray} \label{eq:cardiac_model}
\left\{\begin{array}{ll}
\displaystyle
\frac{\partial \Pot}{\partial t}+\Iion(\Pot,\Ionic)
-\nabla\cdot(\Di I \nabla \Pot)=\Iapp(\omega,\tau; \tboxstim) & \mbox{ in }\Omega\times(0,T],\\[2mm]
(\DiffTens\nabla \Pot)\cdot {n}=0  & \mbox{ on }\partial\Omega\times(0,T],\\[2mm]
\displaystyle
\frac{d\Ionic}{dt}=\RhsIonic(\Pot,\Ionic; \GNa, \GCaL, \GKr, \GKs)
& \mbox{ in }\Omega\times(0,T],\\[2mm]
\displaystyle
\Pot(\omega,0)=\Pot_0(\omega),\
\Ionic(\omega,0)=\Ionic_0(\omega) &  \mbox{ in }\Omega.
\end{array}\right.
\end{eqnarray}
The transmembrane potential $\Pot$ describes the propagation of the electric signal over the two-dimensional slab of cardiac tissue, the vector $\Ionic=(y_1,\ldots,y_{M+P})$ defines the probability density functions of $M=12$ gating variables, which represent the fraction of open channels across the membrane of a single cardiomyocyte, and the concentrations of $P=6$ relevant ionic species. Among them, sodium $Na^{+}$, intracellular calcium $Ca^{2+}$ and potassium $K^{+}$ play an important role in the physiological processes \cite{BGR15} dictating heart rhythmicity and sarcomere contractility \cite{BFR09}. The right hand side $\RhsIonic(\Pot,\Ionic)$ defines the dynamics of the gating and concentration variables. The ionic current $\Iion(\Pot,\Ionic)$ models the impact of the system of ordinary differential equations at the cellular, microscopic scale on the tissue, macroscopic scale coming for the first partial differential equation. The analytical expressions of both $\RhsIonic(\Pot,\Ionic)$ and $\Iion(\Pot,\Ionic)$ derive from the mathematical formulation of the ten Tusscher-Panfilov ionic model \cite{TEP06}. The right-hand side $\RhsIonic$ depends also on the maximal $Ca^{2+}$ and $Na^{+}$ current conductances $\GCaL$ and $\GNa$, and the maximal rapid and slow delayed rectifier current conductances $\GKr$ and $\GKs$. The diffusion tensor $\Di I$ defines an isotropic propagation of the electric signal driven by the homogeneous isotropic conductivity $\Di$. We impose the condition of an electrically isolated domain by prescribing homogeneous Neumann boundary conditions $\partial\Omega$, where $n$ is the outward unit normal vector to the boundary. The action potential is triggered by a uniform current $\Iapp(\omega,\tau; \tboxstim)$ that is applied at time $\tau = 0$ $\mathrm{s}$ on $\{0\} \times [0, 125]$, which is the left side of the square domain, followed by another stimulus at a variable time $\tau = \tboxstim$ on the $[0, 125]^2$. Finally, the maximal rapid and slow delayed rectifier current conductances are fixed at $\GKr = 0.153 \; \mathrm{nS}$ $\mathrm{pF}^{-1}$ and $\GKs = 0.392 \; \mathrm{nS}$ $\mathrm{pF}^{-1}$, respectively, and the final simulation time is $T = 2$ $\mathrm{s}$.

We perform space discretization of the model \eqref{eq:cardiac_model} using $\mathbb{P}_1$ finite elements. The tetrahedral meshes are comprised of 20353 cells and 10400 degrees of freedom, and the average mesh size is $h = 3.5$ mm. Regarding time discretization, we first update the variables of the ionic model and then the transmembrane potential by employing an implicit-explicit numerical scheme \cite{RSA22,PRS22,FPR23}. Specifically, in the monodomain equation, the diffusion term is treated implicitly and the ionic term is treated explicitly. Moreover, the ionic current is discretized by means of the ionic current interpolation scheme \cite{KSK13}.
We employ a time step size $\Delta t = 0.1$ ms with the forward Euler scheme.

In \cref{tab:parameters_cardiac} we report descriptions, ranges and units for the four model parameters.
We initially generate a dataset of 10000 electrophysiology simulations (5000 for training and 5000 for testing) by exploring the space of parameters with stratified sampling (latin hypercube sampling~\cite{MBC00}).
The quantity of interest $\Q(X)$ that is monitored by our method is the space integral of the transmembrane potential $\Pot$ at the final time $T$, i.e.,
\begin{equation}
\Q(X) = \int_{\Omega} u(\omega, T; X) \dd \omega,
\end{equation}
where
\begin{equation}
X = \begin{bmatrix} \GCaL & \GNa & \Di & \tboxstim \end{bmatrix}^\top \in \R^4
\end{equation}
is the vector of inputs. 
We remark that all the input parameters and the quantity of interest have been normalized in the interval $[-1,1]$. 
The parameter space covered by the model \eqref{eq:cardiac_model} enables numerical simulations exhibiting both sinus rhythm and sustained arrhythmia, as shown in \cref{fig:cardiac_evolution}. 
Indeed, this test case presents a \emph{bifurcation}, meaning that the response varies abruptly even for locations that are relatively close in the space of parameters (alternative ways to describe this behavior would be to say that the model response is \emph{biphasic} or characterized by sharp gradients), which leads to a challenging mathematical problem for the presented dimensionality reduction approach. 
The bifurcation is visible in \cref{fig:cardiac_classification}, where we plot the histogram of the quantity of interest, and notice that $\Q$ takes either values close to $-1$, with a very few exceptions in the interval $(-0.9,0)$ that are not visible in the histogram, or values approximately in the interval $(0.4,1)$. We therefore first perform a classification to divide the input space in two classes, based on whether the normalized quantity of interest is positive or negative, and then compute the NeurAM in each class separately. We build a neural network classifier using the training data, and then plot the results on the test set in \cref{fig:cardiac_classification}. We observe that the classifier is able to correctly discriminate between sinus rhythm and sustained arrhythmia in $97.68\%$ of the cases.

We now apply our methodology to learn the NeurAM for the two classes. In \cref{fig:cardiac_manifold}, where we plot the exact and surrogate models as functions of the reduced variable, we notice that the NeurAM is able to capture most of the variability of the model. We remark that, for the case of the sinus rhythm, the model takes only values in the neighborhood of $-1$, and therefore the surrogate model is almost constant. Note that the plot on the left of \cref{fig:cardiac_manifold} might be misleading, as almost all the points are concentrated around $(-1,1)$, and only a few points are left out from the identity line and give a nonnegligible error. Regarding the case of sustained arrhythmia, where there is more variability, the surrogate model is able to provide a reliable approximation, but it is not as accurate as in the test cases in \cref{sec:2Dmodels} due to the increased complexity of the problem and the limited computational budget given by the computational cost of the numerical simulations. In \cref{tab:cardiac_error} we report the MAE and the MSE for the error $e_2$ in equation \eqref{eq:errors_def} given by the surrogate model. We remark that in this example we cannot compute the error $e_1$ given by the dimensionality reduction because we would need to run additional simulations in the projected points, which are not in the original dataset. We notice that the errors in \cref{tab:cardiac_error}, compared with the standard deviation $\sigma$ of the model over the input domain, are particularly small for the single classes, meaning that NeurAM gives accurate surrogate models for each class. We also notice that the overall error, obtained first applying the classifier and then the surrogate model of the predicted class, is slightly larger. This is due to the fact that a wrong classification yields a large error for the particular sample, implying a larger overall error contribution. Nevertheless, the derived surrogate model still provides a reliable approximation of the real model since the majority of the data is correctly classified.

We finally perform a sensitivity analysis for the cardiac electrophysiology model, considering both classes of sinus rhythm and sustained arrhythmia. The local sensitivity indices $\theta_i(t)$ and the global sensitivity indices $\Theta_i$ are illustrated in \cref{fig:cardiac_SA} for all the parameters $i$. We first notice that all the variables appear to be unimportant in the case of sinus rhythm, except for $\tboxstim$, which is the only one that affects the quantity of interest for the parameter space explored in \cref{tab:parameters_cardiac}, since it is crucial to distinguish this physiological state from the onset of possible rhythm disorders \cite{SFA21}. Regarding the case of sustained arrhythmia, we observe how the relevance of each input parameter varies along the NeurAM. In particular, we notice that, as reflected by the global indices in the table of \cref{fig:cardiac_SA}, $\GCaL$ is the most important input parameter, since it plays an important role in arrhythmogenesis \cite{BGR15}, followed by $\tboxstim$, $\Di$ and $\GNa$, respectively. Moreover, from the dynamic indices, we deduce that $\tboxstim$ is more relevant in the first and last part of the NeurAM, while $\Di$ is more important in the middle part.

\begin{table}[ht!]
\begin{center}
\begin{tabular}{cccc}
\toprule
Parameter & Notation & Range & Unit \\
\midrule
Stimulation time & $\tboxstim$ & [300, 400] & $\mathrm{ms}$ \\
Isotropic conductivity & $\Di$ & [0.2114, 0.4757] & $\mathrm{mm}^{2}$ $\mathrm{ms}^{-1}$ \\
Maximal $Ca^{2+}$ conductance & $\GNa$ & [9.892, 22.2570] & $\mathrm{nS}$ $\mathrm{pF}^{-1}$ \\
Maximal $Na^{+}$ conductance & $\GCaL$ & [2.653e-05, 5.970e-05] & $\mathrm{cm}^3$ $\mathrm{ms}^{-1}$ $\mu \mathrm{F}^{-1}$ \\
\bottomrule
\end{tabular}
\caption{Input parameter space for the cardiac electrophysiology model.}
\label{tab:parameters_cardiac}
\end{center}
\end{table}

\begin{figure}[ht!]
\centering
\begin{tabular}{cccccc}
$T = 0.2 \; \mathrm{s}$ & $T = 0.4 \; \mathrm{s}$ & $T = 0.6 \; \mathrm{s}$ & $T = 0.8 \; \mathrm{s}$ & $T = 1 \; \mathrm{s}$ & $T = 2 \; \mathrm{s}$ \\
\includegraphics[scale=0.055]{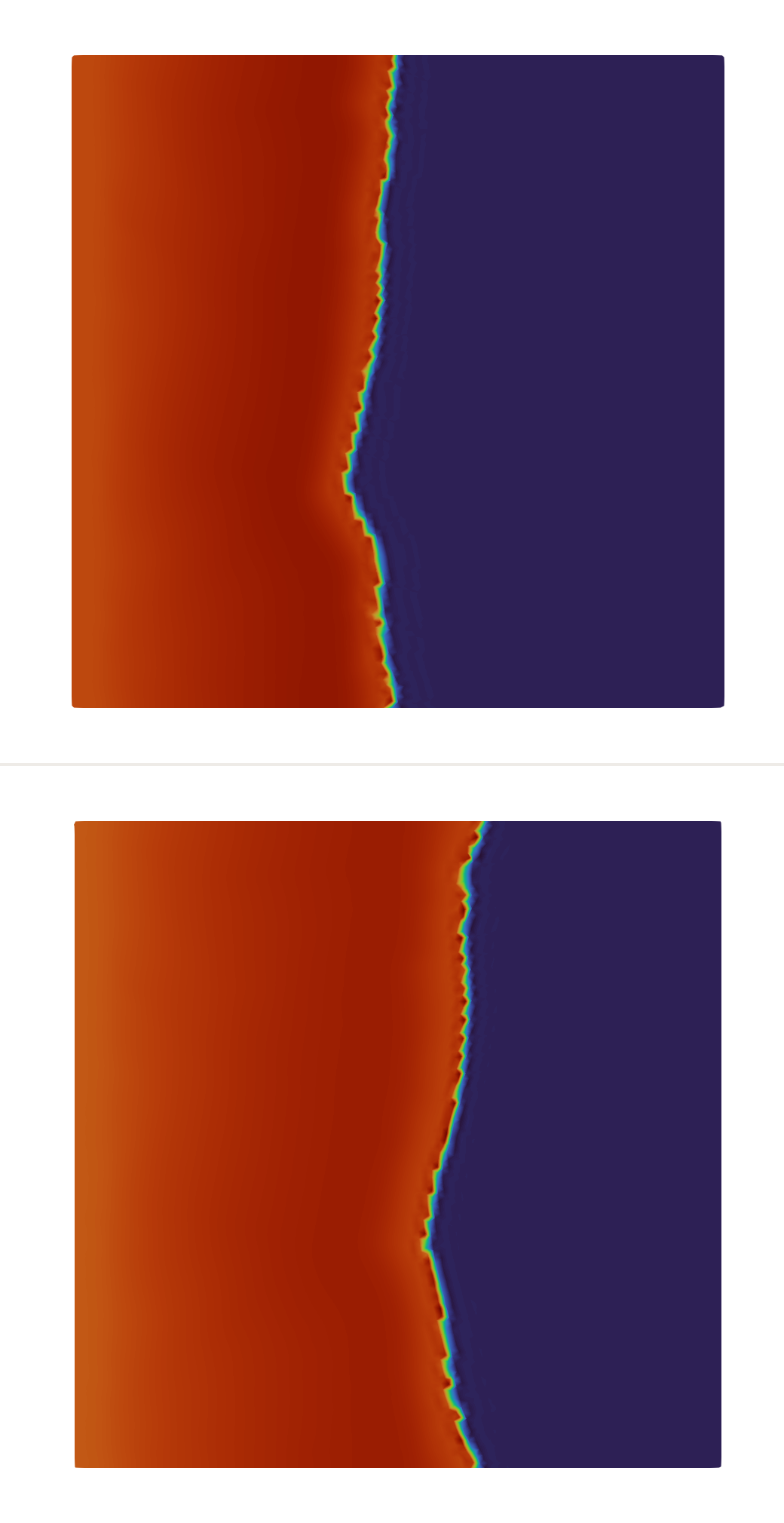} &
\includegraphics[scale=0.055]{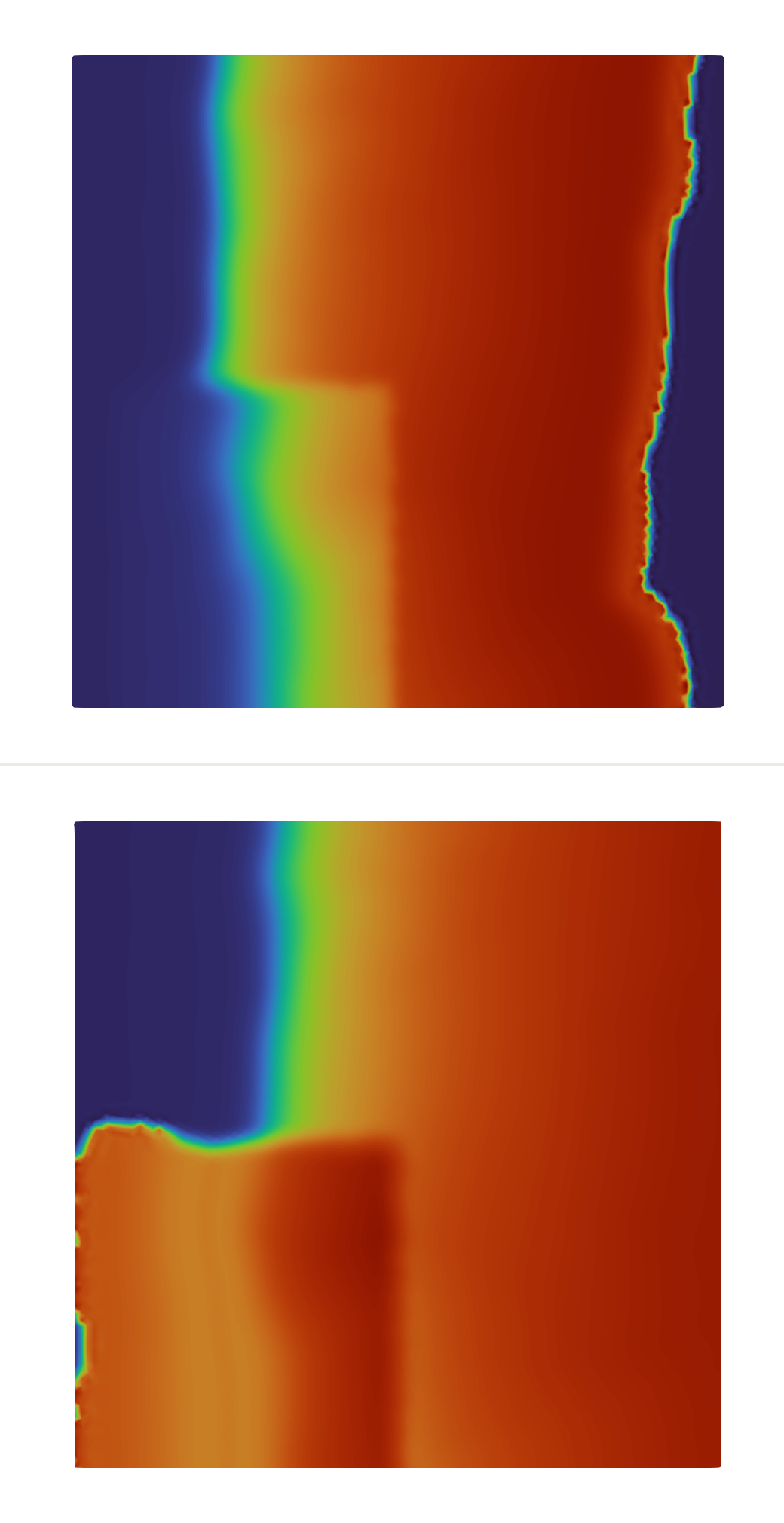} &
\includegraphics[scale=0.055]{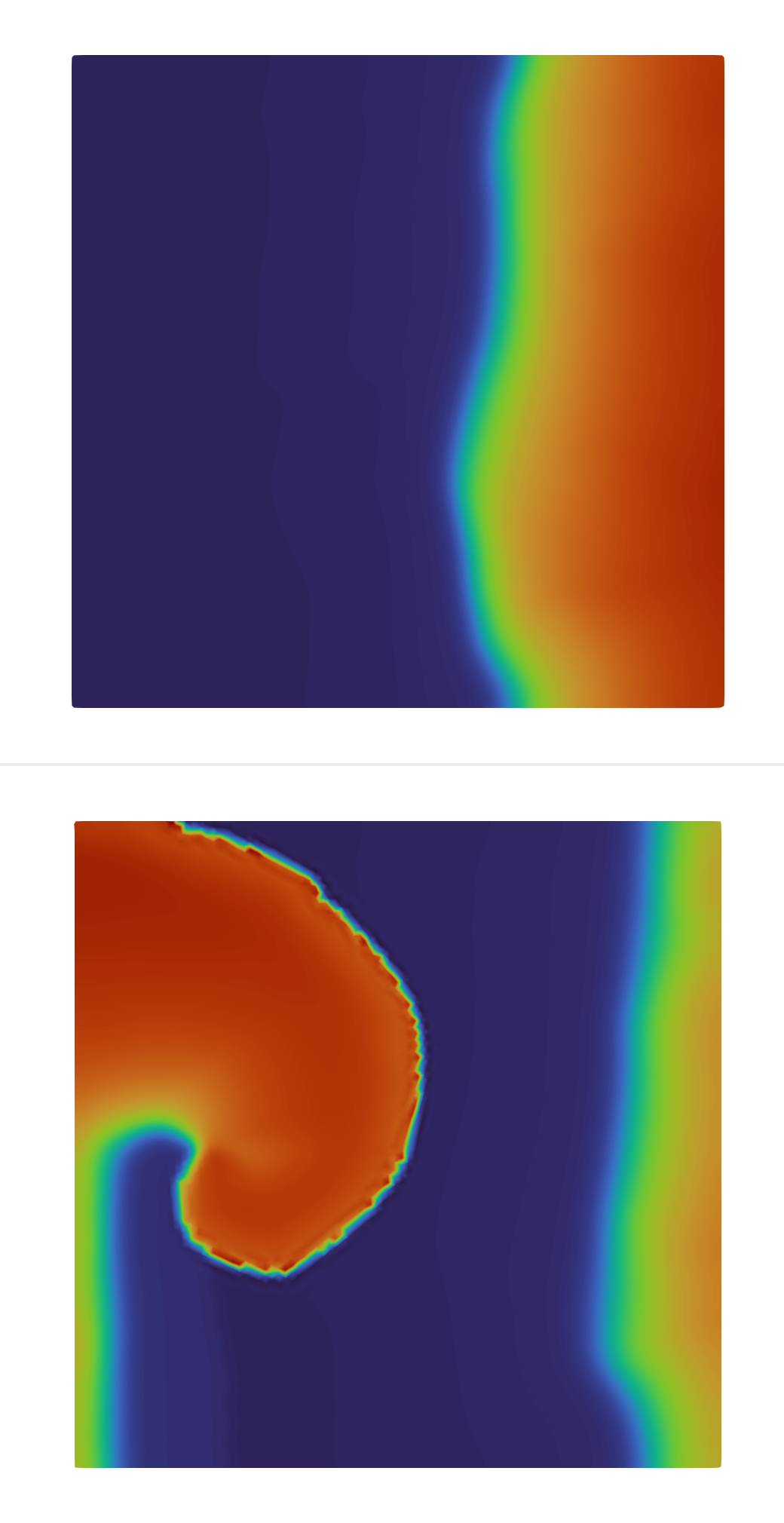} &
\includegraphics[scale=0.055]{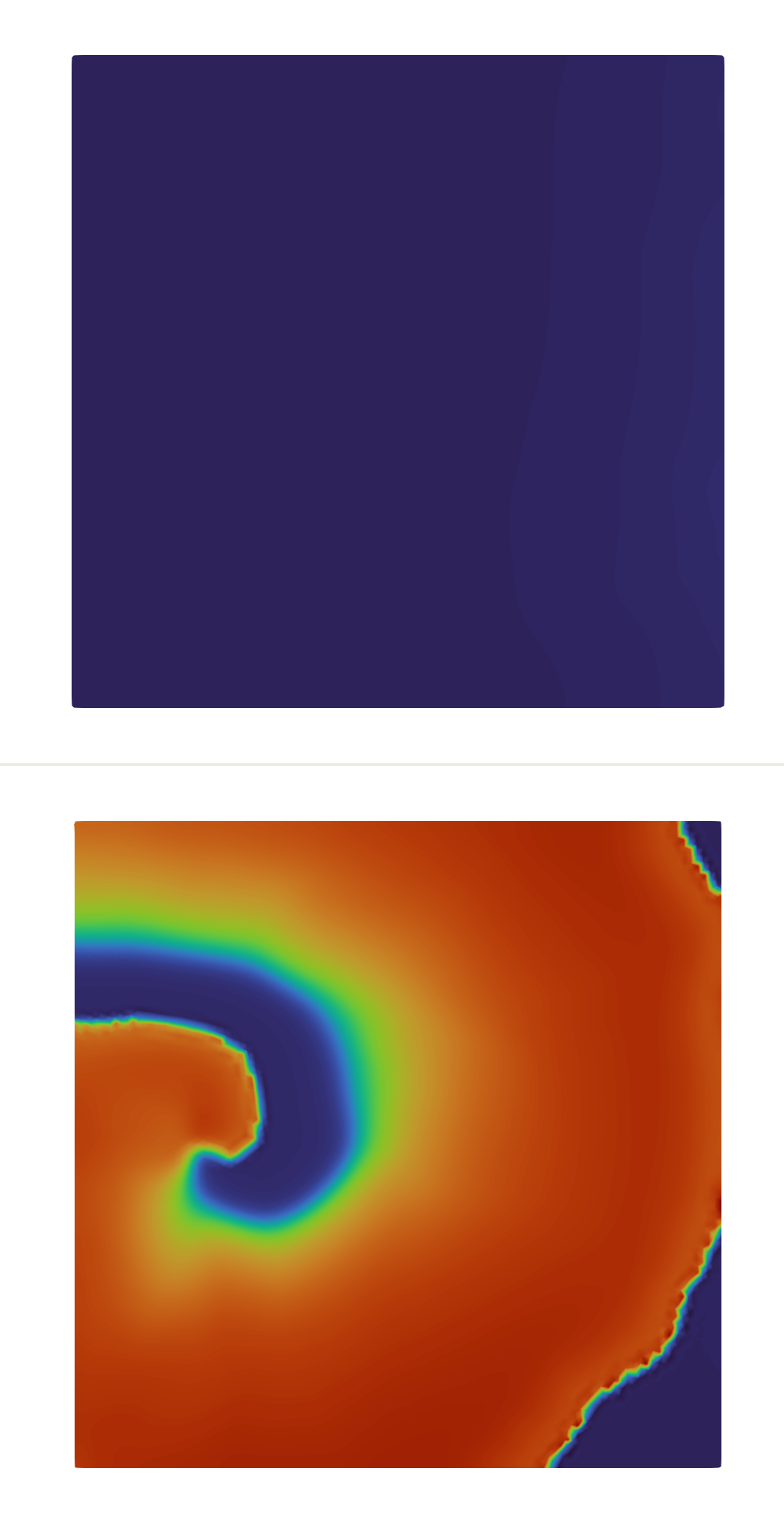} &
\includegraphics[scale=0.055]{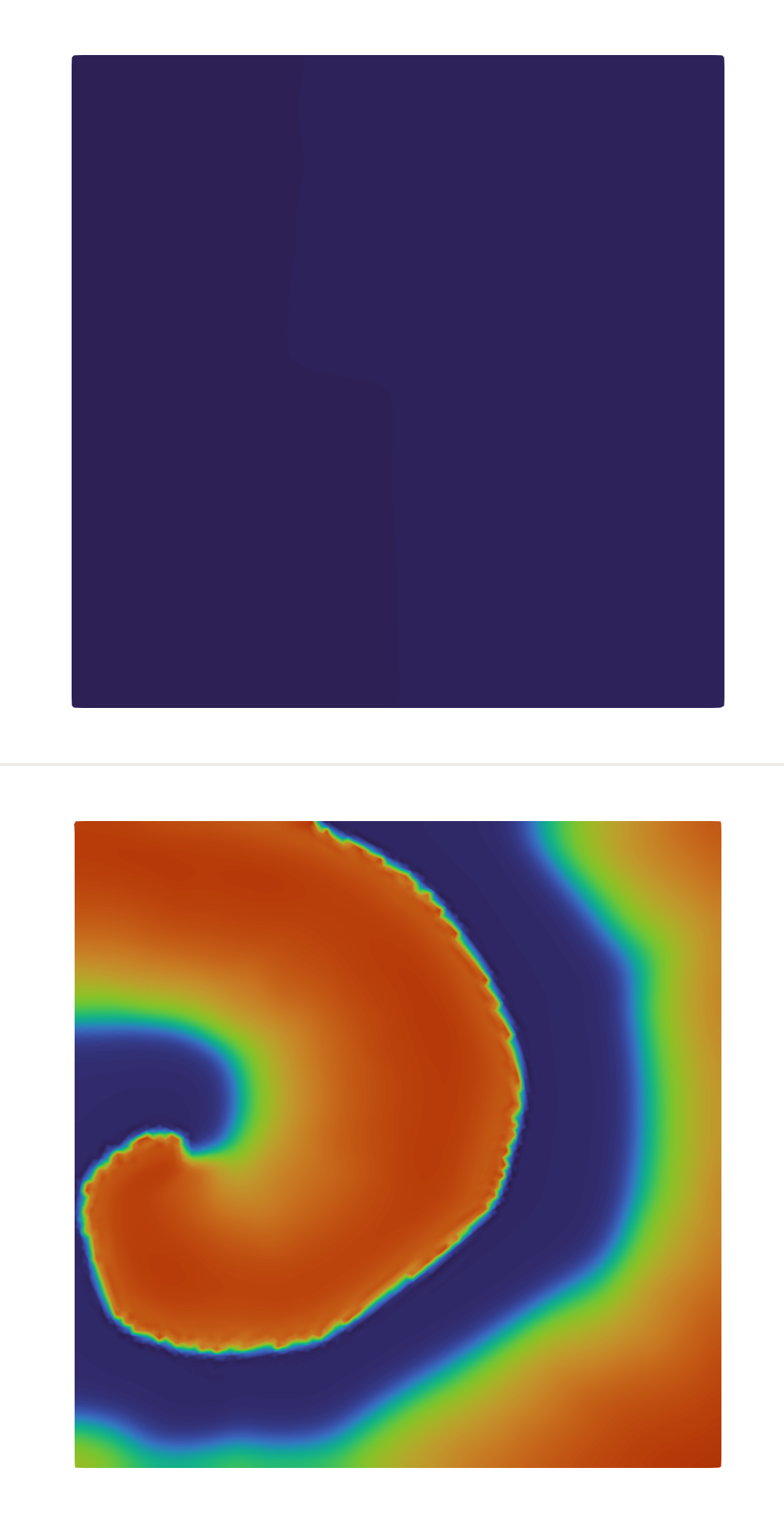} &
\includegraphics[scale=0.055]{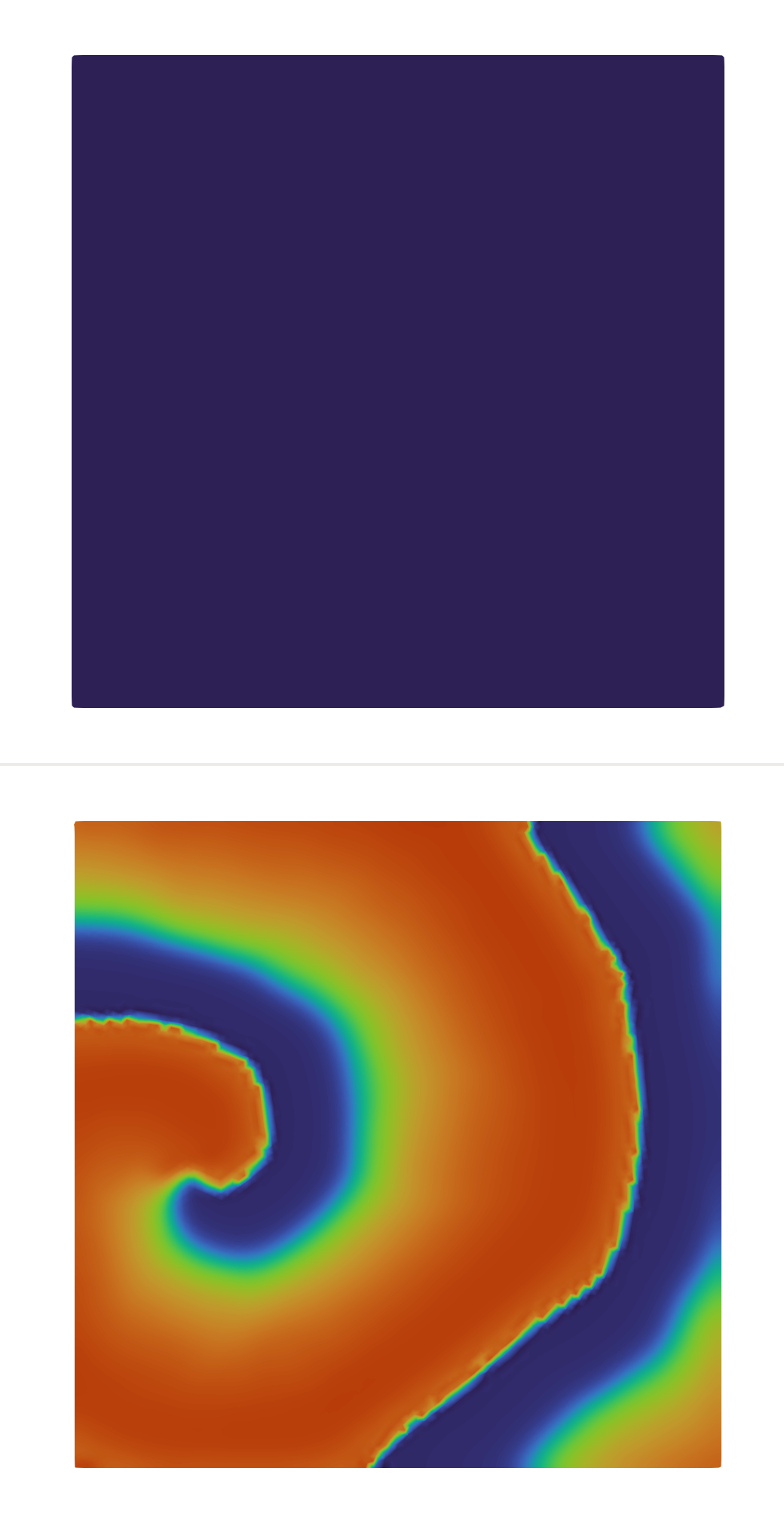}
\end{tabular}
\caption{Examples of evolution in time of the transmembrane potential $u$ in the cardiac electrophysiology model, for different time instants in the numerical simulation. Top: sinus rhythm. Bottom: sustained arrhythmia.}
\label{fig:cardiac_evolution}
\end{figure}

\begin{figure}[ht!]
\centering
\includegraphics{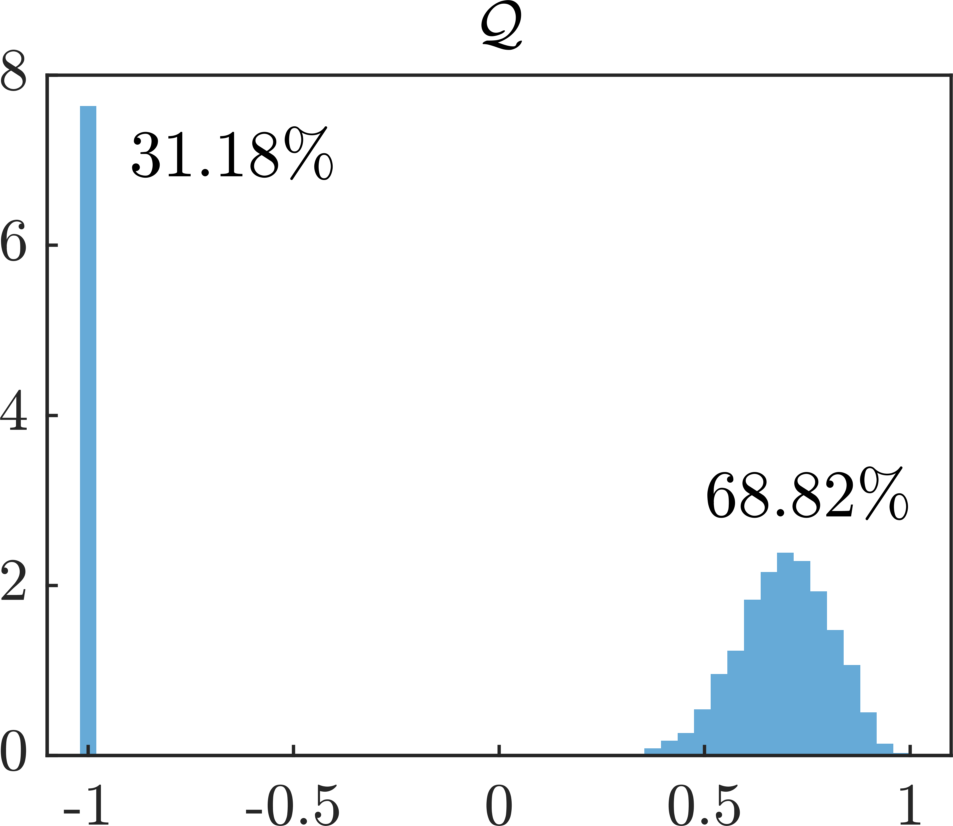} \hspace{1.5cm}
\includegraphics{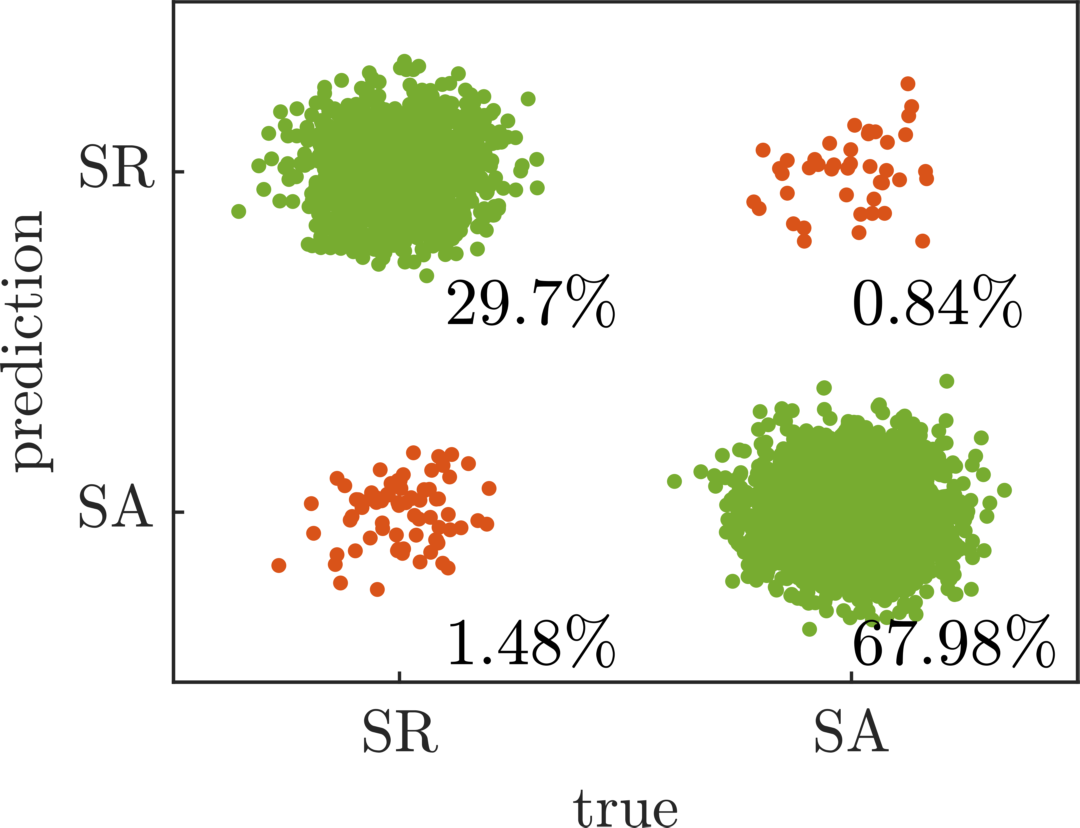}
\caption{Classification of cardiac electrophysiology response. Left: histogram of the quantity of interest $\Q$. Right: results from the classification algorithm.}
\label{fig:cardiac_classification}
\end{figure}

\begin{figure}[ht!]
\centering
\includegraphics{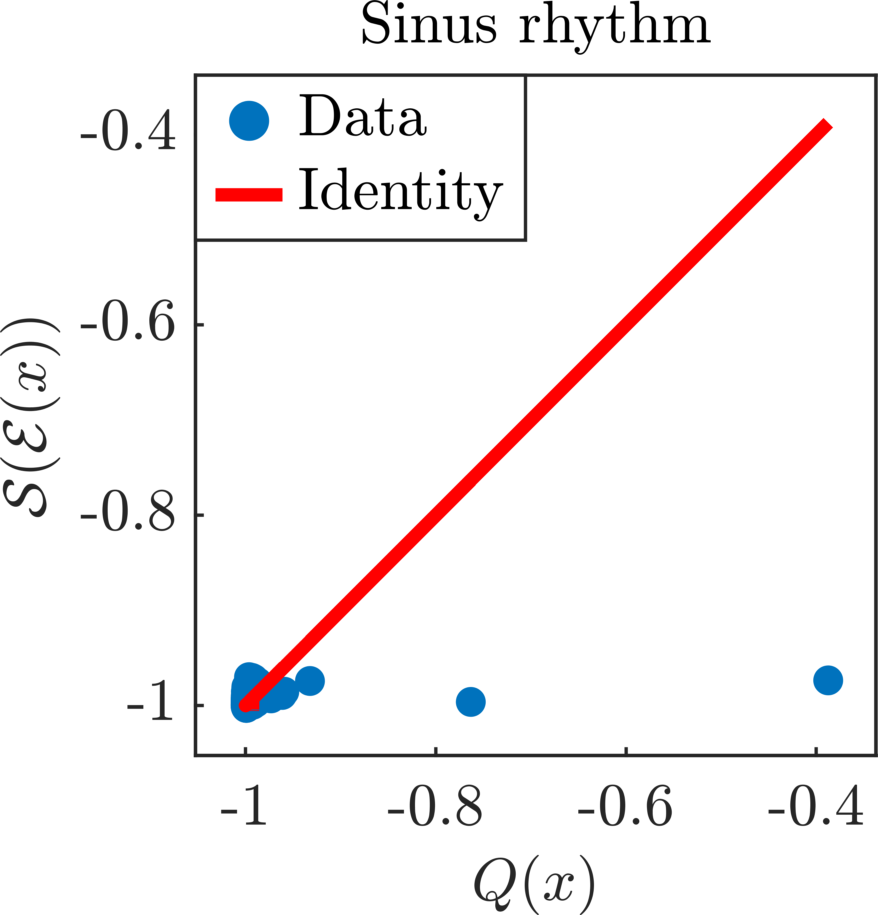} \hspace{2cm}
\includegraphics{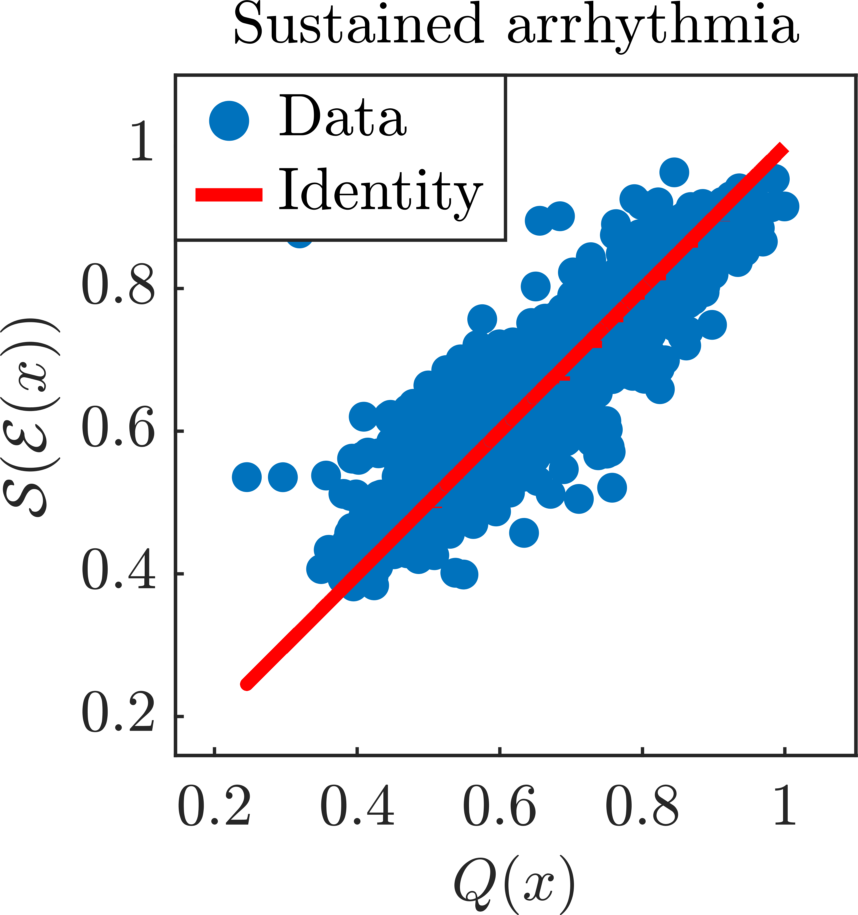}
\caption{Parity plot between the exact and surrogate models, for one realization of the NeurAM for the cardiac electrophysiology model. Left: sinus rhythm. Right: sustained arrhythmia.}
\label{fig:cardiac_manifold}
\end{figure}

\begin{table}[ht!]
\centering
\begin{tabular}{ccc}
\toprule
Sinus rhythm & Sustained arrhythmia & Overall \\
\midrule
\begin{tabular}{ccc}
MAE & MSE & $\sigma$ \\
\midrule
0.0002 & 0.0039 & 0.0240
\end{tabular} 
&
\begin{tabular}{ccc}
MAE & MSE & $\sigma$ \\
\midrule
0.0018 & 0.0293 & 0.1127
\end{tabular} 
&
\begin{tabular}{ccc}
MAE & MSE & $\sigma$ \\
\midrule
0.0689 & 0.0602 & 0.7905
\end{tabular} \\
\bottomrule
\end{tabular}
\caption{Approximation error (MAE and MSE) of the surrogate model, compared with the standard deviation $\sigma$ of the original model, for both sinus rhythm and sustained arrhythmia in the cardiac electrophysiology model. In the third column the overall error is computed employing the classifier as a first step.}
\label{tab:cardiac_error}
\end{table}

\begin{figure}[ht!]
\centering
\includegraphics{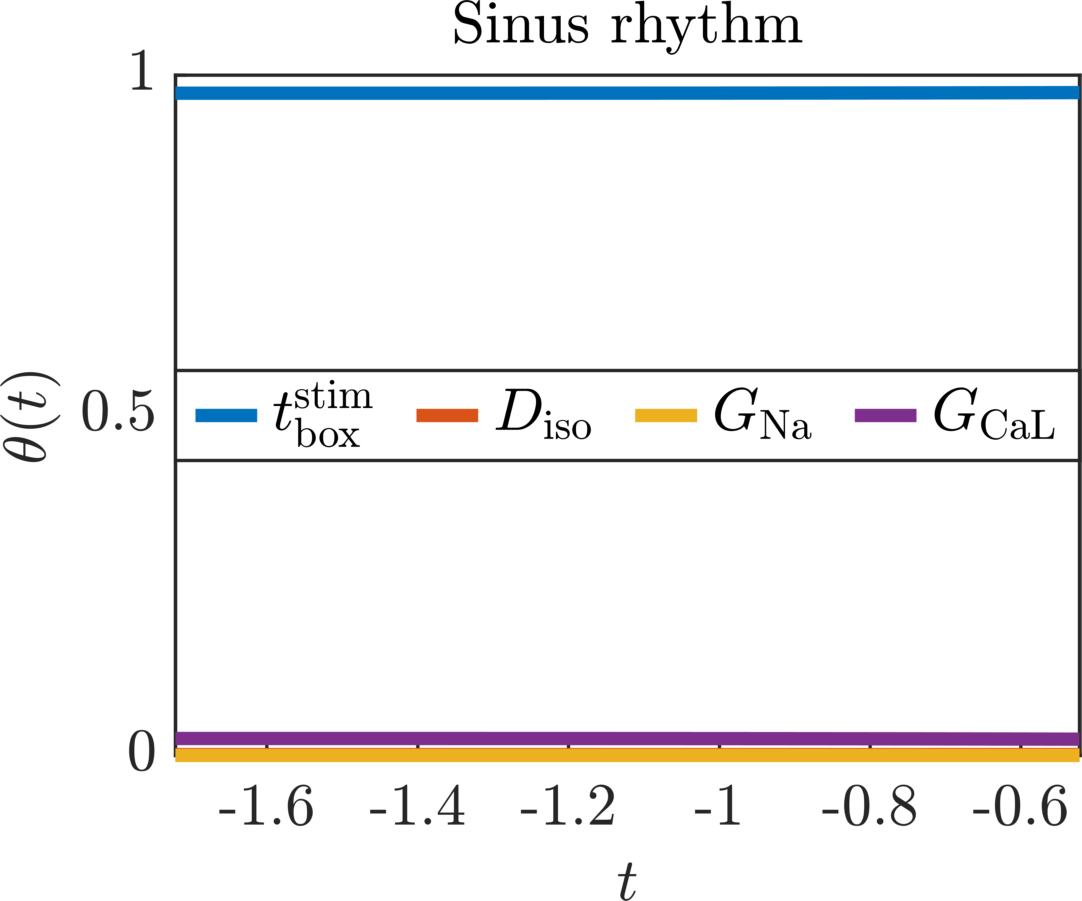} \hspace{1.5cm}
\includegraphics{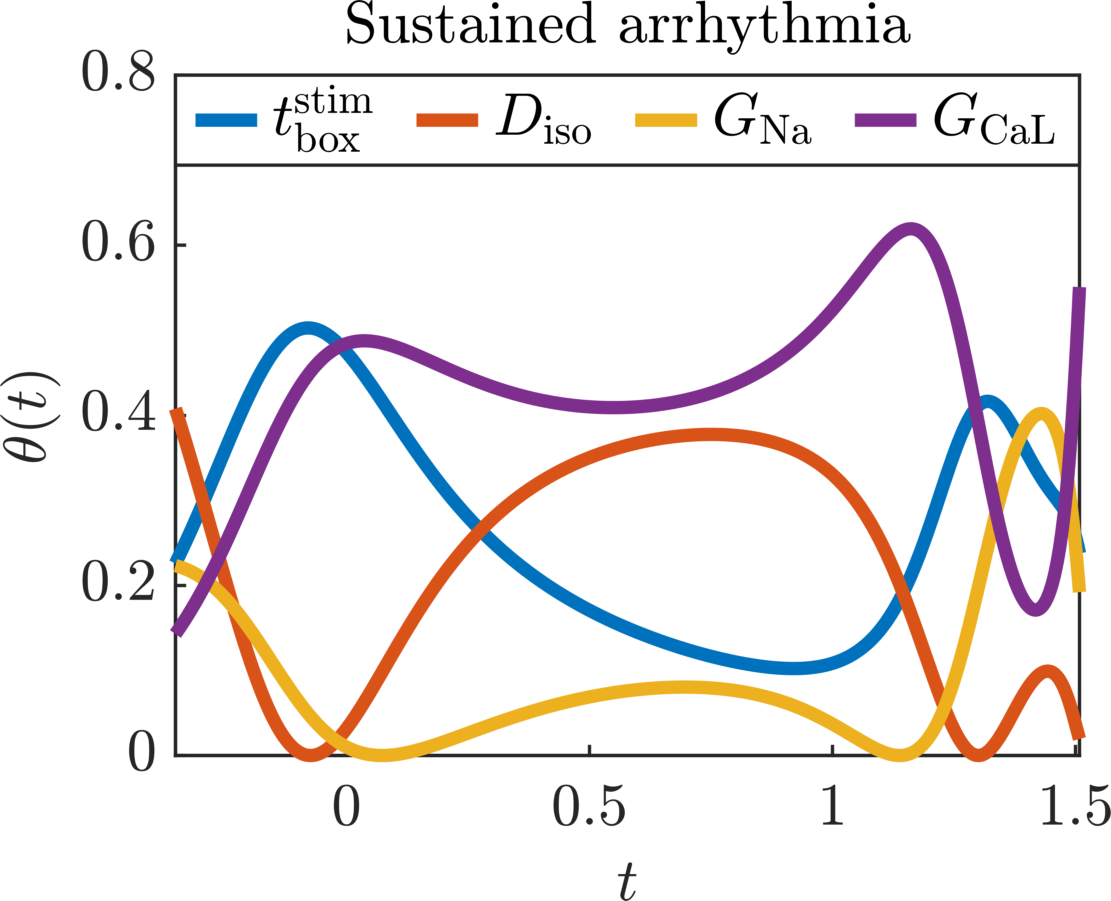}
\begin{center}
\small
\begin{tabular}{cc}
\toprule
Sinus rhythm & Sustained arrhythmia \\
\midrule
\begin{tabular}{ccccc}
& $\tboxstim$ & $\Di$ & $\GNa$ & $\GCaL$ \\
\midrule
$\Theta$ & $0.974$ & $0.001$ & $1\mathrm{e-}4$ & $0.024$
\end{tabular}
&
\begin{tabular}{ccccc}
& $\tboxstim$ & $\Di$ & $\GNa$ & $\GCaL$ \\
\midrule
$\Theta$ & $0.272$ & $0.192$ & $0.118$ & $0.418$
\end{tabular} \\
\bottomrule
\end{tabular}
\end{center}
\caption{NeurAM sensitivity analysis for the cardiac electrophysiology model. Left: sinus rhythm. Right: sustained arrhythmia. Top: local indices $\theta(t)$. Bottom: global indices $\Theta$.}
\label{fig:cardiac_SA}
\end{figure}

\section{Conclusion} \label{sec:conclusion}

In this work we introduced a novel technique for dimensionality reduction of computationally expensive models. 
In particular, we employed autoencoders with one-dimensional latent space to obtain a manifold in the parameter space (neural active manifold or NeurAM) capturing as much as possible of the model output variability. 
The autoencoder is combined with a surrogate model with inputs in the latent space that is trained at the same time as the encoder and decoder. 
NeurAM provides a concise one-dimensional analogue of the original computationally expensive model, and can be used to efficiently perform uncertainty quantification tasks. 
We first showed how NeurAM can be used to perform both local and global sensitivity analysis. In particular, we proposed new local indices providing a dynamic ranking of the input parameters along the manifold. Moreover, by integrating these local indices, we derived global indices that quantify the overall ranking of the input parameters. 
We then focused on the problem of multifidelity uncertainty propagation, selecting a low- and high-fidelity model pair.
Following \cite{ZGS24}, we used NeurAM to determine a shared space bridging the inputs of the two models, and to generate new low-fidelity input samples resulting in higher correlation between the low- and high-fidelity model response.
We remark that the main advantages with respect to \cite{ZGS24} are that we do not require a surrogate model to be computed beforehand, and that we replace normalizing flows with simple projections on the inverse cumulative distribution function, which benefits from increased robustness.
Furthermore, we provided a complete theoretical analysis of this approach under idealized conditions, where we prove that the use of NeurAM results in higher correlation.
In the multifidelity setting, an interesting direction would be to incorporate low-fidelity data into the training of the high-fidelity NeurAM, potentially by leveraging it within both the surrogate modeling component and the probability mapping.
Implementing such a strategy would require a redefinition of the NeurAM loss function to accommodate multifidelity data, as the current formulation is inherently single-fidelity. 
Moreover, developing such a multifidelity formulation would require a careful investigation of whether it is more computationally tractable to first train the low-fidelity component separately, possibly at the cost of reduced acceleration in high-fidelity training, or to integrate both fidelities within a unified architecture. While the latter is more challenging to train, it offers the greatest potential for high-fidelity data reduction.

We presented extensive numerical experimentation that showed the advantages of this nonlinear technique for dimensionality reduction, and corroborated the analysis. 
We first considered simple two-dimensional models, and then moved to more complex test cases, including the Hartmann problem, and a cardiac electrophysiology model that presents a bifurcation in the parameter space. 
NeurAM produces smaller errors compared to linear approaches such as AS~\cite{CDW14}, or more recent nonlinear approaches like AM~\cite{BGF19}.
On the other hand, our method has a slightly larger variability which is mainly caused by the fact that the solution to the minimization problem that determines the NeurAM is not unique. 
Therefore, even if the error is in most of the cases smaller than for traditional approaches in the literature, we are interested in looking for additional conditions to enforce, in order to guarantee the uniqueness of the NeurAM. 
Moreover, the theoretical argument in \cref{sec:formal_analysis} is valid under idealized conditions. An interesting future development consists in lifting this assumption, considering a more concrete scenario.
Finally, we treated the bifurcation in the cardiac electrophysiology model by first performing a classification, and then computing NeurAM separately for the two classes. In future work, we would like the algorithm to be able to autonomously perform domain decomposition, without the need of a pre-trained classifier. 
In this context, we might benefit from a two-dimensional manifold, and therefore we should first study an extension to NeurAM with multidimensional latent space, by drawing inspiration from the active subspaces method, where the eigenvalues of the gradient covariance matrix guide the selection of the effective dimension. 
Even if a one-dimensional latent space is already quite powerful, and it performed well for most of the problems we have tried, we believe that developing a procedure for automatic latent dimension selection represents an interesting direction for future research on the extension to the multidimensional setting. Moreover, regarding the application to multifidelity estimators, moving beyond a one-dimensional latent space introduces significant challenges for transforming probability distributions. In fact, the inverse transform sampling technique employed in our current framework is suitable for one-dimensional distributions. We note that this problem has already been successfully tackled in the context of linear dimension reduction strategies; see, e.g.,~\cite{ZGE23,ZGG23,ZGG25}. Since the limited availability of high-fidelity data is the main bottleneck in realistic applications, we believe that the development of multifidelity probability maps is a promising direction that could ultimately enable efficient transformations between probability distributions when the latent space is multidimensional.

\subsection*{Acknowledgements}

This work is supported by NSF CAREER award \#1942662 (DES), NSF CDS\&E award \#2104831 (DES), NSF award \#2105345 (ALM), and NIH grants \#R01EB029362 and \#R01HL167516 (ALM). 
This work used computational resources from the Stanford Research Computing Center (SRCC). 
Sandia National Laboratories is a multi-mission laboratory managed and operated by National Technology \& Engineering Solutions of Sandia, LLC (NTESS), a wholly owned subsidiary of Honeywell International Inc., for the U.S. Department of Energy’s National Nuclear Security Administration (DOE/NNSA) under contract DE-NA0003525. This written work is authored by an employee of NTESS. The employee, not NTESS, owns the right, title and interest in and to the written work and is responsible for its contents. Any subjective views or opinions that might be expressed in the written work do not necessarily represent the views of the U.S. Government. The publisher acknowledges that the U.S. Government retains a non-exclusive, paid-up, irrevocable, world-wide license to publish or reproduce the published form of this written work or allow others to do so, for U.S. Government purposes. The DOE will provide public access to results of federally sponsored research in accordance with the DOE Public Access Plan. G. Geraci was partially supported by a Laboratory Directed Research \& Development (LDRD) project.

\bibliographystyle{siamnodash}
\bibliography{biblio}

\end{document}